\declaretheorem[style=remark]{remark}
\declaretheorem[name=Theorem,within=section]{theorem}
\declaretheorem[name=Lemma,numberlike=theorem]{lemma}
\declaretheorem[name=Proposition,numberlike=theorem]{proposition}
\declaretheorem[name=Definition,numberlike=theorem]{definition}
\pgfplotsset{every axis/.append style={tick label style={/pgf/number format/fixed},font=\scriptsize,ylabel near ticks,xlabel near ticks,grid=major}}
\newcommand{\diff}[1]{\, \mathrm{d}#1}
\newcommand{\set}[1]{\left\{#1\right\}}
\newcommand{\expected}[2][]{
  \ifthenelse{\equal{#1}{}}
  {\mathbb{E}\!\left\{#2\right\}}
  {\operatorname{E}_{#1}\!\left\{#2\right\}}
}
\newcommand{\be}{\begin{equation}}
\newcommand{\ee}{\end{equation}}
\newcommand{\ben}{\begin{equation*}}
\newcommand{\een}{\end{equation*}}
\newcommand{\ba}{\begin{equation}\begin{aligned}}
\newcommand{\ea}{\end{aligned}\end{equation}}
\newcommand{\ban}{\begin{equation*}\begin{aligned}}
\newcommand{\ean}{\end{aligned}\end{equation*}}   
\newcommand{\bN}{\mathbb{N}}
\newcommand{\bR}{\mathbb{R}}
\newcommand{\ex}{\mathrm{e}}
\newcommand{\di}{\mathrm{d}}         
\renewcommand{\P}{\mathbf{P}}  
\newcommand {\e}{\varepsilon}
\newcommand{\rD}{\mathscr{D}}
\newcommand{\rF}{\mathscr{F}}
\DeclareMathOperator{\sign}{sign}
\DeclareMathOperator{\Law}{Law}
\let\oldmarginpar\marginpar
\renewcommand{\marginpar}[1]{\oldmarginpar{\scriptsize\texttt{\color{red}{#1}}}}  
\begin{document}

\title{Heterogeneous diffusion process with power-law nonlinearity}



\author{Jorge E.\ Cardona\thanks{\texttt{jorge@cardona.co}} \  and Ilya Pavlyukevich\thanks{Institute of Mathematics,
Friedrich Schiller University Jena, Inselplatz 5,
07743 Jena, Germany; \texttt{ilya.pavlyukevich@uni-jena.de}}
}

\maketitle

\begin{abstract}
In this paper, we study solutions of the heterogeneous diffusion process with power-law nonlinearity
governed by the stochastic differential equation
$\di X_t= |X_t|^\alpha\,\di B_t + \alpha\lambda |X_t|^{2\alpha-1}\sign(X_t)\,\di t$, where
$\alpha\in (0,1)$ and
$\lambda\in[0,1]$. The parameter $\alpha$ controls the nonlinear power-law profile of
the diffusion coefficient, while the parameter $\lambda$ specifies the
interpretation of the stochastic integral in the pre-equation $\dot X=|X|^\alpha\dot B$. We demonstrate that the solutions
of this equation can be represented as nonlinear transformations of a skew
Bessel process with dimension $\delta \in \mathbb{R}$.

\end{abstract}

\noindent
\textbf{Keywords:} Bessel process; skew Bessel process; It\^o stochastic integral;
Stratonovich stochastic integral; Hänggi--Klimontovich interpretation; random time change.

\smallskip

\noindent
\textbf{2010 Mathematics Subject Classification:}
65C30$^*$ Stochastic differential and integral equations;
60H10 Stochastic ordinary differential equations;

\numberwithin{equation}{section}


\section{Introduction}

In the physical literature, a Brownian motion process $B$ is often referred to as \emph{free diffusion}, while the
solution of the SDE
\ba
\label{e:hd-phys}
\dot X&=\sigma(X)\dot B
\ea
is called a \emph{heterogeneous diffusion}, with $\sigma$ representing the diffusion coefficient.

From a mathematical perspective, this corresponds to a one-dimensional SDE with multiplicative noise.
However, from the physical point of view, equation \eqref{e:hd-phys}
does not completely specify the process $X$. Instead, it serves as a ``pre-equation''
that must be complemented by an interpretation rule in order to define an actual stochastic process
(see \cite{vanKampen81}).

The interpretation rule determines the choice of stochastic integral.

In the classical It\^o framework, the SDE \eqref{e:hd-phys} is interpreted as the stochastic integral equation
\ba
\label{e:Ito}
X_t = x + \int_0^t \sigma(X_s) \,\di B_s
&:= x+ \lim_{n\to\infty}
\sum_{k} \sigma(X_{t_{k}^n\wedge t})
(B_{t_{k+1}^n\wedge t}-B_{t_{k}^n\wedge t})\\
\ea
where $\{t^n_k,k\in\bN_0\}_{n\in\bN}$ denotes a sequence of partitions of the half-line $[0,\infty)$ with
mesh size tending uniformly to zero, i.e.,
$\lim_{n\to\infty}\max_k|t^n_{k+1}-t^n_k|=0$,
The limit in \eqref{e:Ito} is taken in the ucp (uniform convergence on compacts in probability) sense,
see, for example,  Chapter II in \cite{Protter-04}.
It is the characteristic feature of the It\^o stochastic integral, that the integrand $\sigma(X)$
is always evaluated at the left endpoint of each partition interval.

Different interpretations of multiplicative noise can be introduced by means of an
\emph{interpretation parameter} $\lambda \in [0,1]$.

The corresponding $\lambda$-stochastic integral is defined as the ucp-limit
\ba
\int_0^t \sigma(X_s) \circ_\lambda \di B_s
&:= \lim_{n\to\infty}
\sum_{k} (\lambda \sigma(X_{t_{k+1}^n\wedge t})
-(1-\lambda) \sigma(X_{t_{k}^n\wedge t}))
(B_{t_{k+1}^n\wedge t}-B_{t_{k}^n\wedge t}),
\ea
which can be rewritten as
\ba
\int_0^t \sigma(X_s) \circ_\lambda \di B_s
&= \int_0^t \sigma(X_s)\,\di B_s + \lambda [\sigma(X),B]_t,
\ea
where $[\sigma(X),B]$ denotes the bracket process
\ba
{}[\sigma(X_t),B]_t
:=\lim_{n\to\infty} \sum_{k} (\sigma(X_{t_{k+1}^n\wedge t})-\sigma(X_{t_{k}^n\wedge t}))(B_{t_{k+1}^n\wedge t}-B_{t_{k}^n\wedge t}),
\ea
provided, the limit exists.
If $\sigma$ is sufficiently smooth, for instance, $\sigma\in C^2$, the bracket process can be expressed
explicitly as a Lebesgue integral,
\ba
\label{e:sigmabracket}
{} [\sigma(X),B]_t=\int_0^t \sigma(X_s)\sigma'(X_s)\,\di s.
\ea
In this case, the interpretation parameter $\lambda$ contributes
an additional \emph{noise-induced drift} term $\lambda \sigma(X)\sigma'(X)$,
so that the $\lambda$-interpretation of \eqref{e:hd-phys} coincides with the It\^o SDE
\ba
\label{e:Leb}
X_t=x+ \int_0^t \sigma(X_s)\,\di B_s+\lambda \int_0^t \sigma(X_s)\sigma'(X_s)\,\di s.
\ea
The best-known special case is the Stratonovich interpretation with $\lambda = \frac{1}{2}$; see Section V.5 of \cite{Protter-04}.
Another important case is $\lambda = 1$, referred to in the physics literature as
the Hänggi--Klimontovich or kinetic interpretation; see \cite{Sokolov-10}.

This paper focuses on a particular heterogeneous diffusion with the irregular diffusion coefficient
$\sigma(x)=|x|^\alpha$, $\alpha\in (0,1)$. Such a diffusion was first considered in the It\^o setting
by \cite{Girsanov-62} as an example of an SDE without the uniqueness property. More recently, it was
examined in \cite{CherstvyCM-13} in the context of the Stratonovich SDE
\ba
\label{e:stra_br}
\dot X=|X|^\alpha\circ\, \dot B,
\ea
corresponding to the interpretation parameter $\lambda = \tfrac{1}{2}$.
In that work, the authors approached the problem at a formal, physical level
of rigour and derived the probability density function for a symmetric solution.
Further, in Chapter 3.3 of \cite{Heidernaetsch-15} and in \cite{kazakevivcius2016influence},
explicit densities of non-negative
solutions for general $\lambda$-interpretations were presented.

The Stratonovich heterogeneous diffusion \eqref{e:stra_br} was studied rigorously
using stochastic calculus in \cite{PavShe-20} and \cite{PavShe2025} for $\alpha \in (-1,1)$.
It was shown that the equation is underdetermined and admits infinitely
many strong solutions. Restricting attention to the class of homogeneous strong
Markov solutions that spend zero time at $0$, one obtains solutions
as nonlinear transformations of a skew Brownian motion.
Specifically, for $\alpha \in (0,1)$ and any $\theta \in [-1,1]$, the process
\ba
\label{e:Xstrat}
X_t= |(1-\alpha) Z^\theta_t|^{\frac{1}{1-\alpha}}\sign(Z^\theta_t)
\ea
is a strong solution to \eqref{e:stra_br}
where $Z^\theta$ is the unique strong solution to the SDE
\ba
Z^\theta_t = z + B_t + \theta L^0_t(Z^\theta),\quad z=  \frac{1}{1-\alpha} |x|^{1-\alpha}\sign(x),
\ea
where $L^0$ denotes a symmetric semimartingale local time at $0$.
For $\alpha \in (-1,0]$, only the symmetric solution with $\theta = 0$
is possible (see Theorem 4.5 in \cite{PavShe-20}). Moreover, \cite{PavShe2025} established that the
symmetric solution with $\theta = 0$, first identified in \cite{CherstvyCM-13},
is physically relevant in the sense that it arises as the
unique solution of a stochastic selection problem.

For $\alpha \in (-1,0]$, only the symmetric solution with $\theta = 0$ is possible (see Theorem 4.5 in
\cite{PavShe-20}). Moreover, \cite{PavShe2025} established that the symmetric solution with $\theta = 0$,
first identified in \cite{CherstvyCM-13}, is physically relevant in the sense that it arises as the
unique solution of a stochastic selection problem.
\ba
\label{eq:main-simple}
X_t=x+ \int_0^t |X_s|^\alpha \,\di B_s + \alpha\lambda \int_0^t  |X_s|^{2\alpha-1}\operatorname{sign}(X_s) \,\di s.
\ea
which is the formal analogue of \eqref{e:Leb} with $\sigma(x)=|x|^\alpha$.
It should be emphasized that while the identity \eqref{e:sigmabracket} holds for smooth $\sigma$, the corresponding relation
\ba
{}[|X|^{\alpha},B]_t=
\alpha\int_0^t  |X|^{2\alpha-1}\operatorname{sign}(X_s) \,\di s
\ea
remains an open question in this singular setting.
Partial progress in this direction has been obtained in \cite{PavShe-20} and \cite{PavShe2025}.

It is clear that for any initial value $x\neq 0$, the SDE \eqref{eq:main-simple} admits a
unique local solution. Since both the diffusion coefficient and the drift grow sublinearly at infinity,
this local solution does not blow up and can be uniquely extended up to the first hitting time of the origin,
\ba
\tau_0=\begin{cases}
        \inf\{t\in [0,\infty)\colon X_t=0\},\\
        +\infty,\quad X_t\neq 0,\quad t\in[0,\infty).
       \end{cases}
\ea

Our analysis of the SDE \eqref{eq:main-simple} is based on a nonlinear transformation of the solution
X
X and its reduction to a Bessel process. Such transformations have been employed for heterogeneous diffusions in \cite[Chapter 3.3]{Heidernaetsch-15} and \cite{carr2006jump} in the context of financial mathematics.

The intuition is as follows. Consider the nonlinear, one-to-one mapping
\ba
H_\alpha(x)
= \frac{1}{1-\alpha}|x|^{1-\alpha} \operatorname{sign}(x),\quad x\in\bR,
\ea
with the inverse
\ba
H^{-1}_\alpha(z) = ((1- \alpha) |z|)^{\frac{1}{1-\alpha}} \operatorname{sign}(z),\quad z\in\bR,
\ea
which appeared earlier in \eqref{e:Xstrat}. Applying It\^o's formula to the local solution $X$
of \eqref{eq:main-simple} with initial value
$x\neq 0$, we find that the process
$Z:=H_\alpha(X)$ satisfies
\ba
Z_t&=H_\alpha(x) +\frac{\delta-1}{2}\int_0^t \frac{\di s}{Z_s} + B_t,\quad t\in[0,\tau_0),\\
\delta& =\delta_{\alpha,\lambda}= \frac{1 - 2\alpha (1 - \lambda)}{1 - \alpha}\in\bR.
\ea
For $x\in (0,\infty)$,
the process $Z$ is a well-known $\delta$-dimensional Bessel process.
If the initial point is negative, then the process $\widetilde Z=-H_\alpha(X)$ is a Bessel process satisfying
\ba
\widetilde Z_t=|H_\alpha(x)| +\frac{\delta-1}{2}\int_0^t \frac{\di s}{\widetilde Z_t} + \widetilde B_t,
\quad t\in[0,\tau_0),
\ea
with $\widetilde B=-B$.
Since we consider solutions on the time interval
$[0,\tau_0)$, both $X$ and $Z$ are well-defined and unique up to the first hitting time of the origin.

Summarizing this intuition, we see that the solution
$X$ of \eqref{eq:main-simple}, if it exists, behaves like a nonlinear transformation of a (possibly negative)
Bessel process away from
zero. Consequently, the key question is to describe the possible behaviors of $Z$
upon hitting zero.

In this paper, we focus on solutions that spend zero time at zero and are strong Markov
processes, assuming that a continuation beyond the first hitting of zero is possible.

\begin{definition}\label{def:weak-sln-simple}
A weak solution to \eqref{eq:main-simple} is a pair
$(X, B)$ of adapted continuous processes on
a stochastic basis $(\Omega, \rF, \mathbb{F},\P)$
such that
  \begin{enumerate}
  \item $B$ is a standard Brownian motion on $(\Omega, \rF, \mathbb{F}, \P)$;
  \item for any $t\in[0,\infty)$, the integrals $\int_0^t |X_s|^\alpha\, \di B_s$
  and $\int_0^t |X_s|^{2\alpha - 1}\operatorname{sign}(X_s)\,  \di s$ exist,
  \item for any $t \in[0,\infty)$, \eqref{eq:main-simple} holds $\P$-a.s.
  \end{enumerate}
We say that $X$ spends zero time at $0$ if
\begin{equation}
\int_0^\infty \mathbb I(X_s=0)\,\di s=0 \quad \P\text{-a.s.}
\end{equation}
\end{definition}

Let us briefly summarize the
results. Denote by $\mathrm{BES}^\delta(z)$ a Bessel process of dimension $\delta$ starting at $z$.

It is well known that for $\delta\in[2,\infty)$ and $z\neq 0$, $\pm\mathrm{BES}^\delta(z)$ never hits zero, while
$\pm\mathrm{BES}^\delta(0)$ leaves zero immediately and never returns.

For $\delta\in(0,2)$, $\mathrm{BES}^\delta(z)$ visits $0$ infinitely often,
allowing the process to potentially change sign. In this regime, we will employ \emph{skew Bessel processes} to construct solutions.

Roughly speaking, a skew Bessel process with the skewness parameter $\theta\in[-1,1]$ is a strong Markov
process that behaves like a standard Bessel process on the positive half-line, like a negative Bessel process
on the negative half-line and a chooses the positive direction with $\frac{\theta\pm 1}{2}$
upon hitting zero.

A prominent example of this is the \emph{skew
Brownian motion} (see \cite{Lejay-06}), which corresponds to the skew Bessel process of dimension $\delta=1$.

For $\delta\in (-\infty ,0]$, $\mathrm{BES}^\delta(z)$
hits zero with probability one and remains there indefinitely.

Thus, the dimension parameter $\delta_{\alpha,\lambda}$ completely determines the dynamics of the solution at the origin.
Fig.~\ref{f:delta} illustrates the domains of different values of $\delta_{\alpha,\lambda}$
as a function of $\alpha$ and $\lambda$.

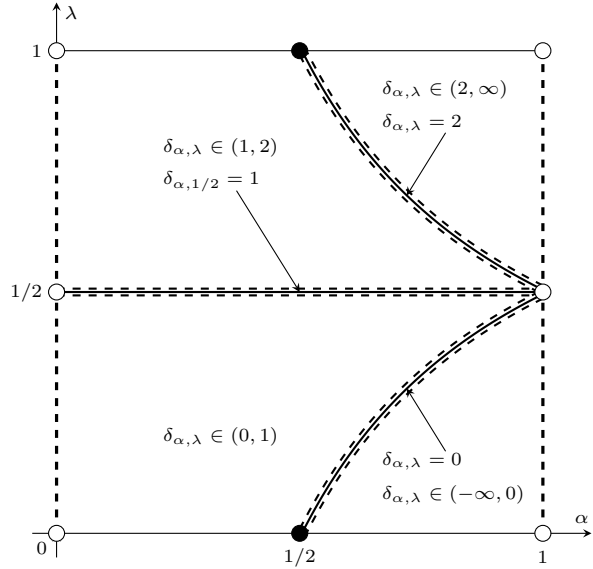
\begin{figure}[!h]
  \centering
  \begin{tikzpicture}
    \begin{axis}[
      width=0.6\textwidth,height=0.6\textwidth, hide axis,
      xmin=-0.1,
      xmax=1.2,
      ymin=-0.1,
      ymax=1.2,
      xlabel={$\alpha$},
      ylabel={$\lambda$},
      xtick = {0, 0.5, 1},
      xticklabels={$0$, $1/2$, $1$},
      ytick = {0, 0.5, 1},
      yticklabels={$0$, $1/2$, $1$},
      ytick distance=0.5,
      ylabel style={rotate=-90}
      ]
      \plot[name path=A, thick,samples=2,domain=0:1,dashed] {0.5+0.007};
      \plot[name path=A, thick,samples=2,domain=0:1,dashed] {0.5-0.007};


      \plot[name path=B, thick,samples=100,domain=0.5:1,dashed] {0.003+ 1 - 1 / (2*(x+0.003))};
      \plot[name path=B, thick,samples=100,domain=0.5:1] { -0.003+ 1 - 1 / (2*(x-.003))};
      \plot[name path=B, thick,samples=100,domain=0.51:1,dashed] {-0.009+ 1 - 1 / (2*(x-0.009))};

      \plot[name path=C, thick,samples=2,domain=0:1] {0.5};
      \plot[name path=B, thick,samples=100,domain=0.5:1,dashed] {-0.003 + 1 / (2*(x+0.003))};
      \plot[name path=B, thick,samples=100,domain=0.5:1] { 0.003+ 1 / (2*(x-.003))};
      \plot[name path=B, thick,samples=100,domain=0.51:1,dashed] {0.009 + 1 / (2*(x-0.009))};

      \plot[name path=E,  samples=2,domain=0:1] {1};
      \plot[very thick, dashed, samples=50,domain=0:1] coordinates {(0,0)(0,1)};
      \plot[very thick, dashed, samples=50,domain=0:1] coordinates {(1,0)(1,1)};

      \addplot[only marks,mark=*,mark options={scale=1.5, fill=white}] coordinates { (0,0)} ;
      \addplot[only marks,mark=*,mark options={scale=1.5, fill=white}] coordinates { (1,0)} ;
      \addplot[only marks,mark=*,mark options={scale=1.5, fill=black}] coordinates { (.5,0)} ;
      \addplot[only marks,mark=*,mark options={scale=1.5, fill=black}] coordinates { (.5,1)} ;
      \addplot[only marks,mark=*,mark options={scale=1.5, fill=white}] coordinates { (1,0.5)} ;
      \addplot[only marks,mark=*,mark options={scale=1.5, fill=white}] coordinates { (0,0.5)} ;
      \addplot[only marks,mark=*,mark options={scale=1.5, fill=white}] coordinates { (0,1)} ;
      \addplot[only marks,mark=*,mark options={scale=1.5, fill=white}] coordinates { (1,1)} ;


      \node[anchor=west] at (axis cs: 0.65,0.08) {$\delta_{\alpha,\lambda} \in(-\infty,0)$};
      \node[anchor=west] at (axis cs: 0.65,0.15) {$\delta_{\alpha,\lambda} =0$};

      \node[anchor=west ] at (axis cs: 0.2,0.2) {$\delta_{\alpha,\lambda}  \in (0, 1)$};
      \node[anchor=west ] at (axis cs: 0.2,0.8) {$\delta_{\alpha,\lambda}  \in (1, 2)$};
      \node[anchor=west ] at (axis cs: 0.2,0.73) {$\delta_{\alpha,1/2}  =1$};

      \node[anchor=west] at (axis cs: 0.65,0.92) {$\delta_{\alpha,\lambda} \in(2,\infty)$};
      \node[anchor=west] at (axis cs: 0.65,0.85) {$\delta_{\alpha,\lambda} =2$};

      \draw[->, >=stealth] (axis cs:-0.05,0) -- (axis cs:1.1,0);
      \draw[->, >=stealth] (axis cs:0,1) -- (axis cs:0,1.1);
      \draw[->, >=stealth] (axis cs:0,-0.05) -- (axis cs:0,0);

      \node at (axis cs: 1.08,.03) {$\alpha$};
      \node at (axis cs: 0.03,1.08) {$\lambda$};
      \node at (axis cs: 1,-0.05) {$1$};
      \node at (axis cs: 0.5,-0.05) {$1/2$};
      \node[anchor=east] at (axis cs: -0.0,-0.03) {$0$}; 
      \node[anchor=east] at (axis cs: -0.01,1) {$1$};     
      \node[anchor=east] at (axis cs: -0.01,0.5) {$1/2$}; 

      \draw[->, >=stealth] (axis cs:0.8,0.83) -- (axis cs:0.72,0.7);
      \draw[->, >=stealth] (axis cs:0.8,0.17) -- (axis cs:0.72,0.3);
      \draw[->, >=stealth] (axis cs:0.37,0.71) -- (axis cs:0.5,0.5);
    \end{axis}
  \end{tikzpicture}
\caption{Dimensions of the (skew) Bessel process as a function of the heterogeneity
index $\alpha\in(0,1)$ and the interpretation parameter $\lambda\in[0,1]$.\label{f:delta}}
\end{figure}

The main result is presented in the following Theorem.

\begin{theorem}
\label{thm:main_bessel}
Let $\alpha\in(0,1)$, $\lambda\in [0,1]$ and let
\ba
\delta:=\delta_{\alpha,\lambda}=\frac{1 - 2\alpha (1 - \lambda)}{1 - \alpha}\in\bR.
\ea
Furthermore, for $\theta\in[-1,1]$ assume that $Z^{\delta,\theta}(z)$ is a skew Bessel process
of dimension $\delta$ with skewness parameter $\theta$ started at $z\in\bR$.
Then for any $x\in\mathbb R$, the process
\ba
\label{e:X}
X_t = H^{-1}_\alpha(Z_t^{\delta,\theta}(H_\alpha(x))),\quad t\in [0,\infty),
\ea
is a weak solution of \eqref{eq:main-simple} starting at $x$.

For $\delta\in(0,\infty)$, this solution spends zero time at $0$,
For $\delta\in (-\infty,0]$, it
gets trapped at $0$ upon hitting it.

For $\delta\in (0,\infty)$, any weak solution of \eqref{eq:main-simple} starting at $x$,
that is a strong Markov process spending zero time at $0$,
has the law of the process $X$ defined in \eqref{e:X}.
For $\delta\in (-\infty,0]$, any weak solution of \eqref{eq:main-simple} starting at $x$
coincides in law with $X$.
 \end{theorem}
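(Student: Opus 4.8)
The plan is to prove the existence and the uniqueness-in-law assertions separately, in both cases using that the deterministic homeomorphism $H_\alpha$ of $\bR$ (fixing $0$) intertwines the dynamics of $X$ with those of a skew Bessel process, exactly as foreshadowed by the It\^o computation preceding the theorem. For existence, I would realise a skew Bessel process $Z=Z^{\delta,\theta}(H_\alpha(x))$ on some stochastic basis and set $X:=H^{-1}_\alpha(Z)$. Since $H^{-1}_\alpha$ is a homeomorphism with $H^{-1}_\alpha(0)=0$, the zero sets of $X$ and $Z$ coincide, so the stated behaviour at the origin transfers directly from the known behaviour of $\mathrm{BES}^\delta$: for $\delta\in(0,\infty)$ the occupation time of $\{0\}$ vanishes, giving $\int_0^\infty\mathbb I(X_s=0)\,\di s=0$, while for $\delta\in(-\infty,0]$ the absorption of $Z$ at $0$ becomes trapping of $X$.

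On each open excursion interval of $Z$ away from $0$ the map $H^{-1}_\alpha$ is $C^\infty$, so It\^o's formula applies. A direct computation gives $(H^{-1}_\alpha)'(Z)=|X|^\alpha$ for the diffusion coefficient, while the drift splits as $\tfrac{(\delta-1)(1-\alpha)}{2}|X|^{2\alpha-1}\sign(X)+\tfrac{\alpha}{2}|X|^{2\alpha-1}\sign(X)$, the first piece coming from $(H^{-1}_\alpha)'(Z)\cdot\tfrac{\delta-1}{2Z}$ and the second being precisely the second-order It\^o correction $\tfrac12 (H^{-1}_\alpha)''(Z)$; substituting $\delta=\tfrac{1-2\alpha(1-\lambda)}{1-\alpha}$ collapses this to $\alpha\lambda|X|^{2\alpha-1}\sign(X)$. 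Thus on excursions $X$ is a continuous semimartingale with $\di\langle X\rangle_s=|X_s|^{2\alpha}\,\di s$ and finite-variation part $\alpha\lambda\int_0^{\cdot}|X_s|^{2\alpha-1}\sign(X_s)\,\di s$. To globalise and produce the driving noise I would set $B_t:=\int_0^t|X_s|^{-\alpha}\mathbb I(X_s\neq0)\,\di M_s$, with $M$ the local-martingale part of $X$; since $X$ spends zero time at $0$, $\langle B\rangle_t=\int_0^t\mathbb I(X_s\neq0)\,\di s=t$, so $B$ is a Brownian motion by L\'evy's characterisation, $\int_0^t|X_s|^\alpha\,\di B_s=M_t$, and \eqref{eq:main-simple} together with Definition~\ref{def:weak-sln-simple} follows.

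The genuine obstacle in this direction is to show that $X$ is a semimartingale whose finite-variation part has \emph{no singular component at the origin}, i.e.\ that no local-time term at $0$ survives. This is delicate exactly in the regime $\delta\in(0,2)$, where $0$ is visited infinitely often and, for $\delta\in(0,1)$, $Z$ itself is not a semimartingale. I would exploit that $H^{-1}_\alpha$ vanishes to order $\tfrac{1}{1-\alpha}>1$ at $0$, so that $(H^{-1}_\alpha)'(0)=0$ and $(H^{-1}_\alpha)''\in L^1_{\mathrm{loc}}$ (the local exponent $(2\alpha-1)/(1-\alpha)$ exceeding $-1$), to argue that the near-zero contributions are negligible. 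Concretely this reduces to the integrability of the signed drift $\int_0^t|X_s|^{2\alpha-1}\sign(X_s)\,\di s$, equivalently of $\int_0^t R_s^{-q}\,\di s$ for the Bessel radius $R=|Z|$ with $q=(1-2\alpha)/(1-\alpha)$; one checks $\delta-q=\tfrac{2\alpha\lambda}{1-\alpha}\ge0$, so the integral converges for $\lambda>0$, while the borderline case $\lambda=0$ (no drift, so harmless in the equation) must be understood as a principal value exploiting the cancellation between positive and negative excursions. I would carry this out by stopping $X$ on leaving $\varepsilon$-neighbourhoods of $0$, bounding the near-zero bracket and drift increments uniformly, and passing to the limit $\varepsilon\downarrow0$.

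For uniqueness, let $(X,B)$ be a weak solution that is strong Markov and spends zero time at $0$, and put $Y:=H_\alpha(X)$, again strong Markov. On $\{X\neq0\}$, where $H_\alpha$ is $C^\infty$, It\^o's formula shows that $Y$ solves the Bessel equation $\di Y_t=\tfrac{\delta-1}{2Y_t}\,\di t+\di W_t$ for a Brownian motion $W$ obtained from the martingale part of $Y$ via L\'evy's theorem, so $|Y|$ is a $\mathrm{BES}^\delta$ off the origin. For $\delta\in[2,\infty)$ the origin is polar from $x\neq0$, so $Y$ never changes sign and its law is fixed; for $\delta\in(-\infty,0]$ the origin is reached a.s.\ and absorbing, again determining the law. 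The only freedom arises for $\delta\in(0,2)$, where $0$ is regular and recurrent: there the strong Markov property together with the vanishing occupation time at $0$ forces $Y$ to reattach the excursions of $|Y|$ with fixed sign probabilities, i.e.\ to be a skew Bessel process for some $\theta\in[-1,1]$. The main obstacle here is the excursion-theoretic classification of admissible boundary behaviours at $0$, ruling out sticky or otherwise exotic strong Markov extensions (which would either spend positive time at $0$ or fail the Markov property); once this is settled, $X=H^{-1}_\alpha(Y)$ has the law of \eqref{e:X} with the corresponding $\theta$, completing the proof.
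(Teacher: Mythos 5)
Your overall strategy (push a skew Bessel process through $H^{-1}_\alpha$ for existence, pull a solution back through $H_\alpha$ for uniqueness) is the same as the paper's, but at both crucial junctures you name the difficulty and defer it rather than resolve it, and the tools that actually close these gaps are absent from your plan. For existence, the step you call "the genuine obstacle" — that $X=H^{-1}_\alpha(Z)$ is a semimartingale with no singular (local-time) component at the origin — is the whole theorem, and your proposed route (excursion-wise It\^o plus $\e$-stopping and a limit) is not workable as stated: for $\delta\in(0,2)$ the origin is regular, so any finite time interval contains infinitely many excursions and there is no a priori control of the accumulated boundary terms, while for $\delta\in(0,1)$ the process $Z$ is not even a semimartingale, so there is no global It\^o formula for $Z$ to anchor the gluing. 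The paper's resolution, which your proposal misses, is to avoid $Z$ altogether: by Theorem \ref{t:equiv-skew-bessel} one can realise $Z^{\delta,\theta}_t=R_{\delta,\theta}(\beta_{r_t})$ as a time-changed Brownian motion, and then apply Krylov's generalized It\^o formula to the \emph{single composite} map $f=H^{-1}_\alpha\circ R_{\delta,\theta}$ evaluated along $\beta$. This is legitimate because $f(x)\propto|x|^{1/((2-\delta)(1-\alpha))}$ with $\frac{1}{(2-\delta)(1-\alpha)}=\frac{1}{1-2\alpha\lambda}>1$, so $f'$ is absolutely continuous; no local-time term can appear, and undoing the time change produces exactly \eqref{eq:main-simple} with the Brownian motion $B_t=\int_0^{r_t}R'_{\delta,\theta}(\beta_s)\,\di\beta_s$ (Lemma \ref{lemma:skew-bessel-to-1}). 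Incidentally, your "principal value for $\lambda=0$" worry is a red herring: when $\lambda=0$ the drift coefficient $\alpha\lambda$ vanishes identically, so there is no drift integral to interpret; the only issue is the potential singular part at $0$, which the composition argument kills in all cases.

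For uniqueness the same pattern recurs: you reduce everything to "the excursion-theoretic classification of admissible boundary behaviours at $0$", acknowledge it as the main obstacle, and stop there — but that classification is precisely what has to be proved. The paper replaces it with two concrete pieces of machinery that your sketch lacks. First, the martingale characterizations of the Bessel process (Theorem \ref{t:bessel-mart-char}) and of the skew Bessel process (Theorem \ref{t:sbessel-mart-char}), proved by resolvent identities and an approximation argument in the style of Varadhan; these say that a continuous (strong Markov) process is a (skew) Bessel process as soon as it starts correctly, solves the martingale problem for test functions compactly supported away from $0$, spends zero time at $0$, and — in the skew case — has $S_{\delta,\theta}$ as a scale function. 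Second, Lemma \ref{prop:from-bessel-to-skew-bessel}, which supplies that last condition: an elementary exit-probability computation shows that any continuous strong Markov $Y$ with $|Y|$ a $\mathrm{BES}^\delta$, $\delta\in(0,2)$, admits some $\theta\in[-1,1]$ for which $S_{\delta,\theta}(Y)$ is a local martingale. With these, uniqueness is a short verification (It\^o's formula applied to $f\circ H_\alpha$ for $f\in C^2_c$ supported away from $0$, as in Lemma \ref{lemma:from-positive-1-to-bessel}); without them, your assertion that the strong Markov property and zero occupation time "force" fixed sign probabilities is the conclusion restated, not an argument. A minor further omission: for $\delta\in[2,\infty)$ and $x=0$ the solution's sign must be randomized, which also requires a (short) argument producing the parameter $\theta$.
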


\begin{remark}
For $\delta \in[2,\infty)$ and $x \neq 0$, the solutions are strong; they are also strong if $z=0$ and
$\theta=\pm 1$.
For $\delta \in (1, 2)$ and $\theta\in\{-1,0,1\}$, the solutons are also strong, see \cite[Remark 2.30]{Blei12}.
For $\delta=1$, $X$ is a strong solution, see \cite{PavShe-20}.
For $\delta \in(-\infty,0]$, the solutions are strong, too.
\end{remark}

Our results are consistent with the theory of singular SDEs developed by \cite{CheEng05}.
In particular, the qualitative behavior of solutions to the SDE \eqref{eq:main-simple}, especially their behaviour at zero,
is described in Theorem 5.1 of Chapter 5 in that work.

The paper is organized as follows. In Section \ref{sec:bessel-processes} we introduce Bessel processes of
dimension $\delta\in (0,\infty)$
and provide their martingale characterization. Bessel processes of
dimension $\delta\in (-\infty,0]$ are introduced via the square Bessel process.
In Section \ref{s:skewBessel} we give a martingale characterization of Bessel processes
of dimension $\delta\in (0,2)$ and discuss trivial cases arising for other dimensions.
Section \ref{s:time} explains how a (skew) Bessel process can be obtained as a nonlinear
transformation of a time-changed Brownian motion. In Section \ref{sec:sdes}, we consider (skew)
Bessel processes as solutions of SDEs. Finally, Section \ref{s:proofs} contains
the proof of the main result, Theorem \ref{thm:main_bessel}.

\medskip

\noindent
\textbf{Acknowledgments:}
The authors thank the German Research Council (grant Nr.\ PA 2123/6-1) for financial support. The authors are grateful
to A.\ Pilipenko for stimulating discussions.

\section{Bessel processes\label{sec:bessel-processes}}

In this section, we review the fundamental properties of Bessel processes,
with a focus on their infinitesimal generators and martingale characterizations.

By $C_b(\bar\bR_+,\bR)$ we denote the space of real valued continuous bounded functions defined on $\bar\bR_+=[0,+\infty]$,
i.e., $f\in C_b(\bar\bR_+,\bR)$ is
bounded and continuous on $(0,+\infty)$ and the limits
\ba
f(0+)&:=\lim_{x\downarrow 0} f(x),\\
f(+\infty)&:=\lim_{x \uparrow +\infty} f(x),\\
\ea
exist and are finite.
Equipped with the supremum norm, the space $C_b(\bar\bR_+,\bR)$ is a Banach space.

A Bessel process of dimension $\delta\in(0,\infty)$
is the conservative diffusion process $Z^\delta$ with values in $\bR_+=[0,\infty)$ whose generator is
\ba
\label{eq:bessel_op}
L^\delta f(x) = \frac12 f''(x) + \frac{\delta - 1}{2x} f'(x)
\ea
on the domain
\ba
\label{e:Ddelta}
\rD_\delta=\{f\in C_b(\bar\bR_+,\bR)\cap C^2((0,\infty),\bR)\colon\
\lim_{x \downarrow 0} x^{\delta-1} f'(x)=0\ \text{and}\
L^\delta f\in C_b(\bar\bR_+,\bR)\}.
\ea
For the Bessel $Z^\delta$ starting at $z\in[0,\infty)$,
we will use notation $\mathrm{BES}^\delta(z)$.

The description of the domain of the operator $L^\delta$
follows from the general theory of Markov processes as presented in \cite[Chapter 16,\S 3]{Dynkin65} or
\cite[Chapter II]{mandl1968analytical}.

Indeed, following \cite{RevuzYor05}, p.\ 446, for $\delta\in (0,\infty)$, let us introduce a strictly increasing
continuous function $S_\delta\colon (0,\infty)\to\mathbb R$ (the scale function),
\ba
\label{e:Sdelta}
S(x)=S_\delta(x)=\begin{cases}
             x^{2-\delta},&\delta\in (0,2),\\
             2\ln x, & \delta=2,\\
            -x^{2-\delta}, & \delta\in (2,\infty),
            \end{cases}
\ea
and a positive continuous function $m_\delta \colon (0,\infty)\to \mathbb R_+$ (the speed measure density)
\ba
m_\delta(x) = \begin{cases} \frac{2}{2 - \delta} x^{\delta - 1}, &  \delta\in (0,2) ,\\
x, &  \delta = 2 \\
\frac{2}{\delta - 2} x^{\delta - 1}, &   \delta\in(2,\infty),
\end{cases}
\ea
and let
\ba
M(x)=M_\delta(x):=\int_0^x m_\delta(y)\,\di y.
\ea
Then for sufficiently smooth functions $f$, the operator \eqref{eq:bessel_op} can be written as
\ba
L^\delta f= \frac12 D_MD_S^+f ,
\ea
where $D_M$ and $D_S^+$ denote the (right-hand-side) derivatives with respect to the functions $M$ and $S$, i.e.\
\ba
D_S^+f(x)&=\lim_{y\downarrow x} \frac{f(y)-f(x)}{S_\delta(y)-S_\delta(x)}
=\begin{cases}
\frac{1}{2-\delta}x^{\delta-1}f'(x),\quad \delta\in (0,2),\\
\frac{x}{2}f'(x),\quad \delta=2,\\
\frac{1}{\delta-2}x^{\delta-1}f'(x),\quad \delta\in (2,\infty),\\
 \end{cases}\\
D_M f(x)&=\lim_{y\to x} \frac{f(y)-f(x)}{M_\delta(y)-M_\delta(x)}
=\begin{cases}
\frac{2-\delta}{2x^{\delta-1}}f'(x),\quad \delta\in (0,2) ,\\
\frac{1}{x}f'(x) ,\quad \delta=2,\\
\frac{\delta-2}{2x^{\delta-1}}f'(x),\quad \delta\in (2,\infty).
 \end{cases}
\ea
The domain of $D_MD_S^+f$ consists of functions
$f\in C_b(\bar\bR_+,\bR)$ such that
$f$ is continuously differentiable with respect to $S_\delta$ and
$D_S f$ is continuously differentiable with respect to $M$, and $D_MD_S^+f\in C_b(\bar\bR_+,\bR)$,
which leads to the definition \eqref{e:Ddelta}, see Section 4 in \cite{Blei12} for details.

Note that for $\delta\in[2,\infty)$, the condition $\lim_{x \downarrow 0} x^{\delta-1} f'(x)=0$ can be omitted.

Alternatively, we can write $L^\delta$ as
\ba
L^\delta f(x)=\frac12 \frac{1}{w_\delta(x)}\frac{\di}{\di x}(w_\delta(x)f'(x))
\ea
with
\ba
w_\delta(x)=x^{\delta-1}.
\ea
The operator $(L^\delta,\rD_\delta)$ defines the
a Feller transition function with the following density.

\begin{proposition}[A.2 in \cite{GJYor03}, p.\ 446 in \cite{RevuzYor05}]
  The transition probability of a Bessel process of dimension $\delta \in(0,\infty)$ is
  \ba
    \label{eq:trans-prob-bessel}
    p^\delta (t, x,y) &= t^{-1}  x^{-\nu} y^{\nu + 1} \ex^{-\frac{x^2 + y^2}{2t}} I_\nu(xy/t),\\
    p^\delta (t, 0, y) &= 2^{-\nu} t^{-\nu-1} \Gamma(\nu+1)^{-1} y^{2\nu + 1}\ex^{-\frac{y^2}{2t}},\quad \nu=\frac{\delta}{2}-1,
  \ea
where $I_\nu$ is the modified Bessel function of the first kind,
$I_\nu(z)=(z/2)^\nu\sum_{k=0}^\infty\frac{(z^2/4)^k}{k!\Gamma(\nu+k+1)}$.
\end{proposition}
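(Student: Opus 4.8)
The plan is to reduce the statement to the transition density of the \emph{squared} Bessel process and then invert a Laplace transform. First I pass to $\rho_t:=(Z^\delta_t)^2$; by It\^o's formula applied to the Bessel SDE associated with $(L^\delta,\rD_\delta)$, the process $\rho$ is a squared Bessel process of dimension $\delta$, with generator $\mathcal G g(u)=2u\,g''(u)+\delta\,g'(u)$ on smooth $g$. The key structural fact is that $\mathcal G$ has coefficients affine in $u$, so the Laplace transform is exponential-affine: inserting the ansatz $\expected[u]{\ex^{-\lambda\rho_t}}=\ex^{-a(t)u-b(t)}$ into the backward Kolmogorov equation reduces it to the scalar Riccati equation $a'=-2a^2$, $a(0)=\lambda$, together with the quadrature $b'=\delta a$, $b(0)=0$. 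Solving these yields the classical formula
\be
\expected[u]{\ex^{-\lambda\rho_t}}=(1+2\lambda t)^{-\delta/2}\exp\Bigl(-\frac{\lambda u}{1+2\lambda t}\Bigr),\qquad \lambda\ge 0.
\ee

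Next I identify the density whose Laplace transform is the right-hand side. I claim it is
\be
q^\delta(t,u,v)=\frac{1}{2t}\Bigl(\frac{v}{u}\Bigr)^{\nu/2}\ex^{-\frac{u+v}{2t}}\,I_\nu\!\Bigl(\frac{\sqrt{uv}}{t}\Bigr),\qquad v>0,\ \ \nu=\frac{\delta}{2}-1,
\ee
a scaled noncentral chi-squared density. To verify this I insert the series $I_\nu(z)=\sum_{k\ge0}(z/2)^{2k+\nu}/(k!\,\Gamma(\nu+k+1))$ and integrate $\int_0^\infty \ex^{-\lambda v}q^\delta(t,u,v)\diff{v}$ term by term against the Gamma kernel $\int_0^\infty v^{k+\nu}\ex^{-\beta v}\diff{v}=\Gamma(k+\nu+1)\beta^{-(k+\nu+1)}$ with $\beta=\lambda+\tfrac{1}{2t}$; all terms are nonnegative for $\lambda,u\ge0$, so Tonelli justifies the interchange, and the resulting sum collapses to the exponential-affine expression above. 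Since a finite measure is determined by its Laplace transform, $q^\delta$ is the transition density of $\rho$.

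Returning to the Bessel process via $Z=\sqrt{\rho}$, the substitution $x=\sqrt u$, $y=\sqrt v$, $\diff{v}=2y\diff{y}$ gives $p^\delta(t,x,y)=2y\,q^\delta(t,x^2,y^2)$, and the powers of $x,y$ recombine to reproduce exactly $t^{-1}x^{-\nu}y^{\nu+1}\ex^{-(x^2+y^2)/(2t)}I_\nu(xy/t)$. The degenerate kernel $p^\delta(t,0,y)$ then follows by letting $x\downarrow 0$ and using $I_\nu(z)\sim(z/2)^\nu/\Gamma(\nu+1)$, which produces the stated $2^{-\nu}t^{-\nu-1}\Gamma(\nu+1)^{-1}y^{2\nu+1}\ex^{-y^2/(2t)}$.

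It remains to confirm that $p^\delta$ is the kernel of the semigroup generated by $(L^\delta,\rD_\delta)$ and not of some other boundary behaviour. I would check conservativeness, $\int_0^\infty p^\delta(t,x,y)\diff{y}=1$, and that $p^\delta$ respects the boundary condition $\lim_{x\downarrow0}x^{\delta-1}\partial_x p=0$ encoded in $\rD_\delta$; the uniqueness of the Feller semigroup generated by $(L^\delta,\rD_\delta)$, already recorded above, then pins down the density. I expect the main obstacle to be precisely this last point for $\delta\in(0,2)$, where $0$ is accessible: the interior PDE alone does not distinguish the instantaneously reflecting Bessel kernel from an absorbed one, so the argument must genuinely use the boundary condition (equivalently, conservativeness) to select the correct kernel. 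A more probabilistic alternative that sidesteps the Laplace inversion is to first treat integer $\delta=n$, where $Z=|W|$ for an $n$-dimensional Brownian motion $W$ and integrating the Gaussian heat kernel over the sphere of radius $y$ produces $I_{n/2-1}(xy/t)$ directly, and then to extend to real $\delta\in(0,\infty)$ by analytic continuation, the expressions being analytic in $\delta$ for fixed $t,x,y$.
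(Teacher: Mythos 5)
The paper does not actually prove this proposition: it is quoted from A.2 of \cite{GJYor03} and p.~446 of \cite{RevuzYor05}, so your derivation is being compared against a citation rather than an in-text argument. On its merits, your route is the classical one (essentially how Revuz--Yor obtain the formula): compute the $\mathrm{BESQ}^\delta$ Laplace transform via the exponential-affine ansatz and the Riccati equation, invert it against the Gamma kernel to get the noncentral-$\chi^2$-type density $q^\delta$, and change variables $y=\sqrt{v}$ to land on $p^\delta(t,x,y)=2y\,q^\delta(t,x^2,y^2)$. All of these computations are correct as written, including the Tonelli justification and the $x\downarrow 0$ asymptotics $I_\nu(z)\sim (z/2)^\nu/\Gamma(\nu+1)$ giving the degenerate kernel.

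Two points keep this from being complete. First, your opening step -- ``by It\^o's formula applied to the Bessel SDE associated with $(L^\delta,\rD_\delta)$, the process $\rho=(Z^\delta)^2$ is a squared Bessel process'' -- fails as stated for $\delta\in(0,1)$: there the Bessel process is not a semimartingale (its drift exists only as a principal value, cf.\ Theorem \ref{thm:bessel_to_sdes}.3), so It\^o's formula cannot be applied to $Z^\delta$. The fix is to reverse the construction, exactly as the paper does in Section \ref{sec:bessel-processes}: define $Y=\mathrm{BESQ}^\delta(y)$ as the unique strong solution of $Y_t=y+\delta t+2\int_0^t\sqrt{Y_s}\,\di B_s$ (no singularity, so applying It\^o to $s\mapsto\exp(-a(t-s)Y_s-b(t-s))$ rigorously yields your affine formula), and then set $Z=\sqrt{Y}$. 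Second, the step you yourself flag as the main obstacle -- showing that the kernel so obtained is the transition density of the Feller process generated by $(L^\delta,\rD_\delta)$, i.e.\ that it encodes instantaneous reflection rather than absorption when $\delta\in(0,2)$ -- is left as ``I would check'' and is precisely the nontrivial content linking the formula to the paper's definition. Your sketched resolution (conservativeness plus the boundary condition $\lim_{x\downarrow 0}x^{\delta-1}\partial_x p=0$, then uniqueness of the generated semigroup) is the right idea; to close it one can verify that for $f\in\rD_\delta$ the kernel satisfies $P_tf-f=\int_0^t P_sL^\delta f\,\di s$ (integration by parts using the boundary condition), so that the generator of the $p^\delta$-semigroup extends $(L^\delta,\rD_\delta)$, and then invoke maximality of generators of strongly continuous contraction semigroups -- a resolvent identity of the same kind as in the proof of Theorem \ref{t:bessel-mart-char}. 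As it stands, your argument is a correct and essentially standard derivation of the formula for the reflecting Bessel process, with the identification of the boundary behaviour acknowledged but not carried out.
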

The value $\nu=\frac{\delta}{2}-1$ is called the index of the Bessel process.
It is well known that for $\delta\in[2,\infty)$, $\{0\}$ is the instantaneous entrance point,
whereas for $\delta\in(0,2)$
it is an instantaneous reflecting point.

We now present a martingale characterization of the Bessel process, which will allow us to
identify a process as having the law of a Bessel process. For the proof, we adapt the approach of S.R.S.\ Varadhan,
who provided a similar characterization for reflected Brownian motion in \cite{Varadhan-RBM}.

By $C_b^2([0,+\infty),\bR)$ we understand a set of real valued functions defined on $[0,\infty)$ which are bounded
on $[0,\infty)$,
twice differentiable on $(0,\infty)$ and such that $f'$ and $f''$ are continuous and bounded on $(0,\infty)$.

\begin{lemma}
\label{p:DD}
Let
\ba
\widetilde \rD_\delta:=\{f\in C_b(\bar\bR_+,\bR)\cap C^2((0,+\infty),\bR)\colon
f'(0+)=0 \text{ and } L^\delta f \in C_b(\bar\bR_+,\bR)\}.
\ea
Then for any $\delta \in (0,\infty)$
\ba
\rD_\delta=\widetilde \rD_\delta.
\ea
Moreover, for $f\in \rD_\delta$ we have $f''(0+)=\frac{2L^\delta f(0+)}{\delta}$.
\end{lemma}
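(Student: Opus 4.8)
The plan is to exploit the fact that $\rD_\delta$ and $\widetilde\rD_\delta$ impose \emph{identical} conditions except at the origin: $f\in C_b(\bar\bR_+,\bR)\cap C^2((0,\infty),\bR)$ and $L^\delta f\in C_b(\bar\bR_+,\bR)$ in both cases, with the sole difference being the boundary requirement $\lim_{x\downarrow 0}x^{\delta-1}f'(x)=0$ versus $f'(0+)=0$. Hence it suffices to show that, under the common hypotheses, these two boundary conditions are equivalent, and simultaneously to read off $f''(0+)$. The key device is the substitution $h(x):=w_\delta(x)f'(x)=x^{\delta-1}f'(x)$. Using the divergence form $L^\delta f=\frac{1}{2w_\delta}(w_\delta f')'$ recorded above, I obtain the first-order identity
\ben
h'(x)=2x^{\delta-1}L^\delta f(x),\qquad x\in(0,\infty).
\een
Since $L^\delta f$ is bounded near $0$ (it lies in $C_b$) and $x^{\delta-1}$ is integrable at $0$ for every $\delta>0$, the right-hand side is integrable on $(0,\varepsilon)$; therefore $h$ extends continuously to $0$ and the finite limit $\ell:=h(0+)=\lim_{x\downarrow 0}x^{\delta-1}f'(x)$ exists.

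Writing $c:=L^\delta f(0+)$ and integrating the identity gives the representation
\ben
h(x)=\ell+\int_0^x 2y^{\delta-1}L^\delta f(y)\,\di y=\ell+\frac{2c}{\delta}\,x^{\delta}(1+o(1)),\qquad x\downarrow 0,
\een
and dividing by $x^{\delta-1}$ yields the asymptotic
\ben
f'(x)=\ell\,x^{1-\delta}+\frac{2c}{\delta}\,x\,(1+o(1)),\qquad x\downarrow 0.
\een
All of the remaining analysis follows from reading off the leading behaviour here.

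The inclusion $\rD_\delta\subseteq\widetilde\rD_\delta$ together with the $f''(0+)$ formula is the easy part, valid for all $\delta>0$. If $f\in\rD_\delta$, i.e.\ $\ell=0$, then $f'(x)=\frac{2c}{\delta}x(1+o(1))\to 0$, so $f'(0+)=0$ and $f\in\widetilde\rD_\delta$. Moreover, rearranging the generator relation as $f''(x)=2L^\delta f(x)-(\delta-1)\tfrac{f'(x)}{x}$ and letting $x\downarrow 0$, where $\tfrac{f'(x)}{x}\to f''(0+)$ because $f'(0+)=0$ and $L^\delta f(x)\to c$, gives the self-consistent equation $f''(0+)=2c-(\delta-1)f''(0+)$, whence $f''(0+)=\frac{2c}{\delta}=\frac{2L^\delta f(0+)}{\delta}$, as claimed.

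I expect the reverse inclusion $\widetilde\rD_\delta\subseteq\rD_\delta$ to be the main obstacle. For $\delta\ge 1$ it is immediate from the displayed asymptotic: the mode $\ell\,x^{1-\delta}$ does not vanish as $x\downarrow 0$ (it is constant for $\delta=1$ and diverges for $\delta>1$), so $f'(0+)=0$ is compatible with the asymptotic only when $\ell=0$, forcing $\lim_{x\downarrow0}x^{\delta-1}f'(x)=0$ and hence $f\in\rD_\delta$. The delicate regime is $\delta\in(0,1)$: there $x^{1-\delta}\to 0$, so the term $\ell\,x^{1-\delta}$ vanishes \emph{regardless} of $\ell$, and the condition $f'(0+)=0$ no longer controls $\ell$ through the leading term alone. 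Concretely, the harmonic (scale-function) mode $f\sim x^{2-\delta}$ has $L^\delta f\equiv 0$ and $f'(0+)=0$ while $\ell=2-\delta$, so it must be excluded by a finer argument that goes beyond the first-order asymptotic; pinning down this sub-leading behaviour is the heart of the proof in the small-dimensional regime.
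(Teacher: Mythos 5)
Your first inclusion ($\rD_\delta\subseteq\widetilde\rD_\delta$ together with the formula $f''(0+)=2L^\delta f(0+)/\delta$) and your treatment of the reverse inclusion for $\delta\in[1,\infty)$ are correct and essentially the paper's own argument: both integrate the divergence-form identity $(x^{\delta-1}f'(x))'=2x^{\delta-1}L^\delta f(x)$ from $0$ to $x$ and read off the asymptotics of $f'$. (One small tightening: instead of your ``self-consistent equation'', note that your expansion gives $f'(x)/x\to 2c/\delta$ directly, and the generator identity $f''(x)=2L^\delta f(x)-(\delta-1)f'(x)/x$ then yields $f''(x)\to 2c/\delta$; this avoids presupposing that $f''(0+)$ exists.)

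For the case $\delta\in(0,1)$ your hesitation should be turned into a conclusion: there is no ``finer argument'', because the function you exhibit is an actual counterexample, and the lemma as stated is false in this regime. Take $f(x)=x^{2-\delta}\eta(x)$ with $\eta$ smooth, $\eta\equiv1$ on $[0,1]$ and $\eta\equiv0$ on $[2,\infty)$. Then $f\in C_b(\bar\bR_+,\bR)\cap C^2((0,\infty),\bR)$, $L^\delta f$ vanishes near the origin and for $x\geq 2$ (so $L^\delta f\in C_b(\bar\bR_+,\bR)$), and $f'(0+)=\lim_{x\downarrow0}(2-\delta)x^{1-\delta}=0$; hence $f\in\widetilde\rD_\delta$. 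Yet $x^{\delta-1}f'(x)=2-\delta$ for $x\in(0,1)$, so $f\notin\rD_\delta$, and therefore $\rD_\delta\subsetneq\widetilde\rD_\delta$ for $\delta\in(0,1)$: the reflecting boundary condition $\lim_{x\downarrow0}x^{\delta-1}f'(x)=0$ is genuinely stronger than $f'(0+)=0$ there. Correspondingly, the paper's own proof of this case is flawed at exactly the point you flagged: the asserted bound $|\frac{\di}{\di x}(x^{\delta-1}f'(x))|\leq 2\|g\|_\infty$ should be $2x^{\delta-1}\|g\|_\infty$, which is unbounded near $0$ when $\delta<1$, and the concluding step --- that $f'(0+)=0$ forces $\lim_{a\downarrow0}\frac1a\int_0^a y^{\delta-1}f'(y)\,\di y=0$ --- is false: on the example above this average equals $2-\delta$ for all $a<1$. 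So what you found is an error in the paper, not a gap in your proof; the equality $\rD_\delta=\widetilde\rD_\delta$ holds only for $\delta\in[1,\infty)$, where your argument is complete. Note also that the later approximation argument (Theorem 2.4) only invokes the correct direction, namely that every $f\in\rD_\delta$ has $f'(0+)=0$ and finite $f''(0+)$, so the damage downstream appears to be limited.
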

\begin{proof}
1. Let $f\in \rD_\delta$.
Let us show that $f'(0+)=0$ and $f''(0+)\in\bR$.
Let $g:= L^\delta f$. Then 
\ba
\label{e:11}
\lim_{x \downarrow 0} g(x) =  \frac12 \lim_{x \downarrow 0} \frac{1}{x^{\delta - 1}} \frac{\di }{\di x} (x^{\delta -1} f'(x)) = g(0+)\in\bR.
\ea
Hence, for every $\varepsilon > 0$, there exists $x_\e \in (0,\infty)$ 
such that 
\ba
\Big|\frac{1}{2x^{\delta - 1}} \frac{\di }{\di x} (x^{\delta - 1} f'(x)) - g(0+)\Big| < \varepsilon \quad \text{ for every }
0 < x \leq x_\e.
\ea
Hence,
\ba
2(g(0+) - \varepsilon) x^{\delta - 1} < \frac{\di }{\di x}(x^{\delta - 1}f'(x)) < 2(g(0+) + \varepsilon) x^{\delta - 1}.
\ea
  Since $\delta\in (0,\infty)$ we can integrate these inequalities on the interval $[a,x]$ for any
$0<a<x< x_\e$, and then pass to the limit as
  $a \downarrow 0$ to obtain
\ba
2(g(0+) - \varepsilon) \frac{x^\delta}{\delta}
\leq  x^{\delta - 1}f'(x) - \lim_{a\downarrow 0} a^{\delta-1} f'(a)
\leq 2(g(0+) + \varepsilon) \frac{x^\delta}{\delta}.
\ea
Since $\lim_{a  \downarrow 0} a^{\delta-1} f'(a) = 0$, we have
\ba
\label{e:15}
2(g(0+) - \varepsilon) \frac{x}{\delta} \leq f'(x) \leq  2(g(0+) + \varepsilon) \frac{x}{\delta}
\ea
resulting in both $f'(0+) = 0$ as $x\downarrow 0$ and
\ba
\label{e:16}
f''(0+)=\lim_{x  \downarrow 0 } \frac{f'(x)}{x} =\lim_{x  \downarrow 0 } \frac{f'(x) - f'(0+)}{x} = \frac{2g(0+)}{\delta}\in \bR.
\ea

\medskip

\noindent
2.\ Let $f\in \widetilde \rD_\delta$.
Now we show that
$\lim_{x \downarrow 0} x^{\delta-1} f'(x)=0$.
If $\delta=[1,\infty)$, then the statement is obvious.

Let $\delta\in(0,1)$.
Since $g(x)= L^\delta f(x)$ is continuous and bounded on $\bar\bR_+$,
it holds that for every $x \in (0, 1]$
\ba
\Big|\frac{\di }{\di x} (x^{\delta - 1}f'(x))\Big| \leq  2 \|g\|_\infty.
\ea
Hence, the function $x \mapsto x^{\delta - 1}f'(x)$ is continuous up to $x=0$.
Since $f'(0+)=0$, by the Lebesgue differentiation theorem we have that
\ba
\lim_{x \downarrow 0} x^{\delta - 1}f'(x) & = \lim_{a \downarrow 0} \frac{1}{a} \int_0^a y^{\delta - 1}f'(y)\,  \di y =0.
\ea
\end{proof}

Let us work on the canonical probability space of continuous real-valued functions equipped with the filtration $\mathbb F$
generated by the coordinate mappings.

\begin{theorem}[martingale characterization]
\label{t:bessel-mart-char}
Let $(Z_t)_{t \geq 0}$ be a one-dimensional continuous stochastic process.
Then $Z$ is a Bessel process of dimension $\delta\in (0,\infty)$ started at $z \in [0,\infty)$ if and only if
\begin{enumerate}
\item[i)]
$Z_0 = z$ a.s.,
\item[ii)]
for every $f \in C_c^2((0, \infty),\bR)$, the process
\begin{equation}
M^f_t = f(Z_t) - f(z) - \int_0^t L^\delta f(Z_s)\,\di s
\end{equation}
is a martingale,
\item[iii)]
the process $Z$ spends zero time at zero, i.e.,
\ba
\int_0^\infty \mathbb I_{\{0\}}(Z_s) \diff{s} = 0\text{ a.s.},
\ea
\end{enumerate}

\end{theorem}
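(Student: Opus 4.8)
The plan is to prove the two implications separately: necessity is a direct consequence of the definition of the Bessel generator, while sufficiency is the substantive part, and it is there that hypothesis~(iii) becomes decisive.

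For necessity, suppose $Z=\mathrm{BES}^\delta(z)$. Then (i) holds by definition. Any $f\in C_c^2((0,\infty),\bR)$ vanishes in a neighbourhood of $0$, so both boundary requirements defining $\rD_\delta$ in \eqref{e:Ddelta} are trivially met ($x^{\delta-1}f'(x)\equiv 0$ near $0$, and $L^\delta f$ is continuous with compact support, hence lies in $C_b(\bar\bR_+,\bR)$); thus $f\in\rD_\delta$ and the Dynkin martingale identity for the generator yields~(ii). For~(iii) I invoke the boundary classification recalled above: for $\delta\in[2,\infty)$ the origin is polar (an entrance point when $z=0$) and is not visited on $(0,\infty)$, while for $\delta\in(0,2)$ it is an \emph{instantaneously} reflecting, non-sticky boundary; in all cases $\P(Z_s=0)=0$ for each fixed $s>0$ by the density \eqref{eq:trans-prob-bessel}, and Fubini gives $\int_0^\infty\mathbb I_{\{0\}}(Z_s)\diff{s}=0$ a.s.

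For sufficiency, assume (i)--(iii), with $Z$ taking values in $[0,\infty)$ as befits the Bessel state space. The strategy is to upgrade~(ii) from $C_c^2((0,\infty),\bR)$ to the \emph{entire} domain $\rD_\delta$: once $M^f$ is a martingale for every $f\in\rD_\delta$, the process $Z$ solves the martingale problem for the Feller generator $(L^\delta,\rD_\delta)$, and well-posedness (a core determines the law) forces $Z$ to have the law of $\mathrm{BES}^\delta(z)$. The class $C_c^2((0,\infty),\bR)$ alone is \emph{not} a core, as it is blind to the behaviour at $0$; this is where Lemma~\ref{p:DD} is essential, for it trades the flux condition $\lim_{x\downarrow 0}x^{\delta-1}f'(x)=0$ for the Neumann condition $f'(0+)=0$ and guarantees that $L^\delta f$ extends continuously to $0$. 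Fixing $f\in\rD_\delta$, I approximate it by $f_n=\chi_n f\in C_c^2((0,\infty),\bR)$, where $\chi_n$ is a smooth cutoff that vanishes on $[0,\e_n]$ and equals $1$ on $[2\e_n,\infty)$ (a further cutoff near $+\infty$ is rendered harmless by the boundedness of $f$ and a localizing sequence of stopping times). Each $M^{f_n}$ is a martingale, and I let $n\to\infty$. Since $M^f$ has continuous paths it suffices to verify the martingale identity for Lebesgue-a.e.\ $t$, and for such $t$ hypothesis~(iii) and Fubini give $\P(Z_t=0)=0$, so $f_n(Z_t)\to f(Z_t)$ and $f_n(z)\to f(z)$ a.s.

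The \textbf{main obstacle} is the generator term. Writing $L^\delta f_n=\chi_n L^\delta f+R_n$ with $R_n=\tfrac12\chi_n''f+\chi_n'f'+\tfrac{\delta-1}{2x}\chi_n'f$ supported on $[\e_n,2\e_n]$, the bulk part $\int_0^t\chi_n(Z_s)L^\delta f(Z_s)\diff{s}\to\int_0^t L^\delta f(Z_s)\diff{s}$ by dominated convergence, and this step uses~(iii) in an essential way, since it is precisely the null occupation time of $\{0\}$ that makes $\chi_n(Z_s)\to 1$ for a.e.\ $s$ along a.e.\ path. What remains is to prove $\int_0^t R_n(Z_s)\diff{s}\to 0$, and here a term-by-term bound is hopeless: for $\delta<2$ one has $\sup|R_n|\sim\e_n^{-2}$ whereas $Z$ spends only time of order $\e_n^{\delta}$ in $[\e_n,2\e_n]$, so the crude estimate diverges. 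The resolution is to exploit the cancellation forced by the scale and speed of $L^\delta$: the two singular contributions combine as $\tfrac12\chi_n''f+\tfrac{\delta-1}{2x}\chi_n'f=\tfrac{f}{2}\,x^{1-\delta}(x^{\delta-1}\chi_n')'$, and pairing $R_n$ with the natural occupation/speed density $\propto x^{\delta-1}$ and integrating by parts converts it into the boundary flux $x^{\delta-1}f'(x)$ evaluated at $0$, together with a remainder of order $\int f'(a)a^{\delta-1}\chi_n'(a)\diff{a}$. The boundary flux vanishes by the domain condition of Lemma~\ref{p:DD}, and the remainder tends to $0$ as $\e_n\to0$ because $f'(a)=o(1)$ at the origin (indeed $f'(0+)=0$), which is exactly the information Lemma~\ref{p:DD} provides. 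I stress that this passage cannot be shortcut via It\^o--Tanaka, since $\mathrm{BES}^\delta$ is \emph{not} a semimartingale for $\delta\in(0,1)$; the whole argument must remain within the martingale-problem and occupation-density framework. The conceptual content is that condition~(iii) singles out the instantaneously reflecting boundary among all boundary behaviours compatible with the interior dynamics prescribed by~(ii), which is exactly what is needed to close the uniqueness argument and identify the Bessel law.
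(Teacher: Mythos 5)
Your overall architecture matches the paper's: necessity via Dynkin's formula (noting $C_c^2((0,\infty),\bR)\subset\rD_\delta$), and sufficiency by upgrading \emph{(ii)} from $C_c^2((0,\infty),\bR)$ to the full domain $\rD_\delta$ and then identifying the law (the paper does this last step via an explicit resolvent computation and a uniqueness theorem of Kolokoltsov; your appeal to well-posedness of the martingale problem is an acceptable summary of the same idea). The genuine gap is in your treatment of the error term $R_n$. Your proposed resolution converts $\int_0^t R_n(Z_s)\,\di s$ into a spatial integral against the speed density $x^{\delta-1}$, integrates by parts, and moreover uses the estimate that $Z$ spends time of order $\e_n^{\delta}$ in $[\e_n,2\e_n]$. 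But at this stage of the proof $Z$ is only a continuous process satisfying \emph{(i)}--\emph{(iii)}: it is not known to possess occupation densities with respect to $m_\delta$, nor is there any quantitative bound on the time it spends in small annuli. Hypothesis \emph{(iii)} gives only the qualitative statement $\int_0^t\mathbb I_{[0,\e)}(Z_s)\,\di s\to 0$ as $\e\downarrow 0$, with no rate. Assuming an occupation-density/speed-measure structure for $Z$ is essentially assuming the conclusion (that $Z$ is a Bessel-type diffusion), so this step is circular as written.

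Moreover, the obstacle you set out to circumvent is an artifact of not normalizing $f$. Since $L^\delta$ annihilates constants, $M^f=M^{f-f(0)}$, so you may assume $f(0)=0$ (the paper does exactly this). Then Lemma \ref{p:DD} gives $f'(0+)=0$ and $f''(0+)\in\bR$, hence $f'(x)=\mathcal O(x)$ and $f(x)=\mathcal O(x^2)$ near the origin, and each term of $R_n=\tfrac12\chi_n''f+\chi_n'f'+\tfrac{\delta-1}{2x}\chi_n'f$ is bounded uniformly in $n$: on $[\e_n,2\e_n]$ one has $\chi_n''f=\mathcal O(\e_n^{-2}\cdot\e_n^2)$, $\chi_n'f'=\mathcal O(\e_n^{-1}\cdot\e_n)$, and $x^{-1}\chi_n'f=\mathcal O(\e_n^{-2}\cdot\e_n^2)$. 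With $\sup_n\|L^\delta f_n\|_\infty<\infty$ in hand, you get $\int_0^t|R_n(Z_s)|\,\di s\le C\int_0^t\mathbb I_{(0,2\e_n)}(Z_s)\,\di s$, and this together with the bulk term is killed by \emph{(iii)} and dominated convergence --- exactly the mechanism of the paper, which uses a shifted cubic-cap approximation instead of your multiplicative cutoff but relies on the same two ingredients: uniform boundedness of the generator images and zero occupation time at the origin. So your proof is repairable with one line, but as written its key analytic step rests on information about $Z$ that is unavailable before $Z$ has been identified as a Bessel process.
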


\begin{proof}
1. First, let us assume that $Z$ is a Bessel process of dimension $\delta \in(0,\infty)$, started at $z\in[0,\infty)$.
Then \emph{i)} and \emph{iii)} hold true.
For any $f\in C^2_c((0,\infty),\bR) \subset \rD_q=\widetilde \rD_\delta$, Dynkin's formula (see, e.g., Lemma 17.21
in \cite{Kallenberg-02}) yields \emph{ii)}.

\medskip
\noindent
2. Assume that a continuous process $Z$ satisfies \emph{i)}, \emph{ii)} and \emph{iii)}, and let $\P_z$
be its probability measure.

Assume first that \emph{ii)} holds for any $f\in\widetilde \rD_\delta$.
Then
\ba
\mathbf E_z f(Z_t) =  f(z) + \int_0^t \mathbf E_z L^\delta (Z_s)\, \di s
\ea
and hence for each $\lambda>0$
\ba
\lambda \int_0^\infty \ex^{-\lambda t}\mathbf E_z f(Z_t)\,\di t
=  f(z) + \int_0^\infty \ex^{-\lambda t }\mathbf E_z L^\delta (Z_t)\, \di t
\ea
or equivalently
\ba
\label{e:res}
&\int_0^\infty \ex^{-\lambda t}\mathbf E_z \Big(\lambda f(Z_t) -   L^\delta f(Z_t)\Big)  \,\di t =  f(z).
\ea
Let $R_\lambda$ denote the resolvent operator of $Z$, namely, let
\ba
R_\lambda g(z):= \int_0^\infty \ex^{-\lambda t}\mathbf E_z g(Z_t)\,\di t,
\ea
and let $R_\lambda^\delta$ be the resolvent of the Bessel semigroup \eqref{eq:trans-prob-bessel},
\ba
R_\lambda^\delta g (z): =\int_0^\infty \ex^{-\lambda t} \int_0^\infty g(y)p^\delta(t,z,y)\,\di y\,\di t.
\ea
Since the Bessel semigroup is a Feller semigroup,
$R^\delta_\lambda g \in\rD_\delta$ for any $g\in C_b(\bar\bR_+,\bR)$ and for any
$g\in C_b(\bar\bR_+,\bR)$ there is a unique $f\in\rD_\delta$ be such that
\ba
\lambda f-L^\delta f=g.
\ea
Combining this with \eqref{e:res} we get that for any $g\in C_b(\bar\bR_+,\bR)$
\ba
R_\lambda g  =  f= R_\lambda^\delta g
\ea
which implies that 
\ba
\label{e:EZ}
\mathbf E_z f(Z_t)= \int_0^\infty f(y) p^\delta(t,z,y)\,\di y
\ea
for almost all $t\in[0,\infty)$. Since $Z$ is continuous, \eqref{e:EZ} holds for all $t\in[0,\infty)$, so that
$Z$ has the same one-dimensional distributions as a Bessel process. By the
uniqueness Theorem 4.10.1 in \cite{Kolokoltsov-11},
$Z$ is a strong Markov process, and hence, is a Bessel process of dimension $\delta\in (0,\infty)$.

\marginpar{To finish, approximations}

To complete the proof we need to show that \emph{ii)} holds for every $f \in \widetilde \rD_\delta= \rD_\delta$.
Without loss of generality we can assume that $f\in \widetilde\rD_\delta$ satisfies $f(0)=0$.

For $f\in \widetilde\rD_\delta$, we construct an approximating sequence
$\{f_n\}_{n\in\mathbb N}\subset C^2_c(\bR_+,\bR)$ as follows. First, we set
\ba
\widetilde f''_n(x)&=2n\Big(x-\frac{1}{2n}\Big)f''(0+)\mathbb I_{[\frac{1}{2n},\frac{1}{n})}(x)
+f''\Big(x-\frac1n\Big)\mathbb I_{[\frac{1}{n},\infty)}(x),\\
\widetilde f'_n(x)&=n(x-\frac{1}{2n})^2f''(0+)\mathbb I_{[\frac{1}{2n},\frac{1}{n})}(x) +
\Big(\frac1{4n}f''(0+)+ f'(x-\frac1n)\Big)\mathbb I_{[\frac{1}{n},\infty)}(x),\\
\widetilde f_n(x)&=\frac{n}{3}(x-\frac{1}{2n})^3 f''(0+)\mathbb I_{[\frac{1}{2n},\frac{1}{n})}(x) +
\Big( \frac1{24 n^2} f''(0+) + \frac{x-1/n}{4n}f''(0+)+ f(x-\frac1n)\Big) \mathbb I_{[\frac{1}{n},\infty)}(x).
\ea
Let $\eta\in C^\infty(\bR,\bR)$ be such that $\eta(x)=1$ for $x\in(-\infty,0]$ and $\eta(x)=0$ for $x\in [1,\infty)$.
Then we set
\ba
\eta_n(x)&=\eta(x-n),\\
f_n(x) &= \widetilde f_n(x)\eta_n(x),
\ea
so that $f_n\in C^2_c(\bR_+,\bR)$. Furthermore, we have pointwise convergence
\ba
&\lim_{n\to\infty} f_n(z)=f(z),\quad z\in [0,\infty).
\ea
The straightforward calculation yields:
\ba
L^\delta f_n(x)=\begin{cases}
\displaystyle
                 0, & x\in [0,1/2n)\cup [n+1,\infty),\\
\displaystyle
f''(0+)\Big[n\Big(x-\frac{1}{2n}\Big) + \frac{\delta-1}{2x} n\Big(x-\frac{1}{2n}\Big)^2\Big], & x\in [1/2n,1/n),\\
\displaystyle
L^\delta f(x-1/n) +\frac{\delta-1}{2x}\frac1{4n}f''(0+),& x\in [1/n,n),\\
\displaystyle
L^\delta \Big[f(x -1/n)\eta_n(x)\Big]
+ \Big( \frac1{24 n^2} f''(0+) + \frac{x-1/n}{4n}f''(0+) \Big)L^\delta \eta_n(x), & x\in [n,n+1),\\
                \end{cases}
\ea
and it is clear that
\ba
\sup_{n\in\mathbb N}\|L^\delta \eta_n\|_\infty<\infty,\quad n\in\mathbb N.
\ea
Furthermore,
\ba
L^\delta\Big[f(x -1/n)\eta_n(x)\Big]
&=\frac12\Big( f''(x -1/n)\eta_n(x)  +2 f(x -1/n)\eta'_n(x) + f(x -1/n)\Big)\\
&+ \frac{\delta-1}{2x } \Big( f'(x -1/n)\eta_n(x)  +f(x -1/n)\eta'_n(x) \Big)\\
&= \eta_n(x)L f(x -1/n) + f(x -1/n)\eta'_n(x) + \frac12 f(x -1/n)
+  \frac{\delta-1}{2x }f(x -1/n)\eta'_n(x).
\ea
Combining these estimates we get that
\ba
&\lim_{n\to\infty} L^\delta f_n(z)=L^\delta f(z),\quad z\in (0,\infty).
\ea
and therefore
\ba
\sup_n\|L^\delta f_n\|_\infty<\infty.
\ea
By \emph{ii)}, for each $n\in\mathbb N$ the process $M^{f_n}$ is a martingale.

Let us consider the limit of $M_t^{f_n}$ as $n \to \infty$. First, due to the convergence described
above, we have $f_n(Z_t) \to f(Z_t)$ for each $t\in [0,\infty)$.
Then, for each $\e\in(0,1]$
\ba
\label{e:ww}
|M_t-M_t^{f_n}|
&\leq |f_n(Z_t) - f(Z_t)| + |f_n(z)- f(z)| \\
&+ \int_0^t |L^\delta f(Z_s) -L^\delta f_n(Z_s)|\mathbb{I}_{[0, \e)}(Z_s)\,\di s\\
&+
\int_0^t |L^\delta f(Z_s) -L^\delta f_n(Z_s)|\mathbb{I}_{[\e, \infty)}(Z_s)\,\di s.
\ea
Since the sequence $\{Lf_n\}_{n\in\mathbb N}$ is uniformly bounded in the supremum norm, for each $\e\in (0,1]$  and  $t\in(0,\infty)$
by \emph{iii)}
we have
by Fatou's Lemma that
\ba
\limsup_{\e \downarrow 0}\int_0^t  |L^\delta f(Z_s) -L^\delta f_n(Z_s)|\mathbb{I}_{[0, \e)}(Z_s)\, \di s
&\leq C \limsup_{\e \downarrow 0}\int_0^t \mathbb I_{[0, \e)}(Z_s)\, \di s\\
&\leq C \int_0^t \limsup_{\e \downarrow 0}\mathbb I_{[0, \e)}(Z_s)\, \di s=0\quad \text{a.s.}
\ea
The last integral in \eqref{e:ww} converges to $0$ for any $\e\in (0,1]$ as $n\to\infty$, too.
Finally, we note that $|M_t-M_t^{f_n}|\leq C$ a.s., and the
the bounded convergence theorem implies that $M^f$ is a martingale.
\end{proof}

A Bessel process of dimension $\delta\in\bR$ can also be realized with the help of the so-called Square Bessel Process. Let $B$ be a standard Brownian motion.
Then, for any $y\in[0,\infty)$ and $\delta\in \bR$, the SDE
\ba
Y_t=y +\delta t +2 \int_0^t \sqrt{Y_s}\,\di B_s
\ea
has a unique strong solution, which we denote $\mathrm{BESQ}^\delta(y)$.
For $\delta\in(0,\infty)$, $Y$ is a non-negative strong Markov process, and $\sqrt{Y}=\text{BES}^\delta(\sqrt y)$,
see Definition 1.9 in \cite{RevuzYor05}.

For $\delta\in(-\infty,0]$, and $y\in (0,\infty)$, the process $Y$ reaches $0$ in finite time and stays there.
Hence, for $\delta\in(-\infty,0]$, the Bessel process can be defined only until the first hitting time $\tau_0$.

For $\delta\in (-\infty,2)$, the transition density of the Bessel process killed at $0$
equals
\ba
\label{e:bessel-killed}
p^{\delta,\dag} (t, x , y)
=
\begin{cases}
t^{-1}  x^{-\nu} y^{\nu + 1} \ex^{-\frac{x^2 + y^2}{2t}} I_{-\nu}(xy/t), &  \delta \in [0,2),
\ \nu=\delta/2-1,\\
t^{-1}  x^{\widetilde \nu} y^{-\tilde \nu + 1} \ex^{-\frac{x^2 + y^2}{2t}} I_{\tilde\nu}(xy/t),
& \delta \in (-\infty,0),\ \tilde \nu=|\delta|/2+1.
\end{cases}
\ea
Note, that $I_{-1}=I_1$.

\section{Skew Bessel processes\label{s:skewBessel}}

In the previous section, we studied Bessel processes taking values on the positive half-line $\bR_+$.

A natural question is whether a Bessel process can be extended to the negative half-line in
such a way that the resulting process is strong Markov, spends zero time at $0$, and behaves like
$\pm\mathrm{BES}$
away from zero. This can be achieved using \emph{skew Bessel processes}.

Since a Bessel process cannot reach the origin from a starting point $z\in(0,\infty)$ for $\delta\in[2,\infty)$,
or is gets trapped at the origin for $\delta\in(-\infty,0]$,
a non-trivial skew Bessel process can only be constructed for dimensions $\delta\in(0,2)$.

For completeness, we first consider the trivial cases.

Let $\delta\in [2,\infty)$ and $\theta\in [-1,1]$.
We define the skew Bessel process $Z^{\delta,\theta}(z)$ as follows:
\ba
&\text{for }z\in (0,\infty)\quad Z^{\delta,\theta}(z)=\mathrm{BES}^\delta(z),\\
&\text{for }z\in (-\infty,0)\quad Z^{\delta,\theta}(z)=-\mathrm{BES}^\delta(|z|).
\ea
For $z=0$, we randomize the sign of the process and set
\ba
\mathrm{Law} (Z^{\delta,\theta}(0))=\frac{1+\theta}{2} \mathrm{Law} (Z^\delta(0)) +\frac{1-\theta}{2} \mathrm{Law} (-Z^\delta(0)).
\ea

Let $\delta\in(-\infty,0]$. Then for any $\theta\in [-1,1]$ and any $z\in (0,\infty)$ we set $Z^{\delta,\theta}(z)=Z^{\delta}(z)$
for any $z\in (-\infty,0)$ we set $Z^{\delta,\theta}(z)=-Z^{\delta}(|z|)$, and
$Z^{\delta,\theta}(0)\equiv 0$.
These processes equal to zero for $t\in[\tau_0,\infty)$.

In the nontrivial case $\delta\in (0,2)$, let us first consider the skewness parameter $\theta\in (-1,1)$.

We define the strictly increasing scale function
\ba
\label{eq:skew-scale}
S_{\delta,\theta}(x)
=\frac{2}{\operatorname{sign}(x)+ \theta} |x|^{2 - \delta}
=
\begin{cases}
\frac{2}{1+\theta} x^{2 - \delta}, & x \in[0,\infty), \\
-\frac{2}{1-\theta} |x|^{2 - \delta}, & x\in(-\infty, 0),
\end{cases}
\ea
and the speed measure $M_{\delta,\theta}$ with the
density
\ba
\label{e:mdt}
m_{\delta,\theta}(x)=
 \frac{1-\theta}{2-\delta}|x|^{\delta-1}\mathbb I_{(-\infty,0)}(x)
+\frac{1+\theta}{2-\delta}x^{\delta-1}\mathbb I_{(0,\infty)}(x).
\ea
We say that
$C_b(\bar\bR,\bR)$ is a space of real valued bounded functions that are continuous on $\bR$ and such that the limits
\ba
f(+\infty)&:=\lim_{x \uparrow +\infty} f(x),\quad f(-\infty):=\lim_{x \downarrow -\infty} f(x)
\ea
exist and are finite.
Consider the operator
\ba
L^{\delta,\theta}f(x)=\frac12 D_{M_{\delta,\theta}}D^+_{S_{\delta,\theta}}f(x)
\ea
on the domain
\ba
\rD_{\delta,\theta}
=\Big\{
f&\in C_b(\bar\bR,\bR)\cap C^2(\mathbb R\backslash \{0\},\bR)\colon
(1-\theta) \lim_{x\uparrow 0}|x|^{\delta-1}f'(x)= (1+\theta) \lim_{x\downarrow 0} x^{\delta-1}f'(x),\
L^{\delta } f\in C_b(\bar\bR,\bR),
\Big\}
\ea
It is clear that on the domain $\rD_{\delta,\theta}$, the operator $L^{\delta,\theta}$ takes the form
\eqref{eq:bessel_op}.

Then, the operator $(L^{\delta},\rD_{\delta,\theta})$ generates a
strongly continuous (Feller) semigroup. The corresponding process is the
skew Bessel process.

For $\theta=1$, we define the skew Bessel process as follows:
\ba
Z^{\delta,1}(z)=Z^{\delta}(z),\quad \text{for } z\in[0,\infty)
\ea
and for $z\in (-\infty,0)$
\ba
Z^{\delta,1}_t(z) =\begin{cases}
- Z^{\delta}(|z|),\quad t\in[0,\tau_0),\\
Z^{\delta}_{t-\tau_0}(0),\quad t\in[\tau_0,\infty).
                   \end{cases}
\ea
For $\theta=-1$, the construction is analogous.

It is clear, that $|Z^{\delta,\theta}|$ is a $\delta$-dimensonal Bessel process for any $\theta\in[-1,1]$.

\begin{proposition}[\cite{alili2019semi}, Theorem 2]
  The transition probability of a skew Bessel process of dimension $\delta\in (0,2)$ is
\ba
\label{eq:trans-prob-skew-bessel}
p^{\delta,\theta}(t, z, x)  =
\begin{cases}
    \displaystyle
\Big[\frac{1 + \theta}{2} I_\nu\Big(\frac{|xy|}{t}\Big) + \frac{1 - \theta}{2} I_{-\nu} \Big( \frac{|xy|}{t}\Big)\Big]
 t^{-1}|x|^{-\nu}|y|^{\nu+1} \ex^{-\frac{x^2 + y^2}{2t}}& x > 0 \text{ and } y > 0 \\
\displaystyle
\Big[\frac{1 - \theta}{2}I_\nu\Big(\frac{|xy|}{t}\Big)  - \frac{1 - \theta}{2} I_{-\nu}\Big(\frac{|xy|}{t}\Big) \Big]
 t^{-1}|x|^{-\nu}|y|^{\nu+1} \ex^{-\frac{x^2 + y^2}{2t}}
& x > 0 \text{ and } y < 0 \\
\displaystyle
 \Big[\frac{1+\theta}{2} I_\nu\Big(\frac{|xy|}{t}\Big) -   \frac{1+\theta}{2}  I_{-\nu}\Big(\frac{|xy|}{t}\Big) \Big]
  t^{-1}|x|^{-\nu}|y|^{\nu+1} \ex^{-\frac{x^2 + y^2}{2t}}
 & x < 0 \text{ and } y > 0 \\
 \displaystyle
\Big[\frac{1-\theta}{2} I_\nu\Big(\frac{|xy|}{t}\Big)  + \frac{1+\theta}{2} I_{-\nu}\Big(\frac{|xy|}{t}\Big) \Big]
 t^{-1}|x|^{-\nu}|y|^{\nu+1} \ex^{-\frac{x^2 + y^2}{2t}}
& x < 0 \text{ and } y < 0
    \end{cases}
\ea
  with
\ba
F(t, 0, x) = \lim_{z \to 0} F(t, z, x).
\ea
\end{proposition}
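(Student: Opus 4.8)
The plan is to read off the four-case formula as a probabilistic decomposition of the skew Bessel path according to whether it has already visited the origin by time $t$, and then to make this rigorous either through Itô's excursion theory or through a direct resolvent computation. Write $p^\delta$ for the reflecting Bessel density (the $I_\nu$-kernel of \eqref{eq:trans-prob-bessel}) and $p^{\delta,\dag}$ for the Bessel density killed at $0$ (the $I_{-\nu}$-kernel of \eqref{e:bessel-killed}); for $\delta\in(0,2)$ one has $\nu=\delta/2-1\in(-1,0)$, and since killing can only lower the density, $p^\delta\ge p^{\delta,\dag}\ge 0$. The nonnegative difference
\[
r(t,a,b):=p^\delta(t,a,b)-p^{\delta,\dag}(t,a,b),\qquad a,b>0,
\]
is exactly the sub-probability density of a reflecting $\mathrm{BES}^\delta$ sitting at level $b$ at time $t$ \emph{having already hit} $0$, i.e.\ the contribution of paths with $T_0\le t$, where $T_0$ is the first hitting time of the origin.

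First I would use the structural facts already in hand: $|Z^{\delta,\theta}|$ is a genuine $\mathrm{BES}^\delta$, and away from $0$ the sign of $Z^{\delta,\theta}$ is frozen, so sign changes occur only at visits to $0$. Fixing a start $z$, I split
\[
\P_z(Z^{\delta,\theta}_t\in\di y)=\P_z(Z^{\delta,\theta}_t\in\di y,\,T_0>t)+\P_z(Z^{\delta,\theta}_t\in\di y,\,T_0\le t).
\]
On $\{T_0>t\}$ the process never leaves the starting half-line and its modulus is a killed Bessel, so this term contributes $p^{\delta,\dag}(t,|z|,|y|)$ on the half-line of $z$ and $0$ on the opposite one. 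On $\{T_0\le t\}$ the time $t$ lies in the excursion straddling it, whose left endpoint is a zero at or after $T_0$; by construction this excursion carries sign $+$ with probability $\tfrac{1+\theta}{2}$ and $-$ with probability $\tfrac{1-\theta}{2}$, independently of its shape and hence of $|Z^{\delta,\theta}_t|$, contributing $\tfrac{1\pm\theta}{2}\,r(t,|z|,|y|)$ according to the sign of $y$. Assembling the two pieces in each of the four sign combinations reproduces \eqref{eq:trans-prob-skew-bessel}; for $z>0,\ y>0$, for instance, one gets $p^{\delta,\dag}+\tfrac{1+\theta}{2}(p^\delta-p^{\delta,\dag})=\tfrac{1+\theta}{2}p^\delta+\tfrac{1-\theta}{2}p^{\delta,\dag}$, and the other three are identical. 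The case $F(t,0,x)=\lim_{z\to 0}F(t,z,x)$ follows from continuity of the kernel in the starting point.

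The main obstacle is the independence claim: that conditionally on $\{T_0\le t\}$ the straddling excursion's sign is independent of $|Z^{\delta,\theta}_t|$ with law $\tfrac{1\pm\theta}{2}$. Making this precise requires Itô's excursion decomposition of the reflecting $\mathrm{BES}^\delta$ together with the identification of the generator-defined skew Bessel $(L^{\delta,\theta},\rD_{\delta,\theta})$ with the process obtained by flipping the sign of each excursion by an independent coin of success probability $\tfrac{1+\theta}{2}$; once excursions are indexed by local time and their signs are i.i.d.\ and independent of the excursion trajectories, the sign of the excursion containing $t$ is independent of everything measurable with respect to that excursion, in particular of $|Z^{\delta,\theta}_t|$.

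A fully self-contained alternative, using only material already in the paper, is to verify directly that \eqref{eq:trans-prob-skew-bessel} is the transition density of the Feller semigroup generated by $(L^{\delta,\theta},\rD_{\delta,\theta})$. As that semigroup is unique, it suffices to check that $f:=R_\lambda^{\delta,\theta}g$, obtained by integrating the candidate kernel, lies in $\rD_{\delta,\theta}$ and solves $\lambda f-L^{\delta,\theta}f=g$ for $g\in C_b(\bar\bR,\bR)$. Away from $0$ this is the one-sided Bessel resolvent identity; the delicate point is the gluing condition at the origin. Using $I_{\pm\nu}(w)\sim (w/2)^{\pm\nu}/\Gamma(1\pm\nu)$ as $w\to 0$, the $I_\nu$-part of the kernel contributes a flux $\propto |y|^{\delta-1}$ and the $I_{-\nu}$-part a flux $\propto |y|$, and tracking the coefficients $\tfrac{1\pm\theta}{2}$ through $\lim_{z\downarrow 0}z^{\delta-1}\partial_z f$ and $\lim_{z\uparrow 0}|z|^{\delta-1}\partial_z f$ yields exactly the balance $(1-\theta)\lim_{z\uparrow 0}|z|^{\delta-1}f'=(1+\theta)\lim_{z\downarrow 0}z^{\delta-1}f'$ imposed by $\rD_{\delta,\theta}$. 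This computation, though elementary, is where essentially all of the work sits.
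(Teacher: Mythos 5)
The paper offers no proof of this proposition at all: it is imported verbatim from Theorem 2 of \cite{alili2019semi}, so any argument you supply is by construction a different route. Your route is sound in outline, and your first-passage decomposition is in fact exactly the ``unified form'' the paper records immediately after the proposition, namely $p^{\delta,\theta}(t,x,y)=p^{\delta,\dag}(t,|x|,|y|)\,\mathbb{I}_{(0,\infty)}(xy)+\tfrac{1+\theta\operatorname{sign}y}{2}\bigl(p^{\delta}(t,|x|,|y|)-p^{\delta,\dag}(t,|x|,|y|)\bigr)$; assembling your two pieces in each of the four sign combinations does reproduce \eqref{eq:trans-prob-skew-bessel}, and your reading of the $I_\nu$/$I_{-\nu}$ split as full-versus-killed kernel makes transparent a probabilistic structure the paper leaves implicit. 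What your write-up does not yet deliver is the step you yourself flag as the obstacle: the identification of the generator-defined process $(L^{\delta,\theta},\rD_{\delta,\theta})$ with the excursion-sign-flipping construction. Without that identification, the claim that the excursion straddling $t$ carries sign $+$ with probability $\tfrac{1+\theta}{2}$ independently of $|Z^{\delta,\theta}_t|$ is not available, and the decomposition is a reduction of the proposition to a known-but-nontrivial fact rather than a proof of it; that identification (or, equivalently, your resolvent alternative, with the flux asymptotics at the origin actually carried out) is essentially the entire content of the cited theorem. To close the argument you should either complete the resolvent computation, verifying $f=R^{\delta,\theta}_\lambda g\in\rD_{\delta,\theta}$ and $\lambda f-L^{\delta,\theta}f=g$ including the gluing condition $(1-\theta)\lim_{z\uparrow 0}|z|^{\delta-1}f'(z)=(1+\theta)\lim_{z\downarrow 0}z^{\delta-1}f'(z)$, or invoke the excursion-theoretic identification available in \cite{Blei12} or \cite{watanabebilateral}.
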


The transition density \eqref{eq:trans-prob-skew-bessel} can be written in the unified form as
\ba
p^{\delta,\theta}(t,x,y)=p^{\delta,\dag}(t,|x|,|y|)\mathbb I_{(0,\infty)}(xy)
+ \frac{1+\theta\operatorname{sign} y}{2}\Big(p^\delta(t,|x|,|y|) - p^{\delta,\dag}(t,|x|,|y|) \Big)
\ea
where $p^{\delta,\dag}$ is the transition density of a $\delta$-dimensional Bessel process killed at $0$
given in \eqref{e:bessel-killed}

We have the following martingale characterization of the skew Bessel processes, see Definition 3 in \cite{watanabebilateral}.

\begin{theorem}[semimartingale characterization]
\label{t:sbessel-mart-char}
A continuous Markov stochastic process $(Z_t)_{t\in[0,\infty)}$ defined on a filtered space
$(\Omega, \rF, (\rF_t), \P)$ is equal in law to a skew Bessel process of dimension
$\delta \in (0, 2)$ started at $z \in \mathbb{R}$ with skewness parameter $\theta \in [-1, 1]$ if and only if
\begin{enumerate}[label=(\roman*)]
\item
$Z_0 = z$ a.s.,

\item
For every $f \in C_c^2(\mathbb{R} \backslash \{0\})$, the process
\ba
      M^f_t = f(Z_t) - f(z) - \int_0^t L^{\delta,\theta} f(Z_s) \, \di s
\ea
is a martingale;
\item
the process $Z$ spends zero time at zero, i.e.,
$\displaystyle \int_0^t \mathbb I_{\{0\}}(Z_s)\,  \di s = 0$ a.s.,
\item
the function $S_{\delta,\theta}$ defined in \eqref{eq:skew-scale}
is a scale function of $Z$, i.e., $(S_{\delta,\theta}(Z_t))_{t\in[0,\infty)}$ is a local martingale.
\end{enumerate}
\end{theorem}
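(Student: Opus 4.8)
The plan is to mirror the proof of Theorem~\ref{t:bessel-mart-char}, lifting it to the two-sided setting and using the additional condition (iv) to capture the skewness, which the test functions in (ii) cannot detect since they are supported away from the origin. Necessity is routine: for the skew Bessel process conditions (i) and (iii) hold by construction, and since $C_c^2(\bR\setminus\{0\})\subset\rD_{\delta,\theta}$ with $L^{\delta,\theta}$ acting as in \eqref{eq:bessel_op} on each half-line, Dynkin's formula gives (ii); for (iv) one verifies from \eqref{eq:skew-scale} that $L^{\delta,\theta}S_{\delta,\theta}=0$ on $\bR\setminus\{0\}$ and that $S_{\delta,\theta}$ obeys the transmission condition defining $\rD_{\delta,\theta}$, so that $S_{\delta,\theta}(Z)$ is a local martingale (localizing at exit times from bounded regions to accommodate the unboundedness of $S_{\delta,\theta}$).

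For sufficiency I would first reduce to the absolute value. Applying (ii) to the test functions $x\mapsto g(|x|)$ with $g\in C_c^2((0,\infty))$---these lie in $C_c^2(\bR\setminus\{0\})$ and satisfy $L^{\delta,\theta}g(|\cdot|)(x)=L^\delta g(|x|)$ because $\operatorname{sign}(x)/x=1/|x|$---shows that $|Z|$ fulfils hypotheses (i)--(iii) of Theorem~\ref{t:bessel-mart-char}. That theorem then identifies $|Z|=\mathrm{BES}^\delta(|z|)$ in law, so only the sign mechanism at the origin remains to be determined, and this is exactly the information carried by (iv).

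The heart of the argument is to extend the martingale identity (ii) from $C_c^2(\bR\setminus\{0\})$ to the whole domain $\rD_{\delta,\theta}$. Given $f\in\rD_{\delta,\theta}$, I would peel off the skew contribution by writing $f=h+c\,\chi S_{\delta,\theta}$, where $\chi$ is a smooth compactly supported cutoff equal to $1$ near $0$ and the constant $c$ is chosen---using the transmission condition satisfied by $f$---so that the remainder $h$ has symmetric, $\theta$-independent behaviour $\lim_{x\downarrow0}x^{\delta-1}h'(x)=\lim_{x\uparrow0}|x|^{\delta-1}h'(x)=0$. The bounded function $h$ is then handled exactly as in Theorem~\ref{t:bessel-mart-char}, the one-sided approximation being performed on each half-line: one constructs $h_n\in C_c^2(\bR\setminus\{0\})$ with $h_n\to h$ pointwise and $\sup_n\|L^{\delta,\theta}h_n\|_\infty<\infty$, applies (ii) to each $h_n$, and passes to the limit using (iii) and bounded convergence to kill the contribution of the integral term near $0$. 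The skew part $c\,\chi S_{\delta,\theta}$ is treated by localization: since $\chi\equiv1$ near $0$ one has $L^{\delta,\theta}(\chi S_{\delta,\theta})=L^{\delta,\theta}S_{\delta,\theta}=0$ there, so $L^{\delta,\theta}(\chi S_{\delta,\theta})$ is supported away from the origin, and its martingale property follows by stopping $Z$ on leaving $\{\chi=1\}$ and combining (iv), which controls the behaviour near $0$, with (ii), which controls the compactly supported transition region.

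Once (ii) holds for all $f\in\rD_{\delta,\theta}$, I would conclude as in the one-sided case: taking $\mathbf E_z$ and Laplace transforms yields $\int_0^\infty \ex^{-\lambda t}\mathbf E_z(\lambda f(Z_t)-L^{\delta,\theta}f(Z_t))\,\di t=f(z)$, and since $(L^{\delta,\theta},\rD_{\delta,\theta})$ generates the Feller semigroup with density \eqref{eq:trans-prob-skew-bessel}, the equation $\lambda f-L^{\delta,\theta}f=g$ is uniquely solvable in $\rD_{\delta,\theta}$ for every $g\in C_b(\bar\bR,\bR)$. This forces the resolvents of $Z$ and of the skew Bessel process to agree, so $Z$ has the one-dimensional marginals of $Z^{\delta,\theta}$; as $Z$ is assumed Markov, these marginals determine its full law. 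I expect the main obstacle to be the extension step for the skew part $\chi S_{\delta,\theta}$: the functions in (ii) are blind to the origin and $S_{\delta,\theta}$ is unbounded, so reconciling the local-martingale condition (iv) near $0$ with the martingale property away from $0$ via localization is the genuinely new difficulty, absent from the one-sided characterization in Theorem~\ref{t:bessel-mart-char}.
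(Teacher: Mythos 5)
Your proposal is correct and follows essentially the same route as the paper: necessity via the semigroup/scale-function properties of the skew Bessel process, and sufficiency by extending \emph{(ii)} to all of $\rD_{\delta,\theta}$ through the decomposition of $f$ into a multiple of $S_{\delta,\theta}$ (the coefficient being dictated by the transmission condition) plus a remainder $h$ with vanishing one-sided limits $\lim_{x\to 0\pm}|x|^{\delta-1}h'(x)=0$, treating $h$ by the one-sided approximation of Theorem~\ref{t:bessel-mart-char}, the scale part by \emph{(iv)}, and concluding with the resolvent identification. Your two deviations --- inserting the cutoff $\chi$ so that the subtracted scale part stays bounded, and the preliminary reduction showing $|Z|$ is a $\mathrm{BES}^\delta(|z|)$ --- are harmless technical variants of the paper's argument, which instead subtracts $a S_{\delta,\theta}$ globally, obtains $M^h$ and hence $M^f$ as local martingales, and upgrades $M^f$ to a true martingale by its boundedness on compact time intervals.
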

\begin{proof}
1.
Let assume first that $Z$ is a skew Bessel process of dimension $\delta\in (0,2)$
started at $z\in\bR$ with skewness parameter $\theta$.
Then \textit{(i)} and \textit{(iii)} is obvious, \textit{(iv)} follows from the Proposition 3.5 Chapter VII in \cite{RevuzYor05}.
The statement \textit{(ii)} holds due to standard results in semigroup theory, see Chapter VII,
Proposition 1.6 in \cite{RevuzYor05}.

2.
We proceed as in Theorem \ref{t:bessel-mart-char} 2., and first assume that $\theta\in (-1,1)$.
We have to show that for every $f \in D_{\delta,\theta}$ the process $M^f_t$ is a martingale.

Let $f \in D_{\delta,\theta}$. Without loss of generality we assume that $f(0)=0$. There is a finite limit
\ba
\lim_{x \to 0} \frac{f'(x)}{S'_{\delta,\theta}(x)}
=\lim_{x \uparrow 0} \frac{(1-\theta)f'(x)}{2(2-\delta)|x|^{1-\delta}}
=\lim_{x \downarrow 0} \frac{(1+\theta)f'(x)}{2(2-\delta)x^{1-\delta}}
=: a\in\bR.
\ea
Let $h(x): = f(x) - a S_{\delta,\theta}(x)$. Then we have that $L^{\delta,\theta} f(x) = L^{\delta,\theta} h(x)$, $x\in\bR$, $h(0)=0$, and
\ba
\lim_{x \downarrow 0} x^{\delta-1} h'(x)
&=\lim_{x \downarrow 0} x^{\delta-1} \Big( f'(x) - a \frac{2(2-\delta)}{1+\theta} x^{1-\delta} \Big)
=\lim_{x\downarrow 0} \frac{f'(x)}{x^{1-\delta}} -  \frac{2(2-\delta)a}{1+\theta}  =0,\\
\lim_{x\uparrow 0} |x|^{\delta-1} h'(x)
&= \lim_{x\uparrow 0} |x|^{\delta-1} \Big(f'(x) - a \frac{2(2-\delta)}{1-\theta}   |x|^{1-\delta}\Big)
=\lim_{x\uparrow 0} \frac{f'(x)}{|x|^{1-\delta}} -\frac{2(2-\delta) a }{1-\theta}  =0.
\ea
Literally repeating the steps \eqref{e:11}--\eqref{e:15} for the limits as $x\downarrow 0$ and  $x\uparrow 0$, we
get that $h'(0+)=h'(0-)=0$. Furthermore, as in \eqref{e:16} we get that
\ba
h''(0+)=h''(0-)=\frac{L^{\delta,\theta} f(0)}{\delta}.
\ea
Now, as in the argument of Theorem~\ref{t:bessel-mart-char}, we approximate $h$ by a sequence $\{h_n\}\subset C^2_c(\bR\backslash\{0\})$
and conclude that $M^h$ is a local martingale. Eventually, we note that
\ba
M^f_t= M^h_t + a S_{\delta,\theta}(Z_t)-aS_{\delta,\theta}(z)
\ea
is a local martingale, too. Since $(M^f_t)_{t\in[0,T]}$ is bounded for each $T\in[0,\infty)$, we get that $M^f$ is a martingale.

The one-sided cases $\theta=\pm 1$ are obtained as in Theorem~\ref{t:bessel-mart-char}.
\end{proof}

\section{Bessel and skew Bessel process as a time changed Brownian motion, $\delta\in(0,2)$\label{s:time}}

The following representations of the Bessel and skew Bessel process as a time changed Brownian motion
will be used in the sequel for construction of weak solutions of the heterogeneous diffusion equation \eqref{eq:main-simple}.

We start with Bessel processes. Let $\delta\in(0,2)$, and recall that $S_\delta(x)=x^{2-\delta}$,
$x\in[0,\infty)$, given in \eqref{e:Sdelta}
is a scale function of a Bessel process $Z^\delta$. Let
\ba
R_\delta(x)=x^{\frac{1}{2-\delta}},\quad x\in[0,\infty),
\ea
be its inverse function.
\begin{theorem}
\label{t:equiv-bessel}
Let $\delta \in (0, 2)$, $z \in [0,\infty)$ and let $\beta=(\beta_t)_{t\in[0,\infty)}$
be a standard Brownian motion started at $S_{\delta}(z)$. Let $r=(r_t)_{t\in[0,\infty)}$ be a random time change defined as
\ba
t = \int_0^{r_t} (R'_{\delta}(|\beta_s|))^2\,\di s.
\ea
Then, the process $Z^{\delta}:= (R_{\delta}(|\beta_{r_t}|))_{t\geq 0}$
is a Bessel process of dimension $\delta$.
\end{theorem}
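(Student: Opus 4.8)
The plan is to verify the three conditions of the martingale characterization in Theorem~\ref{t:bessel-mart-char} for the candidate process $Z^\delta_t=R_\delta(|\beta_{r_t}|)$, working with the time-changed filtration $\mathcal G_t=\mathcal F_{r_t}$, where $\mathcal F$ is the filtration of $\beta$. Condition i) is immediate: at $t=0$ we have $r_0=0$, so $Z^\delta_0=R_\delta(|\beta_0|)=R_\delta(S_\delta(z))=z$ because $S_\delta(z)\geq 0$.

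Before the core argument I would establish that the time change is well defined. Writing $A_t=\int_0^t (R'_\delta(|\beta_s|))^2\,\di s$, the map $r$ is the right inverse of $A$, i.e.\ $r_t=\inf\{s\colon A_s>t\}$, which is a stopping time since $A$ is continuous, adapted and increasing. By the occupation times formula, $A_t=\int_0^\infty (R'_\delta(a))^2 L^a_t(|\beta|)\,\di a$, and since $R'_\delta(a)=\tfrac{1}{2-\delta}a^{\frac{\delta-1}{2-\delta}}$, the integrand behaves like $a^{\frac{2(\delta-1)}{2-\delta}}$ near the origin, which is integrable precisely when $\delta>0$. Hence $A_t<\infty$ for all $t$; moreover $A$ is strictly increasing (its derivative is positive off the Lebesgue-null set $\{\beta_s=0\}$) and $A_t\to\infty$ by recurrence of $|\beta|$, so $r=A^{-1}$ is continuous and strictly increasing and $Z^\delta$ is continuous.

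The heart of the proof is the computation giving ii). Set $g:=f\circ R_\delta$ for $f\in C_c^2((0,\infty),\bR)$; since $f$ vanishes near $0$ and $R_\delta$ is smooth on $(0,\infty)$, the function $g$ is $C^2$ with $g'(0+)=0$. Combining Tanaka's formula $|\beta_t|=|\beta_0|+\int_0^t \sgn(\beta_s)\,\di\beta_s+L^0_t$ with It\^o's formula for $g(|\beta|)$, the local-time term equals $g'(0+)L^0_t=0$, leaving
\ban
g(|\beta_t|)=g(|\beta_0|)+\int_0^t g'(|\beta_s|)\sgn(\beta_s)\,\di\beta_s+\tfrac12\int_0^t g''(|\beta_s|)\,\di s.
\ean
Evaluating at $s=r_t$ and changing variables in the drift via $\di s=\di r_u$ with $r'_u=(R'_\delta(|\beta_{r_u}|))^{-2}$, I would invoke the key pointwise identity $\tfrac12 g''(y)(R'_\delta(y))^{-2}=L^\delta f(R_\delta(y))$, which reduces to the elementary relation $R''_\delta(y)/(R'_\delta(y))^2=(\delta-1)/R_\delta(y)$. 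This converts the drift into $\int_0^t L^\delta f(Z^\delta_u)\,\di u$, so that
\ban
M^f_t=f(Z^\delta_t)-f(z)-\int_0^t L^\delta f(Z^\delta_u)\,\di u=\int_0^{r_t} g'(|\beta_s|)\sgn(\beta_s)\,\di\beta_s.
\ean
The right-hand side is a Brownian stochastic integral evaluated at the stopping times $r_t$, hence a continuous local $\mathcal G_t$-martingale; since $f$ and $L^\delta f$ are bounded, $M^f$ is bounded on compact time intervals and therefore a genuine martingale (and, being $\sigma(Z^\delta_s,s\le t)$-adapted, also a martingale for the natural filtration of $Z^\delta$).

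Finally, for condition iii) I would change variables once more, $\int_0^\infty \mathbb I_{\{0\}}(Z^\delta_s)\,\di s=\int_0^\infty \mathbb I_{\{0\}}(\beta_u)(R'_\delta(|\beta_u|))^2\,\di u=0$, the last equality holding because the zero set of $\beta$ is Lebesgue-null. With i)--iii) in place, Theorem~\ref{t:bessel-mart-char} identifies $Z^\delta$ as a Bessel process of dimension $\delta$. I expect the main obstacle to be the well-definedness of the time change at the origin: since $R'_\delta$ blows up there for $\delta\in(0,1)$, one must control the occupation of $|\beta|$ near $0$ through its local time, and it is exactly the constraint $\delta>0$ that ensures integrability; the same care is what makes the martingale identity and the zero-time-at-zero property survive the time change.
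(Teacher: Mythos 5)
Your proof is correct and follows essentially the same route as the paper's: both verify the martingale characterization (Theorem \ref{t:bessel-mart-char}) by applying It\^o's formula to $g=f\circ R_\delta\circ|\cdot|$ (which is $C^2$ and vanishes near $0$, so the local-time term drops out), performing the time change, and invoking the identity $R''_\delta/(R'_\delta)^2=(\delta-1)/R_\delta$ to produce the generator $L^\delta$, with the zero-time-at-zero property obtained by the same change of variables. Your additional care about finiteness and strict monotonicity of the time change and about upgrading the local martingale to a true martingale via boundedness on compacts fills in details the paper leaves implicit, but it is not a different argument.
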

\begin{proof}
For $\delta \in (0, 2)$, $R'_{\delta}(x)=\mathcal O (|x|^{\frac{1}{2-\delta}-1})$ as $|x|\to 0$,
with $2(\frac{1}{2-\delta}-1)=2 \frac{\delta-1}{2-\delta}\in (-1,+\infty)$. Hence
$(R'_{\delta})^2\in L^1_{\text{loc}}(\bR_+,\bR)$ and the random time change
\ba
\tau_t = \int_0^{t} (R'_{\delta}(|\beta_s|))^2\,\di s
\ea
is a.s.\ finite for all $t\in[0,\infty)$.
Since $\beta$ spends zero time at 0, the random time change $t\mapsto \tau_t$ is
strictly increasing and invertible, so that $t\mapsto r_t$ is well defined and is strictly increasing.

We use the martingale characterization in Theorem \ref{t:bessel-mart-char}. Clearly, $Z^\delta$ has continuous paths and
starts at $z$.

Let $f \in C_c^2((0,\infty),\bR)$. Then $g(\cdot):=f\circ R_{\delta}\circ |\,\cdot\,|\in C_c^2(\mathbb{R} \backslash \{0\})$, and
the process
\ba
N^g_t = g(\beta_t) - g(\beta_0) - \frac12 \int_0^t g''(\beta_s)\,\di s
\ea
is a martingale w.r.t.\ to the filtration $(\rF_t)_{t\in[0,\infty)}$, so that the time changed process $(N^g_{r_t})_{t\in[0,\infty)}$ is a
martingale w.r.t.\ to the filtration $(\rF_{r_t})_{t\in[0,\infty)}$.

A simple calculation yields:
\ba
M^f_t&= f( R_{\delta}(|\beta_{r_t}|)) - f(z)
- \frac12 \int_0^{r_t}
f''( R_{\delta}(|\beta_s|)) (R'_{\delta}(|\beta_s|))^2
+ f'( R_{\delta}(|\beta_s|)) R''_{\delta}(|\beta_s|) \,\di s\\
&= f( R_{\delta}(|\beta_{r_t}|)) - f(z)
- \frac12 \int_0^t
f''( R_{\delta}(|\beta_{r_s}|))
+ f'( R_{\delta}(|\beta_{r_s}|)) R''_{\delta}(|\beta_{r_s}|) (R'_{\delta}(|\beta_{r_s}|))^{-2} \,\di s.
\ea
Taking into account that
\ba
R''_{\delta}(x) (R'_{\delta}(x))^{-2}=\frac{\delta-1}{R_{\delta}(x)},\quad x\in (0,\infty),
\ea
we get that the process
\ba
M^f_t &= f( Z^{\delta}_t) - f(z)
- \int_0^t
L^\delta f( Z^{\delta}_s)  \,\di s
\ea
is a $(\rF_{r_t})$-martingale.

Clearly, $Z^\delta$ spends zero time at zero, because $(\beta_{r_t})_{t\in[0,\infty)}$ does so.
\end{proof}

To get the similar result for the skew Bessel process, we introduce the inverse of $S_{\delta,\theta}$ defined in \eqref{eq:skew-scale}:
\ba
R_{\delta,\theta}(z)&=S_{\delta,\theta}^{-1}(z)
=
-\Big(\frac{1-\theta}{2}\Big)^{\frac{1}{2-\delta}}|z|^{\frac{1}{2-\delta}}\mathbb I_{(-\infty,0)}(z)
+\Big(\frac{1+\theta}{2}\Big)^{\frac{1}{2-\delta}} z ^{\frac{1}{2-\delta}}\mathbb I_{( 0,\infty)}(z).
\ea

\begin{theorem}
\label{t:equiv-skew-bessel}
Let $\delta \in (0, 2)$, $\theta \in (-1, 1)$, $z \in \mathbb{R}$ and let $\beta=(\beta_t)_{t\in[0,\infty)}$
be a standard Brownian motion started at $S_{\delta,\theta}(z)$. Let $r=(r_t)_{t\in[0,\infty)}$ be a random time change defined as
\ba
\label{e:rskew}
t = \int_0^{r_t} (R'_{\delta,\theta}(\beta_s))^2\,\di s.
\ea
Then, the process $Z^{\delta,\theta}:= (R_{\delta,\theta}(\beta_{r_t}))_{t\in[0,\infty)}$
is a skew Bessel process of dimension $\delta$ and with skewness parameter $\theta$ started at $z$.
\end{theorem}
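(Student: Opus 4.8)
The plan is to mirror the proof of Theorem~\ref{t:equiv-bessel}, replacing the martingale characterisation of Theorem~\ref{t:bessel-mart-char} by the semimartingale characterisation of skew Bessel processes in Theorem~\ref{t:sbessel-mart-char}, and replacing $R_\delta\circ|\cdot|$ by the sign-preserving map $R_{\delta,\theta}$. First I would check that the time change is well defined. Since $R_{\delta,\theta}(x)$ is a constant multiple of $\operatorname{sign}(x)|x|^{\kappa}$ with $\kappa=\frac{1}{2-\delta}$ on each half-line, one has $R'_{\delta,\theta}(x)=\mathcal O(|x|^{\kappa-1})$ as $|x|\to 0$, and $2(\kappa-1)=2\frac{\delta-1}{2-\delta}\in(-1,\infty)$; the multiplicative constants $\big(\tfrac{1\pm\theta}{2}\big)^{\kappa}$ do not affect this exponent, so $(R'_{\delta,\theta})^2\in L^1_{\text{loc}}(\bR,\bR)$ exactly as before. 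Because $\beta$ spends zero time at $0$, the clock $\tau_t=\int_0^t(R'_{\delta,\theta}(\beta_s))^2\,\di s$ is a.s.\ finite, strictly increasing and invertible, so $r$ is well defined and strictly increasing. Condition \textit{(i)} is immediate, since $Z^{\delta,\theta}_0=R_{\delta,\theta}(\beta_0)=R_{\delta,\theta}(S_{\delta,\theta}(z))=z$, and condition \textit{(iii)} follows because $t\mapsto\beta_{r_t}$ spends zero time at $0$ while $r$ is absolutely continuous and strictly increasing, so $Z^{\delta,\theta}=R_{\delta,\theta}(\beta_{r})$ does too.

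The genuinely new ingredient compared with Theorem~\ref{t:equiv-bessel} is condition \textit{(iv)}, but it comes for free: since $R_{\delta,\theta}=S_{\delta,\theta}^{-1}$, we have $S_{\delta,\theta}(Z^{\delta,\theta}_t)=\beta_{r_t}$, a time-changed Brownian motion and hence a continuous local martingale. Thus $S_{\delta,\theta}$ is a scale function of $Z^{\delta,\theta}$, which is precisely where the skewness parameter $\theta$ enters the construction.

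For condition \textit{(ii)}, take $f\in C^2_c(\bR\setminus\{0\})$ and set $g:=f\circ R_{\delta,\theta}$, which again lies in $C^2_c(\bR\setminus\{0\})$ and extends by $0$ across the origin since $f$ vanishes near $0$. By It\^o's formula $N^g_t=g(\beta_t)-g(\beta_0)-\frac12\int_0^t g''(\beta_s)\,\di s$ is a martingale, hence so is $(N^g_{r_t})$ for the time-changed filtration. Writing $g''=f''(R_{\delta,\theta})(R'_{\delta,\theta})^2+f'(R_{\delta,\theta})R''_{\delta,\theta}$ and substituting $\di r_s=(R'_{\delta,\theta}(\beta_{r_s}))^{-2}\,\di s$ gives
\ba
M^f_t=f(Z^{\delta,\theta}_t)-f(z)-\frac12\int_0^t\Big[f''(Z^{\delta,\theta}_s)+f'(Z^{\delta,\theta}_s)\,R''_{\delta,\theta}(\beta_{r_s})(R'_{\delta,\theta}(\beta_{r_s}))^{-2}\Big]\,\di s.
\ea
The computation then hinges on the identity
\ba
R''_{\delta,\theta}(x)\,(R'_{\delta,\theta}(x))^{-2}=\frac{\delta-1}{R_{\delta,\theta}(x)},\quad x\neq 0,
\ea
which I would verify directly on each half-line; the crucial point is that the skewness constants $\big(\tfrac{1\pm\theta}{2}\big)^{\kappa}$ cancel, leaving $\frac{\kappa-1}{\kappa}=\delta-1$, so the generator reduces to the standard Bessel form $\frac12 f''+\frac{\delta-1}{2x}f'=L^{\delta,\theta}f$ on $\bR\setminus\{0\}$. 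This turns $M^f_t$ into $f(Z^{\delta,\theta}_t)-f(z)-\int_0^t L^{\delta,\theta}f(Z^{\delta,\theta}_s)\,\di s$, a martingale, yielding \textit{(ii)}.

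I expect no serious obstacle, since the structure copies Theorem~\ref{t:equiv-bessel}. The one point requiring care is the cancellation of the $\theta$-dependent constants in the key identity: this is what guarantees that the drift on each half-line is exactly the Bessel drift and that \emph{all} of the $\theta$-dependence is absorbed into the scale function in \textit{(iv)}. One should also confirm that $g=f\circ R_{\delta,\theta}$ genuinely extends to a $C^2_c(\bR)$ function across the origin, which holds because $f$ has compact support bounded away from $0$, so that the It\^o expansion for $\beta$ carries no boundary contribution at $0$.
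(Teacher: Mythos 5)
Your proposal is correct and takes essentially the same approach as the paper: its proof likewise verifies conditions (i)--(iv) of Theorem~\ref{t:sbessel-mart-char} by composing $f\in C^2_c(\bR\setminus\{0\})$ with $R_{\delta,\theta}$, time-changing the resulting Brownian martingale $N^g$, and observing that $S_{\delta,\theta}(Z^{\delta,\theta})=\beta_{r}$ gives the scale-function condition. The only difference is that you spell out the well-definedness of the clock and the generator identity $R''_{\delta,\theta}(R'_{\delta,\theta})^{-2}=(\delta-1)/R_{\delta,\theta}$ with the cancellation of the $\theta$-dependent constants, details the paper leaves implicit by referring back to Theorem~\ref{t:equiv-bessel}.
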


\begin{proof}
The proof of this statement is repeats the proof of Theorem \ref{t:equiv-bessel}.
We check the conditions of the martingale characterization Theorem \ref{t:sbessel-mart-char}. Clearly, $Z$ has continuous paths and
starts at $z$. If $f \in C_c^2(\mathbb{R} \backslash \{0\})$, then
$g:=f\circ R_{\delta,\theta}\in C_c^2(\mathbb{R} \backslash \{0\})$, and
the process
\ba
N^g_t = g(\beta_t) - g(\beta_0) - \frac12 \int_0^t g''(\beta_s)\,\di s
\ea
is an $(\rF_t)$-martingale, and $(N^g_{r_t})_{t\in[0,\infty)}$ is an
$(\rF_{r_t})$-martingale, as well as the process $M^f$.
The process $Z^{\delta,\theta}$ spends zero time at $0$. Finally, the process
$S_{\delta,\theta}(Z^{\delta,\theta})=(\beta_{r_t})_{t\in[0,\infty)}$ is an $(\rF_{r_t})$-martingale.
\end{proof}

\section{Bessel and skew Bessel processes as solutions of SDEs\label{sec:sdes}}

The Bessel and skew Bessel process in Section \ref{sec:bessel-processes} turn out to be solutions of the SDEs
with unbounded or singular drift.

This connection is well know for the Bessel process, see \cite[Chapter XI]{RevuzYor05},
but it is a bit more subtle for the skew Bessel processes.

Two notions of a local time will be used in this section: the semi-martingale local times and a local time
with respect to a speed measure $m(\di a)$.

For a continuous semimartingale $X=(X_t)_{t\in[0,\infty)}$, the field of (symmetric) local times $L(X)=(L^a_t(X))_{a\in\bR,t\in[0,\infty)}$
is the a.s.\ unique random field for which the occupation times formula
\ba
\int_0^t \varphi(X_s)\, \di \langle X \rangle_s = \int_{\mathbb{R}} \varphi(a) L^a_t(X)\, \di a
\ea
holds true a.s.\ for each measurable non-negative function $\varphi$, see
\cite[Chapter IV]{RevuzYor05}. The symmetric semimartingale local times can be calculated as the a.s.\ limit
\ba
L^a_t(X)=\lim_{\e\downarrow 0}\frac{1}{2\e}\int_0^t \mathbb I_{[a-\e,a+\e]}(X_s)\,\di \langle X\rangle_s.
\ea
The mapping $a\mapsto L_t^a(X)$ is a.s.\ right-continuous. It is a.s.\ continuous is $X$ is a martingale.

On the other hand, a real valued Markov diffusion $X=(X_t)_{t\in[0,\infty)}$ with a speed measure $m(\di a)$ given,
a random field $\ell(X)=(\ell^a_t(X))_{a \in\bR,t\in[0,\infty)}$ is said to be a local
time with respect to $m$ if the following holds a.s.:
\ba
\int_0^t \varphi(X_s)\, \di s = \int_{\mathbb{R}} \varphi(a) \ell^a_t(X)\, m(\di a).
\ea
A local time $\ell(X)$ is unique up to null sets of the measure $m$.

The following results for the Bessel processes are generally known.
\begin{theorem}
\label{thm:bessel_to_sdes}
Let $Z=Z^\delta$ be a Bessel process of dimension $\delta\in(0,\infty)$ started at $z\in[0,\infty)$, and let $B$
be a standard Brownian motion.
\begin{enumerate}
\item
If $\delta \in(1,\infty)$ then $Z$ is the unique strong solution of the SDE
\ba
\label{eq:bsde}
Z_t = z + B_t + \int_0^t \frac{\delta-1}{2Z_s}\, \di s.
\ea
  \item
    If $\delta = 1$, then $Z$ is the unique strong solution of the SDE
    \begin{equation}
      \label{eq:bsde_reflected}
      Z_t = z + B_t + L^0_t(Z).
    \end{equation}
  \item
    If $\delta \in (0, 1)$, then $Z$ is the unique strong solution of the SDE
\ba
\label{eq:bsde_pv}
Z_t &= z + B_t +  \text{\emph{p.v.}} \int_0^t \frac{\delta - 1}{2Z_s}\, \di s\\
&= z + B_t +  \int_0^\infty \frac{\delta - 1}{2}(\ell^a_t(Z) - \ell^0_t(Z)) a^{2-\delta} \, \di a\\
&= z + B_t +  \int_0^\infty \frac{\delta - 1}{2a}  (\ell^a_t(Z) - \ell^0_t(Z))\, m_\delta(\di a),
\ea
where $\ell^Z_t(a)$ is a local time of $Z$ with respect to the speed measure $m_\delta(\di a)=\frac{2}{2-\delta}a^{\delta - 1}\, \di a$.
  \end{enumerate}
\end{theorem}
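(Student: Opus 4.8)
The plan is to treat the three regimes separately according to the sign of $\delta-1$, but to drive all of them with one common engine: the martingale characterization of Theorem~\ref{t:bessel-mart-char} together with Lévy's characterization of Brownian motion. In each case I would set $B_t:=Z_t-z-A_t$, where $A_t$ is the candidate drift, and prove that $B$ is a standard Brownian motion by showing that it is a continuous local martingale with $\langle B\rangle_t=t$. The quadratic variation is the easy common ingredient: applying the martingale characterization to functions approximating $x\mapsto x^2$ (for which $L^\delta(x^2)=\delta$) shows that $Z_t^2-z^2-\delta t$ is a local martingale, and comparing this with the It\^o/It\^o--Tanaka expansion of $Z^2$ forces $\langle Z\rangle_t=t$; since $\langle B\rangle=\langle Z\rangle$ this gives $\langle B\rangle_t=t$. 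The cases then differ only in how $A$ is produced and in the uniqueness argument.

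For part (i), $\delta\in(1,\infty)$, the regime $\delta\ge 2$ is immediate since $Z$ never reaches $0$, so the only real point is $\delta\in(1,2)$, where I would first show $\int_0^t Z_s^{-1}\,\di s<\infty$ a.s. By the occupation-times formula with the speed-measure local time, $\int_0^t Z_s^{-1}\,\di s=\frac{2}{2-\delta}\int_0^\infty \ell^a_t(Z)\,a^{\delta-2}\,\di a$, which converges because $a^{\delta-2}$ is integrable at $0$ precisely when $\delta>1$ and $\ell^\cdot_t$ is bounded with compact support; equivalently the semimartingale local time $L^0_t(Z)=0$, so no mass accumulates at the origin. Matching the martingale characterization against the decomposition $Z=z+B+A$ for $f\in C_c^2((0,\infty))$ and using $\langle Z\rangle_t=t$ yields $\int_0^t f'(Z_s)\,\di A_s=\int_0^t \frac{\delta-1}{2Z_s}f'(Z_s)\,\di s$, which identifies $\di A_s=\frac{\delta-1}{2Z_s}\,\di s$, and Lévy gives \eqref{eq:bsde}. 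Strong uniqueness holds up to the hitting time of $0$ because the drift is locally Lipschitz on $(0,\infty)$, and for $\delta\in(1,2)$ it is extended through the reflecting, zero-time visits to $0$ by a standard concatenation argument as in \cite[Ch.~XI]{RevuzYor05}. Part (ii), $\delta=1$, is classical: $|B|$ is a reflected Brownian motion and Tanaka's formula gives $|B|_t=|z|+\beta_t+L^0_t(|B|)$, so \eqref{eq:bsde_reflected} holds, while pathwise uniqueness is exactly Skorokhod's lemma for the reflection problem.

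The genuinely delicate case is part (iii), $\delta\in(0,1)$, and it is the main obstacle. Here $Z$ is \emph{not} a semimartingale: the occupation integral $\int_0^t Z_s^{-1}\,\di s$ diverges ($a^{\delta-2}$ is no longer integrable at $0$) and correspondingly $L^0_t(Z)=\frac{2}{2-\delta}\lim_{a\downarrow 0}a^{\delta-1}\ell^a_t(Z)=\infty$. The drift must therefore be read as a principal value, and the first task is to show that the compensated integral in \eqref{eq:bsde_pv}, $A_t=\int_0^\infty \frac{\delta-1}{2a}(\ell^a_t(Z)-\ell^0_t(Z))\,m_\delta(\di a)$, actually converges; this reduces to the regularity of $a\mapsto\ell^a_t(Z)$ at $0$, which makes $\ell^a_t-\ell^0_t=O(a)$ and renders the singular factor integrable. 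The harder task is to prove that $B_t=Z_t-z-A_t$ is Brownian. I would obtain the decomposition from the time-change representation of Theorem~\ref{t:equiv-bessel}: writing $S_\delta(Z_t)=|\beta_{r_t}|$ and applying the It\^o--Tanaka formula to $Z=R_\delta(|\beta_{r}|)$, the boundary local time $L^0(\beta)$ of the reflected Brownian motion interacts with the singular derivative $R_\delta'(0+)=\infty$; this interaction is precisely what produces the principal-value drift, and controlling it, rather than a naive It\^o expansion, is the crux. An alternative is to regularize via $A^\e_t=\int_0^t \frac{\delta-1}{2Z_s}\mathbb I_{\{Z_s>\e\}}\,\di s$, show the corresponding remainder converges as $\e\downarrow 0$ through the local-time representation, and identify the limit as Brownian via Lévy.

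Finally, uniqueness in case (iii) cannot rely on Lipschitz estimates and is the subtlest point. I would deduce it from the bijection furnished by the scale function and the time change: any solution has scale process $S_\delta(Z)$ equal to a time-changed reflected Brownian motion with the prescribed speed measure $m_\delta$, so its law is pinned down, and strong uniqueness follows from the uniqueness of the reflection problem, consistent with the Engelbert--Schmidt theory of such singular equations in \cite{CheEng05}. Throughout, the only truly new work beyond citing the Bessel facts of \cite[Ch.~XI]{RevuzYor05} is the careful treatment of the principal value at the origin in part (iii); the remaining steps are bookkeeping around the martingale characterization and Lévy's theorem.
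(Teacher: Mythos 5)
Your existence argument for part (iii) follows essentially the same route as the paper: represent $Z_t=R_\delta(|\beta_{r_t}|)$ via the time change of Theorem~\ref{t:equiv-bessel}, regularize the singular transformation near the origin (the paper truncates $R_\delta'$ at level $\e$ and applies the It\^o--Tanaka--Meyer formula to the resulting $g_\e$, which is your ``alternative'' regularization route), pass to the limit using the local-time identities, and identify the martingale part as a Brownian motion via the time-change formula; parts (i) and (ii) are classical, as you say. Two technical slips, though. First, the convergence of the compensated integral in \eqref{eq:bsde_pv} does not come from $\ell^a_t(Z)-\ell^0_t(Z)=O(a)$: diffusion local time is not Lipschitz in the space variable. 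What is true, and sufficient, is the H\"older bound inherited from Brownian local time through the scale function, $|\ell^a_t(Z)-\ell^0_t(Z)|\leq C a^{(2-\delta)\gamma}$ for any $\gamma<1/2$; since $(1-\delta)/(2-\delta)<1/2$ exactly when $\delta>0$, one can choose $\gamma$ so that $a^{\delta-2}|\ell^a_t(Z)-\ell^0_t(Z)|$ is integrable at $0$. Second, in part (i) with $\delta\in(1,2)$, ``concatenation through the visits to $0$'' is not a valid pathwise-uniqueness argument, because the zero set of $Z$ has no isolated points; the standard argument uses instead that the drift $x\mapsto(\delta-1)/(2x)$ is decreasing (see \cite{cherny2000strong}, which is what the paper cites).

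The genuine gap is the strong uniqueness claim in part (iii). Your argument --- any solution has $S_\delta(Z)$ equal to a time-changed reflected Brownian motion with speed measure $m_\delta$, so its law is determined, and ``strong uniqueness follows from the uniqueness of the reflection problem'' --- establishes at most uniqueness in law. There is no Skorokhod reflection structure in \eqref{eq:bsde_pv} to invoke: for $\delta\in(0,1)$ the drift is a genuine principal value, $Z$ is not even a semimartingale, and no boundary local-time term appears whose uniqueness could be quoted. Moreover, uniqueness in law never implies pathwise uniqueness (Yamada--Watanabe goes in the opposite direction), and the Engelbert--Schmidt-type theory of \cite{CheEng05} likewise yields weak, not strong, statements. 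This is exactly the point at which the paper does not argue from scratch: strong existence and uniqueness for \eqref{eq:bsde_pv} is imported from Theorem 1 and Example 2 of \cite{AryasovaP-11}, and the paper proves only weak existence ``for completeness''. As written, your proposal leaves the words ``unique strong solution'' in part (iii) unproved at precisely their most delicate point; you would either need to reproduce an argument of Aryasova--Pilipenko type or cite that result.
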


\begin{proof}
The proofs of statements \emph{1.} and \emph{2.} can be found in \S 1 of Chapter XI in \cite{RevuzYor05}
and Theorem 3.2 in \cite{cherny2000strong},
and Section 3 in \cite{HShepp-81}, respectively.

\emph{3}. The existence of a weak solution
to the SDE \eqref{eq:bsde_pv}
can be found in \cite{RevuzYor05} in Exercise 1.26 of Chaper XI.
The existence and uniqueness of the strong solution follows from Theorem 1
and Example 2 of \cite{AryasovaP-11}.

For completeness, we give here the proof of the weak existence.
Let $\beta$ be a standard Brownian motion started at $S_\delta(z)$ defined in \eqref{e:Sdelta}.
The process $Z=(R_\delta(|\beta_{r_t}|))_{t\in[0,\infty)}$ is a Bessel process of dimension $\delta$
by Theorem \ref{t:equiv-bessel}, where $R_\delta(x)=x^{\frac{1}{2-\delta}}$, $x\in[0,\infty)$.
In the sequel we omit the subscript $\delta$ and denote $S:=S_\delta$ and $R:=R_\delta$.

Let $L^a(\beta)$ be the symmetric local time of the Brownian motion $\beta$ at $a\in\bR$.
By Eq.\ (1.17) in Chapter VI (p.~232) in \cite{RevuzYor05}, for $a\in (0,\infty)$ and $t\in[0,\infty)$
\ba
\label{e:La}
L^a(|\beta|)_t=L^a_t(\beta)  + L^{-a}_t(\beta)
\ea
and
\ba
\label{e:L0}
L^0_t(|\beta|)=2L^0_t(\beta).
\ea
The following transformation formula holds for $a\in(0,\infty)$:
\ba
L^{R(a)}_t(Z)  = R'(a) L^a_{r_t}(|\beta|),
\ea
i.e.,
\ba
L^{a^{1/(2-\delta)}}_t(Z)  = \frac{a^{(\delta-1)/(2-\delta)}}{2-\delta} L^a_{r_t}(|\beta|)
\ea
see (1.23) in Chapter VI in \cite{RevuzYor05}. Denoting $z:=a^{1/(2-\delta)}$, we see that
the following limit holds a.s.\
\ba
\lim_{z \downarrow 0} z^{1 - \delta} L^z_t(Z) = \frac{2}{2-\delta}L^0_{r_t}(\beta).
\ea
Let us define the random field
\ba
\ell_t^a(Z):=a^{1-\delta} L^Z_t(a)= \frac{1}{2-\delta}L^{S(a)}_{r_t}(|\beta|) ,
\ea
so that
\ba
L^{a}_{r_t}(|\beta|)&= (2-\delta)\ell_t^{R(a)}(Z),\\
\ell_t^0(Z)&=\frac{1}{2-\delta}L^{0}_{r_t}(|\beta|).
\ea
For $\e\in (0,1)$, consider a family of symmetric functions $g_\varepsilon$ that approximate
$g(x):=R(|x|) = |x|^{\frac{1}{2 - \delta}}$.
We define
\ba
g'_\e(x):=R'(\e\vee x),\quad x\in[0,\infty),
\ea
and set
\ba
g_\e(x)&:=\int_0^x g'_\e(y)\,\di y, \quad x\in[0,\infty),\\
g_\e(x)&:= g_\e(-x), \quad x\in(-\infty,0).
\ea
The function $g_\e$ has a derivative which is a function of bounded variation.
Applying the It\^o--Tanaka--Meyer formula to $g_\varepsilon(\beta_t)$ we obtain
\ba
g_\varepsilon(\beta_t)& = g_\varepsilon(\beta_0)
+ \int_0^t g'_\e(\beta_s)\,  \di \beta_s
+ \frac12 \int_{\mathbb{R}} L^a_t (\beta)\, g_\e''(\di a)\\
&=g_\varepsilon(\beta_0)
+ \int_0^t g'_\e(\beta_s)\,  \di \beta_s
+ \frac12 \int_{\mathbb{R}} L^a_t (\beta)\mathbb I(|a|>\e) g_\e''(a)\,\di a+ \frac12 (g'(\e) - g'(-\e)) L^0_t(\beta).
\ea
We rewrite the difference as follows:
\ba
g'(\e) - g'(-\e)=-\int_{\mathbb R} g''_\e(a)\mathbb I(|a|>\e)\,\di a,
\ea
and get the equality
\ba
\label{e:gg}
g_\varepsilon(\beta_t) = g_\varepsilon(\beta_0)
+ \int_0^t g'_\e(\beta_s)\,  \di \beta_s
+ \frac12 \int_{\mathbb{R}} g''(a)\mathbb I(|a|>\e) (L^a_t (\beta) -L^0_t (\beta))  \,  \di a.
\ea
Applying the time change $t\mapsto r_t$ to \eqref{e:gg} and adding and subtracting $\int_0^{r_t} g'(\beta_s)\,  \di \beta_s$ yields
\ba
g_\e(\beta_{r_t}) = g_\varepsilon(\beta_0)
& + \int_0^{r_t} g'(\beta_s)\,  \di \beta_s + \int_0^{r_t} (g'_\e(\beta_s)- g'(\beta_s))\,  \di \beta_s\\
&+ \frac12 \int_{\mathbb{R}} g''(a)\mathbb I(|a|>\e) (L^a_{r_t} (\beta) -L^0_{r_t} (\beta))  \,  \di a.
\ea
Therefore taking into account \eqref{e:La} and \eqref{e:L0} we get
\ba
\int_{\mathbb{R}} g''(a)\mathbb I(|a|>\e) (L^a_t (\beta) -L^0_t (\beta))  \,  \di a
&=\int_\e^\infty g''(a)\mathbb (L^a_t (\beta) + L^{-a}_t(\beta)-2L^0_t (\beta))  \,  \di a\\
&=\int_\e^\infty g''(a)\mathbb (L^a_t (|\beta|)-L^0_t (|\beta|))  \,  \di a.
\ea
Passing to the limit as $\e\downarrow 0$ yields
\ba
\int_0^\infty g''(a)  \Big(L^a_{r_t} (|\beta|)-L^0_{r_t} (|\beta|)\Big)  \,  \di a
&=\int_0^\infty (2-\delta) R''(a)\Big( \ell_t^{R (a)}(Z) -\ell_t^0 (Z)\Big)  \,  \di a\\
&=\int_0^\infty R''(S(a)) S'(a) \Big( \ell_t^a(Z) -\ell_t^0 (Z)\Big)  \,  \di a\\
&=(\delta-1)\int_0^\infty a^{\delta-2} \Big( \ell_t^a(Z) -\ell_t^0 (Z)\Big)  \,  \di a.
\ea
Since $g'\in L^2_{\text{loc}}(\bR,\bR)$ we have convergence
\ba
\int_0^{r_t} g'_\e(\beta_s)\,  \di \beta_s\to \int_0^{r_t} g'(\beta_s)\,  \di \beta_s
\ea
in probability.
By the time change formula \cite[Theorem 8.5.7]{oksendal2003stochastic} the process $B=(B_t)_{t\in[0,\infty)}$ given by
\begin{equation}
B_t:=\int_0^{r_t} g'(\beta_s)\, \di \beta_s
\end{equation}
is a Brownian motion adapted to $(\rF_{r_t})$, and we get the formula \eqref{eq:bsde_pv}.
\end{proof}

Now we formulate similar results for the skew Bessel process with $\delta\in (0,2)$ and $\theta\in(-1,1)$.

\begin{theorem}\label{thm:skew_bessel_to_sdes}
  Let $Z=Z^{\delta,\theta}$ be a skew Bessel process of dimension $\delta \in (0,2)$
  with skewness parameter $\theta \in (-1, 1)$.
\begin{enumerate}
  \item
If $\delta \in  (1, 2)$, then $Z$ is the unique weak solution of the SDE
\ba
Z_t &= z + B_t + \int_0^t \frac{\delta - 1}{2Z_s}\, \di s
\ea
whose symmetric semimartingale local times satisfy the balance condition
\ba
\label{eq:blei}
& (1 - \theta) \lim_{a \downarrow 0} a^{1 - \delta} L^a_t(Z) = (1 + \theta) \lim_{a \uparrow 0} |a|^{1 - \delta} L^a_t(Z)\quad a.s.
\ea

\item
If $\delta = 1$, then $Z$ is the unique strong solution of the SDE
    \begin{equation}
      \label{eq:bsde_skew_bm}
      Z_t = z + B_t + \theta L^0_t(Z),
    \end{equation}
where $L^0(Z)$ is the symmetric semimartingale local time of $Z$ at $0$.
  \item
If $\delta \in (0, 1)$, then $Z$ is a weak solution of the SDE
\ba
\label{eq:bsde-skew-pv}
Z_t & = z + B_t + \text{\emph{p.v.}} \int_0^t \frac{\delta - 1}{2 Z_s} \, \di s\\
&= z + B_t
+ \int_{\mathbb R}  \frac{\delta-1}{2a} \Big(\ell_t^a(Z) - \ell_t^0(Z)\Big)\,  m_{\delta,\theta}(\di a).
\ea
where $\ell(Z)$ is the family of local times w.r.t.\ the measure $m_{\delta,\theta}(\di a)$ defined in \eqref{e:mdt}.

  \end{enumerate}
\end{theorem}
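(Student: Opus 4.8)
The plan is to realise $Z$ through the time-change representation of Theorem~\ref{t:equiv-skew-bessel}, namely $Z_t = R_{\delta,\theta}(\beta_{r_t})$ where $\beta$ is a standard Brownian motion started at $S_{\delta,\theta}(z)$ and $r$ is the inverse of $\tau_t=\int_0^t(R'_{\delta,\theta}(\beta_s))^2\,\di s$, and then to read off the three SDEs by applying the It\^o--Tanaka--Meyer formula to (a mollified version of) $R_{\delta,\theta}(\beta_t)$ followed by the time change $t\mapsto r_t$, exactly as in the proof of Theorem~\ref{thm:bessel_to_sdes}. The algebraic engine is the identity
\ban
\frac{R''_{\delta,\theta}(x)}{(R'_{\delta,\theta}(x))^2}=\frac{\delta-1}{R_{\delta,\theta}(x)},\quad x\in\bR\setminus\{0\},
\ean
which one checks holds separately on each half-line and is insensitive to the skew constants $c_\pm=((1\pm\theta)/2)^{1/(2-\delta)}$ appearing in $R_{\delta,\theta}$. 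Consequently the drift produced by the change of variables is always $\frac{\delta-1}{2Z_s}$, identical to the symmetric Bessel case; the skewness parameter $\theta$ enters only through the behaviour of $Z$ at the origin. After the time change the martingale part $\int_0^{r_t}R'_{\delta,\theta}(\beta_s)\,\di\beta_s$ is, by the time-change theorem for stochastic integrals, a Brownian motion $B$ adapted to $(\rF_{r_t})$.

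The three regimes are distinguished purely by the regularity of $R_{\delta,\theta}$ at $0$, since $R_{\delta,\theta}(x)\sim \pm c_\pm|x|^{1/(2-\delta)}$ there. For $\delta\in(1,2)$ the exponent $1/(2-\delta)>1$, so $R_{\delta,\theta}\in C^1$ with $R'_{\delta,\theta}(0)=0$ and with $R''_{\delta,\theta}$ integrable near $0$; It\^o--Tanaka--Meyer then produces no singular mass at the origin, and the ordinary drift integral $\int_0^t\frac{\delta-1}{2Z_s}\,\di s$ converges because the occupation formula together with the local-time scaling $L^a_t(Z)\sim|a|^{\delta-1}$ gives an integrand of order $|a|^{\delta-2}$, integrable exactly when $\delta>1$. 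For $\delta=1$ the map $R_{1,\theta}$ is piecewise linear with right and left slopes $\tfrac{1+\theta}{2}$ and $\tfrac{1-\theta}{2}$, so its derivative jumps by $\theta$ at $0$; It\^o--Tanaka--Meyer yields the additional term $\tfrac{\theta}{2}\,\di L^0_t(\beta)$, which after the time change becomes $\theta\,\di L^0_t(Z)$, i.e.\ the skew Brownian motion equation \eqref{eq:bsde_skew_bm}, and here I would simply invoke the classical existence and pathwise uniqueness for skew Brownian motion \citep{HShepp-81}. For $\delta\in(0,1)$ the exponent lies in $(\tfrac12,1)$, $R_{\delta,\theta}$ has a cusp and $R''_{\delta,\theta}$ is no longer integrable, so I would repeat the mollification argument of Theorem~\ref{thm:bessel_to_sdes}.3 with the asymmetric approximants $g_\e$, passing to the limit to obtain the principal-value representation \eqref{eq:bsde-skew-pv} with the speed-measure local times $\ell(Z)$ relative to $m_{\delta,\theta}$.

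The decisive new point is the balance condition \eqref{eq:blei}, which I would derive from the local-time transformation formula $L^{R_{\delta,\theta}(a)}_t(Z)=R'_{\delta,\theta}(a)\,L^a_{r_t}(\beta)$ (see \cite[Chapter~VI]{RevuzYor05}). Writing $z=R_{\delta,\theta}(a)$ and using $R_{\delta,\theta}(a)=c_+a^{1/(2-\delta)}$ for $a>0$, a direct computation shows that the power $z^{1-\delta}$ cancels all $a$-dependence and leaves
\ban
\lim_{a\downarrow0}a^{1-\delta}L^a_t(Z)&=\frac{1+\theta}{2(2-\delta)}\,L^0_{r_t}(\beta),\\
\lim_{a\uparrow0}|a|^{1-\delta}L^a_t(Z)&=\frac{1-\theta}{2(2-\delta)}\,L^0_{r_t}(\beta),
\ean
where I use that the symmetric local time of the martingale $\beta$ is jointly continuous, hence two-sided at $a=0$. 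Multiplying the first limit by $(1-\theta)$ and the second by $(1+\theta)$ gives the common value $\tfrac{1-\theta^2}{2(2-\delta)}L^0_{r_t}(\beta)$, establishing \eqref{eq:blei}. Thus the balance condition is exactly the image, under $R_{\delta,\theta}$, of the symmetry of Brownian local time, and it encodes the Feller boundary condition at $0$ that fixes $\theta$.

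It remains to address uniqueness. For $\delta=1$ this is the classical uniqueness for skew Brownian motion. For $\delta\in(1,2)$ the claim is weak uniqueness within the class of solutions satisfying \eqref{eq:blei}; here I would argue that \eqref{eq:blei} is precisely the condition forcing $S_{\delta,\theta}$ to be a scale function of $Z$, so that any such solution meets all hypotheses of the semimartingale characterization Theorem~\ref{t:sbessel-mart-char} and therefore carries the skew Bessel law (alternatively invoking \cite[Remark~2.30]{Blei12}). For $\delta\in(0,1)$ only existence is asserted, so nothing further is needed. I expect the main obstacle to be the uniqueness in the regime $\delta\in(1,2)$: one must show that the single scalar balance condition \eqref{eq:blei} pins down the skewness and, together with the SDE, the full law --- equivalently, that it is equivalent to $Z$ being in natural scale with respect to $S_{\delta,\theta}$ rather than some other skew scale. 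Verifying this equivalence, and the convergence of the principal-value integral with the asymmetric constants in the $\delta\in(0,1)$ case, are the two places where the computation is genuinely more delicate than in the symmetric Theorem~\ref{thm:bessel_to_sdes}.
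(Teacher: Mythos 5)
Your proposal is correct in substance and, for the only part the paper actually proves in detail --- case 3, $\delta\in(0,1)$ --- it follows exactly the paper's route: represent $Z_t=R_{\delta,\theta}(\beta_{r_t})$ via Theorem~\ref{t:equiv-skew-bessel}, mollify $R_{\delta,\theta}$ with the asymmetric approximants $g_\e$, apply It\^o--Tanaka--Meyer, pass to the limit, and convert the local times of $\beta$ into the speed-measure local times $\ell(Z)$, with $B_t=\int_0^{r_t}g'(\beta_s)\,\di\beta_s$ a Brownian motion by the time-change theorem. The genuine difference is in cases 1 and 2, which the paper disposes of by pure citation: statement 1 is attributed to Theorem 2.22 and Section 4 of \cite{Blei12}, statement 2 to Section 3 of \cite{HShepp-81}. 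You instead derive existence directly from the same time-change machinery: the identity $R''_{\delta,\theta}/(R'_{\delta,\theta})^2=(\delta-1)/R_{\delta,\theta}$ (correctly observed to be insensitive to the skew constants), local integrability of $R''_{\delta,\theta}$ near $0$ precisely when $\delta>1$, the jump of size $\theta$ in $R'_{1,\theta}$ producing the term $\theta L^0_t(Z)$, and the balance condition \eqref{eq:blei} as the image under the local-time transformation formula of the continuity of Brownian local time; this last computation coincides with the limit identity the paper establishes inside its case-3 proof. What your version buys is a self-contained, unified existence argument across all three regimes; what the paper's version buys is brevity and a rigorous uniqueness statement for free.

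The one place where your argument falls short of a proof is uniqueness for $\delta\in(1,2)$, which you flag yourself. The sketch ``\eqref{eq:blei} forces $S_{\delta,\theta}$ to be a scale function of $Z$, then apply Theorem~\ref{t:sbessel-mart-char}'' faces two obstacles: Theorem~\ref{t:sbessel-mart-char} is stated for Markov processes, and a weak solution of the SDE is not a priori Markov; and deducing that $S_{\delta,\theta}(Z)$ is a local martingale from the SDE together with \eqref{eq:blei} is not a direct application of It\^o--Tanaka, since $S'_{\delta,\theta}(x)\sim|x|^{1-\delta}$ blows up at $0$ for $\delta\in(1,2)$ (so $S'_{\delta,\theta}$ is not locally of bounded variation), and one needs an approximation argument with precise control of the local-time contributions --- which is exactly the content of \cite{Blei12}. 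Your fallback citation is therefore the right move, but note that the relevant result is Theorem 2.22 of \cite{Blei12} (weak existence and uniqueness under the balance condition), which is what the paper cites; Remark 2.30 of \cite{Blei12} concerns strong solutions for $\theta\in\{-1,0,1\}$ and does not give the uniqueness claimed in statement 1.
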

\begin{proof}
The statement \emph{1.} follows from Theorem 2.22 in \cite{Blei12} (existence and uniqueness of the SDE) and Section 4 in \cite{Blei12}
(identification of a solution as a skew Bessel process).

For \emph{2.} see Section 3 in \cite{HShepp-81}.

The proof of \emph{3.} goes along the lines of the proof of \emph{3.}
in Theorem \ref{thm:bessel_to_sdes}. Let $\beta$ be a standard Brownian motion
started at $S_{\delta,\theta}(z)$. Then, by Theorem \ref{t:equiv-skew-bessel}
the process $Z_t=R_{\delta,\theta}(\beta_{r_t})$ is a skew Bessel process.

For $z\neq 0$, the local times of $\beta$ and $Z$ are related as
\ba
L^{R(a)}_t(Z)  = R'(a) L^a_{r_t}(\beta)
\ea
(see (1.23) Chapter VI in \cite{RevuzYor05})
or, equivalently,
\ba
L^a_t(Z)  = R'(S(a)) L^{S(a)}_{r_t}(\beta)
\ea
Taking into account that
\ba
R'(S(a)) =
-\frac{1}{2-\delta}\frac{1-\theta}{2}|a|^{\delta-1}\mathbb I_{(-\infty,0)}(z)
+\frac{1}{2-\delta}\frac{1+\theta}{2}a^{\delta-1}\mathbb I_{[0,\infty)}(z)
\ea
and that the local time of $\beta$ is continuous, we see that the following limit holds a.s.\
\ba
\frac{2}{1-\theta}\lim_{a \uparrow 0} |a|^{1 - \delta} L^a_t(Z) =
\frac{2}{1+\theta}\lim_{a \downarrow 0} a^{1 - \delta} L^a_t(Z) =
\frac{1}{2-\delta}L^0_{r_t}(\beta).
\ea
For $\e\in (0,1)$, consider a family of functions $g_\varepsilon$ that approximate $g(x):=R(x)$.
We define
\ba
g'_\e(x):=\begin{cases}
   R '(\max\{\e, x\}),\quad x\in[0,\infty),\\
   R '(\min\{-\e, x\}),\quad x\in (-\infty,0),
          \end{cases}
\ea
and set
\ba
g_\e(x)&:=\int_0^x g'_\e(y)\,\di y, \quad x\in\bR.
\ea
The function $g_\e$ has a derivative which is a function of bounded variation.
Applying the It\^o--Tanaka--Meyer formula to $g_\varepsilon(\beta_t)$ we obtain
\ba
g_\varepsilon(\beta_t)& = g_\varepsilon(\beta_0)
+ \int_0^t g'_\e(\beta_s)\,  \di \beta_s
+ \frac12 \int_{\mathbb{R}} L^a_t (\beta)\, g_\e''(\di a)\\
&=g_\varepsilon(\beta_0)
+ \int_0^t g'_\e(\beta_s)\,  \di \beta_s
+ \frac12 \int_{\mathbb{R}} L^a_t (\beta)\mathbb I(|a|>\e) g''(a)\,\di a+ \frac12 (g'(\e) - g'(-\e)) L^0_t(\beta).
\ea
Rewriting
\ba
g'(\e) - g'(-\e)=-\int_{\mathbb R} g''_\e(a)\mathbb I(|a|>\e)\,\di a
\ea
we get
\ba
g_\varepsilon(\beta_t) = g_\varepsilon(\beta_0)
+ \int_0^t g'_\e(\beta_s)\,  \di \beta_s
+ \frac12 \int_{\mathbb{R}} g''(a)\mathbb I(|a|>\e) (L^a_t (\beta) -L^0_t (\beta))  \,  \di a.
\ea
We apply the time change $r_t$
and define the random field
\ba
\ell_t^a(Z):=\begin{cases}
   \displaystyle           \frac{2}{1+\theta} a^{1-\delta} L^a_t(Z)= \frac{1}{2-\delta}L^{S(a)}_{r_t}(\beta) , &a\in (0,\infty),\\
    \displaystyle           \frac{2}{1-\theta} |a|^{1-\delta} L^a_t(Z)= \frac{1}{2-\delta}L^{S(a)}_{r_t}(\beta) , &a\in (-\infty,0),\\
    \displaystyle           \frac{1}{2-\delta}L^{0}_{r_t}(\beta) , &a=0.
             \end{cases}
\ea
Then we have
\ba
\int_{\mathbb R}  g''(a)  \Big(L^a_{r_t} (\beta)-L^0_{r_t} (\beta)\Big)  \,  \di a
&=(2-\delta)\int_{\mathbb R}   R'' (a)\Big( \ell_t^{R (a)}(Z) -\ell_t^0(Z)\Big)  \,  \di a\\
&=(2-\delta)\int_{\mathbb R}
R'' (S (a)) S' (a) \Big( \ell_t^a(Z) -\ell_t^0 (Z)\Big)  \,  \di a\\
&=\frac{\delta-1}{2}\int_{0}^\infty \frac{1+\theta}{2-\delta}a^{\delta-2} \Big( \ell_t^a(Z) -\ell_t^0 (Z)\Big)  \,  \di a\\
&-\frac{\delta-1}{2}\int_{-\infty}^0 \frac{1-\theta}{2-\delta}|a|^{\delta-2} \Big( \ell_t^{a}(Z) -\ell_t^0 (Z)\Big)  \,  \di a\\
&=\frac{\delta-1}{2}\int_{\mathbb R} \frac{1}{a} \Big( \ell_t^a(Z) -\ell_t^0 (Z)\Big) m(a) \,  \di a,
\ea
with $m(a)$ defined in \eqref{e:mdt}.
Since $g'\in L^2_{\text{loc}}(\bR,\bR)$ we have
\ba
\int_0^{r_t} g'_\e(\beta_s)\,  \di \beta_s\to \int_0^{r_t} g'(\beta_s)\,  \di \beta_s
\ea
in probability.
By the time change formula \cite[Theorem 8.5.7]{oksendal2003stochastic} the process $B=(B_t)_{t\geq 0}$ given by
\begin{equation}
B_t:=\int_0^{r_t} g'(\beta_s)\, \di \beta_s
\end{equation}
is a Brownian motion adapted to the filtration $(\rF_{r_t})$.
\end{proof}

\section{Proof of Theorem \ref{thm:main_bessel}\label{s:proofs}}

\begin{lemma}
\label{lemma:skew-bessel-to-1}
Let $\alpha\in(0,1)$ and $\lambda\in [0,1]$ be such that
$\delta=\delta_{\alpha, \lambda} \in (0, 2)$. Let $x\in\mathbb R$ and $\theta\in(-1,1)$ and
let $Z^{\delta,\theta}$ be a skew Bessel process started at $z=H_\alpha(x) \in \mathbb{R}$.
Then, the process
$X = (H^{-1}_\alpha(Z^{\delta,\theta}_t(z)))_{t\in[0,\infty)}$ is a weak solution of \eqref{eq:main-simple}
started at $x$. The semimartingale local time of $X$ satisfies the following balance equation at zero:
\ba
\label{e:ltX}
(1 - \theta) \lim_{a \downarrow 0}  a^{-2\alpha\lambda} L^a_t(X) = (1 + \theta) \lim_{a \uparrow 0} |a|^{-2\alpha\lambda} L^a_t(X).
\ea
\end{lemma}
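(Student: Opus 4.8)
The plan is to write $X_t=\phi(Z_t)$ with $\phi:=H_\alpha^{-1}$ and to apply the It\^o--Tanaka--Meyer formula, feeding in the explicit semimartingale decomposition of the skew Bessel process $Z:=Z^{\delta,\theta}$ furnished by Theorem~\ref{thm:skew_bessel_to_sdes}. First I would record the analytic properties of $\phi$: a direct differentiation gives $\phi'(z)=((1-\alpha)|z|)^{\alpha/(1-\alpha)}=|\phi(z)|^\alpha$ and $\phi''(z)=\alpha((1-\alpha)|z|)^{(2\alpha-1)/(1-\alpha)}\operatorname{sign}(z)=\alpha|\phi(z)|^{2\alpha-1}\operatorname{sign}(\phi(z))$, so $\phi\in C^1(\bR)\cap C^2(\bR\setminus\{0\})$ with $\phi'(0)=0$; the derivative $\phi'$ is continuous, even, and monotone on each half-line, hence of locally bounded variation, and --- crucially for every $\alpha\in(0,1)$ --- the exponent $(2\alpha-1)/(1-\alpha)>-1$, so $\phi''(a)\,\di a$ is a locally finite absolutely continuous measure. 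Hence It\^o--Tanaka--Meyer applies and, writing $Z=z+B+A$ for the decomposition of Theorem~\ref{thm:skew_bessel_to_sdes} (with $B$ Brownian and $A$ its singular drift), gives $X_t=x+\int_0^t|X_s|^\alpha\,\di B_s+\int_0^t\phi'(Z_s)\,\di A_s+\tfrac12\int_{\bR}\phi''(a)L^a_t(Z)\,\di a$, where I already used $\phi'(Z_s)=|X_s|^\alpha$. The martingale term is thus exactly the diffusion term of \eqref{eq:main-simple}, and $\langle X\rangle_t=\int_0^t|X_s|^{2\alpha}\,\di s<\infty$ guarantees that this integral is well defined.

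The core computation is to show that the last two terms add up to the required drift $\alpha\lambda\int_0^t|X_s|^{2\alpha-1}\operatorname{sign}(X_s)\,\di s$. For $\delta\in(1,2)$ one has $\di A_s=\tfrac{\delta-1}{2Z_s}\,\di s$; rewriting both terms by the occupation-times formula for $Z$ (with $\di\langle Z\rangle_s=\di s$) turns their sum into $\int_{\bR}\big(\tfrac{\delta-1}{2a}\phi'(a)+\tfrac12\phi''(a)\big)L^a_t(Z)\,\di a$. Using $\phi'(a)/a=(1-\alpha)|\phi(a)|^{2\alpha-1}\operatorname{sign}(\phi(a))$ together with the formula for $\phi''$, the integrand collapses to
\ban
\frac{\delta-1}{2a}\phi'(a)+\tfrac12\phi''(a)
&=\tfrac12\big[(\delta-1)(1-\alpha)+\alpha\big]\,|\phi(a)|^{2\alpha-1}\operatorname{sign}(\phi(a))\\
&=\alpha\lambda\,|\phi(a)|^{2\alpha-1}\operatorname{sign}(\phi(a)),
\ean
where the last equality is the key algebraic identity $(\delta-1)(1-\alpha)+\alpha=2\alpha\lambda$ (equivalently $\delta-1=\alpha(2\lambda-1)/(1-\alpha)$). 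Reading the occupation-times formula backwards then returns precisely $\alpha\lambda\int_0^t|X_s|^{2\alpha-1}\operatorname{sign}(X_s)\,\di s$, so $X$ satisfies \eqref{eq:main-simple}.

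The boundary case $\delta=1$ forces $\lambda=\tfrac12$; there $\di A_s=\theta\,\di L^0_s(Z)$ is carried by $\{Z=0\}$ and is annihilated by $\phi'(0)=0$, so only the $\phi''$-term survives and the same identity (now with $\delta=1$) produces the coefficient $\tfrac\alpha2=\alpha\lambda$. The genuinely delicate regime is $\delta\in(0,1)$, where $A$ is the principal-value functional of Theorem~\ref{thm:skew_bessel_to_sdes}.3 and both $\phi''$ and the drift are singular at the origin; this is the step I expect to be the main obstacle. I would handle it exactly as in the proof of Theorem~\ref{thm:skew_bessel_to_sdes}.3, regularizing by $\phi'_\e:=\phi'(\e\smax|\cdot|)$, applying It\^o--Tanaka to $\phi_\e(Z)$ and passing to the limit $\e\downarrow0$ in the one-sided local-time limits. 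The cancellation displayed above shows that the two singular contributions combine into the single coefficient $\alpha\lambda|\phi(a)|^{2\alpha-1}\operatorname{sign}(\phi(a))$, and a short exponent count (pairing $|X|^{2\alpha-1}$ with $L^a(Z)\sim|a|^{\delta-1}$ near $0$ gives integrability since $2\alpha\lambda-1>-1$) shows that after the transformation the principal value disappears and the drift is a genuine $\di s$-integral. Thus $X$ is a weak solution in the sense of Definition~\ref{def:weak-sln-simple}.

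It remains to verify the balance \eqref{e:ltX}. For this I would invoke the transformation rule for semimartingale local times under the strictly increasing $C^1$ map $\phi$, namely $L^b_t(X)=\phi'(H_\alpha(b))\,L^{H_\alpha(b)}_t(Z)=|b|^\alpha L^{H_\alpha(b)}_t(Z)$. Setting $a=H_\alpha(b)$ and using the exponent identity $1-\delta=(\alpha-2\alpha\lambda)/(1-\alpha)$, one gets $|b|^{-2\alpha\lambda}L^b_t(X)=(1-\alpha)^{1-\delta}|a|^{1-\delta}L^a_t(Z)$, so the one-sided limits of $|b|^{-2\alpha\lambda}L^b_t(X)$ as $b\to0^{\pm}$ equal $(1-\alpha)^{1-\delta}$ times the corresponding one-sided limits of $|a|^{1-\delta}L^a_t(Z)$. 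Substituting into the balance of one-sided local times of the skew Bessel process at the origin (the form \eqref{eq:blei} in the regime $\delta\in(1,2)$, and its analogue in the remaining regimes, which follows from the scale function $S_{\delta,\theta}$ and the construction of $Z^{\delta,\theta}$) and cancelling the common factor $(1-\alpha)^{1-\delta}$ yields \eqref{e:ltX}.
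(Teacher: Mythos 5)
Your algebra is sound where it applies: the identity $(\delta-1)(1-\alpha)+\alpha=2\alpha\lambda$ is exactly the cancellation that produces the drift $\alpha\lambda|X|^{2\alpha-1}\sign(X)$, and your computation does constitute a valid proof in the regimes $\delta\in(1,2)$ and $\delta=1$ (where $\lambda\geq\tfrac12$). But there is a genuine gap in the regime $\delta\in(0,1)$, i.e.\ $\lambda<\tfrac12$, which you yourself flag as the main obstacle: there the skew Bessel process $Z$ is \emph{not a semimartingale}. The principal-value functional in Theorem \ref{thm:skew_bessel_to_sdes}.3 has infinite variation, so writing $Z=z+B+A$ is not a semimartingale decomposition; consequently the It\^o--Tanaka--Meyer formula cannot be applied to $\phi(Z)$, the occupation-times formula with the semimartingale local time $L^a_t(Z)$ is unavailable, and the transformation rule $L^b_t(X)=\phi'(H_\alpha(b))\,L^{H_\alpha(b)}_t(Z)$ used in your balance argument involves an object, $L^a_t(Z)$, that does not exist in this regime. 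Your proposed remedy --- ``regularize $\phi$ and apply It\^o--Tanaka to $\phi_\e(Z)$'' --- does not circumvent this: It\^o--Tanaka requires a semimartingale integrator no matter how smooth $\phi_\e$ is. It also misreads the proof of Theorem \ref{thm:skew_bessel_to_sdes}.3, where the regularization and It\^o--Tanaka are applied to the \emph{Brownian motion} $\beta$ of the time-change representation $Z_t=R_{\delta,\theta}(\beta_{r_t})$, never to $Z$ itself.

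The idea your proposal is missing, and the one the paper uses, is to compose the two singular maps \emph{before} doing any stochastic calculus. Writing $X_t=f(\beta_{r_t})$ with $f:=H_\alpha^{-1}\circ R_{\delta,\theta}$, the singularities cancel: $f$ is a two-sided power function with exponent $\frac{1}{(2-\delta)(1-\alpha)}=\frac{1}{1-2\alpha\lambda}>1$, so $f'$ is absolutely continuous and Krylov's generalized It\^o formula applies to $f(\beta)$ uniformly for \emph{all} $\delta\in(0,2)$; the time-change formula then yields both the stochastic integral $\int_0^t|X_s|^\alpha\,\di B_s$ and the Lebesgue drift $\alpha\lambda\int_0^t|X_s|^{2\alpha-1}\sign(X_s)\,\di s$, with no principal value ever appearing. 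Likewise, the paper obtains the balance \eqref{e:ltX} by transporting the jointly continuous Brownian local time of $\beta$ through this composite map, $L^x_t(X)=\frac{H_\alpha'(x)}{S_{\delta,\theta}'(H_\alpha(x))}\,L^{S_{\delta,\theta}(H_\alpha(x))}_{r_t}(\beta)$, which again makes sense for every $\delta\in(0,2)$, whereas your route through the one-sided limits of $|a|^{1-\delta}L^a_t(Z)$ (the condition \eqref{eq:blei}) is only meaningful for $\delta\in[1,2)$. To close the gap you would need to replace your direct use of the SDE for $Z$ by this time-changed Brownian representation, at which point your argument essentially becomes the paper's proof.
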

\begin{proof}
Let $\beta=(\beta_t)_{t\in[0,\infty)}$ be a standard Brownian motion started at $\beta_0=S_{\delta,\theta}(z)$.
By Theorem \ref{t:equiv-skew-bessel},  the process $Z^{\delta,\theta}:= (R_{\delta,\theta}(\beta_{r_t}))_{t\in[0,\infty)}$
is a skew Bessel process of dimension $\delta$ and skewness $\theta$, where $r$ is the random time change
defined in \eqref{e:rskew}.
Consider the function
\ba
f(x) := H^{-1}_\alpha(R_{\delta,\theta}(x))
=
\Big[\Big(\frac{1-\alpha}{(1-\theta)^{\frac{1}{2-\delta}}}\Big)^{\frac{1}{1-\alpha}}\mathbb I_{[0,\infty)}(x)
- \Big(\frac{1-\alpha}{(1+\theta)^{\frac{1}{2-\delta}}}\Big)^{\frac{1}{1-\alpha}}\mathbb I_{(-\infty,0)}(x)\Big]
|x|^{\frac{1}{(2-\delta)(1-\alpha)}}.
\ea
Since
\ba
  \frac{1}{(2-\delta)(1-\alpha)}= \frac{1}{1 - 2 \alpha \lambda}\in(1,\infty),
\ea
the derivative $f'$ is absolutely continuous, $f' \in \operatorname{AC}(\mathbb R,\mathbb{R})$,
so that Krylov's generalized It\^o formula
yields
\ba
f(\beta_t) = f(\beta_0) + \int_0^t f'(\beta_s)\, \di \beta_s  + \frac12 \int_0^t f''(\beta_s)\, \di s.
\ea
By the time change formula \cite[Theorem 8.5.7]{oksendal2003stochastic}, the process
\ba
\label{e:B1}
B_t : = \int_0^{r_t} R'_{\delta,\theta}(\beta_s)\,  \di \beta_s,\quad t\in[0,\infty),
\ea
is a Brownian motion adapted to the filtration $(\rF_{r_t})_{t\in[0,\infty)}$.
Therefore by the time change formula we get the equality
\ba
\label{e:BB}
\int_0^{r_t} f'(\beta_s)\, \di \beta_s
&=\int_0^t f'(\beta_{r_s}) r'_s \, \di \beta_{r_s}\\
&= \int_0^t f'(\beta_{r_s}) (R'_{\delta,\theta}(\beta_{r_s}))^{-2}  \, \di B_s\\
&= \int_0^t |f(\beta_{r_s})|^\alpha \, \di B_s.
\ea
The change of variables in the Lebesgue integral yields
\ba
\label{e:LL}
\frac12 \int_0^{r_t} f''(\beta_s)\, \di s &= \frac12 \int_0^t f''(\beta_{r_s}) r'_s \, \di s\\
&=\frac12 \int_0^t f''(\beta_{r_s}) (R'_{\delta,\theta}(\beta_{r_s}))^{-2} \, \di s\\
&=\alpha\lambda \int_0^{t} |f(\beta_{r_s})|^{2\alpha-1} \operatorname{sign}{\beta_{r_s}}\, \di s.
\ea
In other words, the processes $X_t=f(\beta_{r_t})=H^{-1}_\alpha(Z^{\delta,\theta}_t)$, $t\in[0,\infty)$, and $B$
defined in
\eqref{e:B1} solve the SDE
\ba
X_t = x + \int_0^t |X_s|^\alpha\, \di B_s  + \alpha\lambda \int_0^t |X_s|^{2 \alpha -1}\operatorname{sign}(X_s) \,\di s.
\ea
The semimartingale local time of $X$ is obtained from the local time of $\beta$:
\ba
L^X_{t}(x) = \frac{H'_\alpha(x)}{S_{\delta,\theta}'(H_\alpha(x))} L^\beta_{r_t}(S_{\delta,\theta}(H_\alpha(x))).
\ea
Observing that the local time $x\mapsto L^\beta_t(x)$ of $\beta$ is continuous, we get the balance equation
\eqref{e:ltX}
for $L^X$ at zero.
\end{proof}

Now we consider the cases $\theta=\pm1$. By symmetry, it is sufficient to consider the case $\theta=1$.

\begin{lemma}
\label{l:bessel-to-1}
Let $\alpha\in(0,2)$ and $\lambda\in [0,1]$ be such that
$\delta=\delta_{\alpha, \lambda} \in (0,\infty)$. Let $x\in[0,\infty)$ and
let $Z^{\delta}$ be a Bessel process started at $z=H_\alpha(x) \in [0,\infty)$.
Then, the process $X = (H^{-1}_\alpha(Z^{\delta}_t(z)))_{t \geq 0}$ is a non-negative weak solution of \eqref{eq:main-simple}
started at $x$.
\end{lemma}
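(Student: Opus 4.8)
The plan is to follow the argument of Lemma~\ref{lemma:skew-bessel-to-1}, adapted to the non-negative Bessel process, splitting according to whether $\delta\in(0,2)$ (where the time-change representation of Theorem~\ref{t:equiv-bessel} is available) or $\delta\in[2,\infty)$ (where the process stays strictly positive). Non-negativity of $X$ requires no work: $H^{-1}_\alpha$ maps $[0,\infty)$ into $[0,\infty)$ and $Z^\delta$ takes values in $[0,\infty)$, so $X_t\geq0$ for all $t$. Moreover $X_0=H^{-1}_\alpha(R_\delta(S_\delta(z)))=H^{-1}_\alpha(z)=x$, so the candidate solution starts at the correct point; it remains only to exhibit a driving Brownian motion $B$ and verify the integral equation \eqref{eq:main-simple}.

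For $\delta\in(0,2)$ I would let $\beta$ be a Brownian motion started at $S_\delta(z)=z^{2-\delta}\geq0$ and, by Theorem~\ref{t:equiv-bessel}, represent $Z^\delta_t=R_\delta(|\beta_{r_t}|)$ with the time change $r$ of that theorem. Setting $f(x):=H^{-1}_\alpha(R_\delta(|x|))=\big((1-\alpha)|x|^{1/(2-\delta)}\big)^{1/(1-\alpha)}$, we have $X_t=f(\beta_{r_t})$, and since the exponent equals $\tfrac{1}{(2-\delta)(1-\alpha)}=\tfrac{1}{1-2\alpha\lambda}\in(1,\infty)$, the function $f$ is even, non-negative and satisfies $f'\in\operatorname{AC}(\mathbb R)$. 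Krylov's generalized It\^o formula then applies to $f(\beta_t)$ exactly as in Lemma~\ref{lemma:skew-bessel-to-1}; after the time change, and with $B$ constructed from the stochastic integral of $R'_\delta(|\beta_{\cdot}|)$ against $\beta$, the martingale part collapses to $\int_0^t|X_s|^\alpha\,\di B_s$ and the bounded-variation part to $\alpha\lambda\int_0^t|X_s|^{2\alpha-1}\operatorname{sign}(X_s)\,\di s$, through the same pointwise identities relating $f'$, $f''$ and $R'_\delta$ used in \eqref{e:BB}--\eqref{e:LL}. This is precisely \eqref{eq:main-simple}.

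For $\delta\in[2,\infty)$ the origin is an entrance point, so $Z^\delta_t>0$ for every $t>0$ (and for every $t\geq0$ when $z>0$); in particular the set $\{t\colon Z_t=0\}$ has Lebesgue measure zero. Since $\delta>1$, the first part of Theorem~\ref{thm:bessel_to_sdes} gives $\di Z_t=\di B_t+\tfrac{\delta-1}{2Z_t}\,\di t$, and on $(0,\infty)$ the map $H^{-1}_\alpha$ is $C^2$. I would apply the ordinary It\^o formula to $X=H^{-1}_\alpha(Z)$ and invoke the elementary identities $H_\alpha'(x)=|x|^{-\alpha}$ and $(H^{-1}_\alpha)'(z)=|H^{-1}_\alpha(z)|^\alpha$ --- that is, the It\^o computation of the Introduction run in the opposite direction --- to see that the diffusion coefficient reduces to $|X|^\alpha$ and the drift to $\alpha\lambda|X|^{2\alpha-1}\operatorname{sign}(X)$, yielding \eqref{eq:main-simple} on $[0,\infty)$.

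The main obstacle lies in the regime $\delta\in(0,2)$, where $f''$ is singular at the origin. One must justify that Krylov's generalized It\^o formula is legitimate --- which is exactly why the $\operatorname{AC}$-regularity of $f'$, equivalently $\tfrac{1}{1-2\alpha\lambda}>1$, is recorded above --- and that the resulting Lebesgue integral $\tfrac12\int_0^{r_t}f''(\beta_s)\,\di s=\tfrac12\int_{\mathbb R}f''(a)L^a_{r_t}(\beta)\,\di a$ is a.s.\ finite, using $f''\in L^1_{\mathrm{loc}}$ (again a consequence of $\tfrac{1}{1-2\alpha\lambda}>1$) together with the boundedness and compact support of the Brownian local time $a\mapsto L^a_{r_t}(\beta)$. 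This secures the existence of the drift integral in the sense of Definition~\ref{def:weak-sln-simple}. The remaining pointwise identities are routine and entirely parallel to the skew case, so I expect no further difficulty once the $\operatorname{AC}$-regularity and local integrability are in place.
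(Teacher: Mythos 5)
Your case $\delta\in(0,2)$ is exactly the paper's argument: the time-change representation of Theorem \ref{t:equiv-bessel}, the function $f=H^{-1}_\alpha\circ R_\delta\circ|\cdot|$ with exponent $\tfrac{1}{(2-\delta)(1-\alpha)}=\tfrac{1}{1-2\alpha\lambda}$, Krylov's generalized It\^o formula, the Brownian motion $B_t=\int_0^{r_t}R'_\delta(|\beta_s|)\,\di\beta_s$, and the identities \eqref{e:BB}--\eqref{e:LL}; your additional justification of the finiteness of the drift integral via the occupation times formula is a harmless (indeed welcome) supplement to what the paper writes.

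The gap is in the case $\delta\in[2,\infty)$ at the initial point $x=0$, which the lemma explicitly allows. You apply ``the ordinary It\^o formula'' to $X=H^{-1}_\alpha(Z)$ on the strength of $H^{-1}_\alpha\in C^2((0,\infty))$ together with $Z_t>0$ for $t>0$. That is fine for $z>0$, where the path of $Z$ stays in $(0,\infty)$; but for $z=H_\alpha(0)=0$ the path starts at the origin, i.e.\ at the one point where you have not claimed any regularity, so It\^o's formula cannot be invoked from time $0$ as stated. To close this you need either an approximation argument (apply It\^o on $[\e,t]$ and let $\e\downarrow 0$, checking convergence of the stochastic and drift integrals) or --- this is what the paper does --- the observation that $\delta=\frac{1-2\alpha(1-\lambda)}{1-\alpha}\geq 2$ forces $\alpha\lambda\geq\tfrac12$, hence $\alpha\geq\tfrac12$, so that $H^{-1}_\alpha(z)=((1-\alpha)z)^{1/(1-\alpha)}$ has second derivative proportional to $z^{(2\alpha-1)/(1-\alpha)}$ with nonnegative exponent, i.e.\ $H^{-1}_\alpha\in C^2([0,\infty),\bR)$ up to and including the boundary; then It\^o's formula is legitimate for every $x\in[0,\infty)$. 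Note that the same inequality $2\alpha-1\geq 0$ is also what keeps the drift integrand $|X_s|^{2\alpha-1}$ bounded near zero, which your $\e\downarrow 0$ route would require as well. Apart from this missing observation, your proposal coincides with the paper's proof.
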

\begin{proof}
1. Let $\delta \in [2,\infty)$ and $Z^\delta$ be a Bessel process of dimension $\delta$ that satisfies the SDE
\eqref{eq:bsde}.
Note that in this case $\alpha\in [1/2,1)$, so that $H_\alpha^{-1} \in C^2([0,\infty),\bR)$.
Applying the It\^o formula
to $H_\alpha^{-1}(Z^\delta)$ yields the result.

\noindent
2. For $\delta \in (0, 2)$ we recall Theorem \ref{t:equiv-bessel}. Let $\beta$ be a Brownian motion started at $S_\delta(z)$. Then
$Z^\delta=R_\delta(|\beta_{r_t}|)$ is a Bessel process of dimension $\delta$.

Let $f(x) = H^{-1}_\alpha(R_\delta(|x|))$
and consider the process $X = (f(\beta_{r(t)}))_{t \geq 0}$.
Since $f' \in \operatorname{AC}([0, \infty))$ we conclude that
\ba
X_t = x + \int_0^{r(t)} f'(\beta_s)\, \di \beta_s + \frac12 \int_0^{r(t)} f''(\beta_s)\,  \di s
\ea
by the Krylov generalized It\^o formula.
The process
\ba
B_t = \int_0^{r(t)} R'_\delta(|\beta_s|)\, \di \beta_s
\ea
is a Brownian motion
adapted to the filtration $(\rF_{r_t})$. Repeating the formulae \eqref{e:BB} and \eqref{e:LL} yields the result.
\end{proof}

The last two lemmas allow constructing weak solutions of \eqref{eq:main-simple}
by transforming a Bessel or a skew Bessel process with the help of a non-linear mapping $H_\alpha^{-1}$.
We now show that all solutions of \eqref{eq:main-simple}
are of that form. We first deal with just non-negative solutions and then extend it
to all weak solutions after the identification of the scale function.

\begin{lemma}
\label{lemma:from-positive-1-to-bessel}
Let $\delta = \delta_{\alpha, \lambda}\in (0,\infty)$,
and let $(X, B)$ be a weak solution of \eqref{eq:main-simple} started at $x \in[0,\infty)$
such that $X$ is non-negative, time homogeneous strong Markov process spending zero time at $0$.
Then, the process $Z = (H_\alpha(X_t))_{t \geq 0}$ is a Bessel process of dimension $\delta$ started at $z = H_\alpha(x)$.
\end{lemma}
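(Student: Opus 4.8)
The plan is to verify the three conditions of the martingale characterization in Theorem~\ref{t:bessel-mart-char} for the process $Z=H_\alpha(X)$. Since $X$ is non-negative and $H_\alpha$ maps $[0,\infty)$ homeomorphically onto $[0,\infty)$ with $H_\alpha(0)=0$ and $H_\alpha(x)=\frac{1}{1-\alpha}x^{1-\alpha}$ for $x>0$, the process $Z$ is continuous, non-negative and satisfies $Z_0=H_\alpha(x)=z$, giving condition (i). Moreover $\{s\colon Z_s=0\}=\{s\colon X_s=0\}$, so $Z$ inherits the zero-time-at-zero property of $X$, which is condition (iii). The weak-solution definition guarantees that both integrals in \eqref{eq:main-simple} exist, so $X$ is a continuous semimartingale with $\di\langle X\rangle_s=|X_s|^{2\alpha}\,\di s$; this is what licenses the use of It\^o's formula below.

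The heart of the argument is condition (ii). I would fix $f\in C_c^2((0,\infty),\bR)$ and set $g:=f\circ H_\alpha$. Because $f$ vanishes on a neighbourhood of $0$ and $H_\alpha$ is smooth on $(0,\infty)$ and a homeomorphism fixing the origin, the support of $g$ is a compact subset of $(0,\infty)$, whence $g\in C_c^2((0,\infty),\bR)$ and $g$ extends by zero to a $C^2$ function on all of $\bR$. This is the key point: although $H_\alpha$ has an unbounded derivative at $0$, testing only against functions supported away from the origin makes the composition $g$ genuinely twice continuously differentiable, so no local-time or principal-value corrections appear. Applying the standard It\^o formula to $g(X_t)=f(Z_t)$ yields
\ban
f(Z_t)&=f(z)+\int_0^t g'(X_s)|X_s|^\alpha\,\di B_s\\
&\quad+\int_0^t\Big(\alpha\lambda\,g'(X_s)|X_s|^{2\alpha-1}+\tfrac12 g''(X_s)|X_s|^{2\alpha}\Big)\,\di s.
\ean

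It then remains to identify the finite-variation part with $\int_0^t L^\delta f(Z_s)\,\di s$, where $L^\delta$ is the generator \eqref{eq:bessel_op}. For $x>0$ write $z=H_\alpha(x)$ and use $g'(x)=f'(z)x^{-\alpha}$ and $g''(x)=f''(z)x^{-2\alpha}-\alpha f'(z)x^{-\alpha-1}$, together with $1/z=(1-\alpha)x^{\alpha-1}$, to obtain
\ban
\alpha\lambda\,g'(x)x^{2\alpha-1}+\tfrac12 g''(x)x^{2\alpha}
&=\tfrac12 f''(z)+\alpha\Big(\lambda-\tfrac12\Big)x^{\alpha-1}f'(z).
\ean
Comparing with $L^\delta f(z)=\tfrac12 f''(z)+\tfrac{\delta-1}{2z}f'(z)$ reduces everything to the single algebraic identity $\tfrac{\delta-1}{2}(1-\alpha)=\alpha(\lambda-\tfrac12)$, which is exactly the definition $\delta=\delta_{\alpha,\lambda}=\frac{1-2\alpha(1-\lambda)}{1-\alpha}$. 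On the set $\{X_s=0\}$ both the integrand and $L^\delta f(Z_s)$ vanish by compact support, so the drift term equals $\int_0^t L^\delta f(Z_s)\,\di s$ for all $t$. Since $g'(\cdot)\,|\cdot|^\alpha$ is bounded, the stochastic integral is a true martingale, so $M^f_t=f(Z_t)-f(z)-\int_0^t L^\delta f(Z_s)\,\di s$ is a martingale, establishing (ii). Theorem~\ref{t:bessel-mart-char} then yields that $Z$ is a Bessel process of dimension $\delta$ started at $z$.

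The only real obstacle is the singular behaviour of $H_\alpha$ at the origin, which would obstruct a direct application of It\^o's formula to $H_\alpha(X)$; the device of pulling back compactly supported test functions through $H_\alpha$ circumvents this entirely, after which the remaining work is the routine generator computation above. I do not expect to need the strong Markov or time-homogeneity hypotheses on $X$ in this lemma, since conditions (i)--(iii) already characterise $Z$; those properties become relevant only for the subsequent identification of the full two-sided solution.
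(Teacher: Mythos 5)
Your proposal is correct and follows essentially the same approach as the paper's own proof: both pull back test functions $f\in C_c^2((0,\infty),\bR)$ through $H_\alpha$, apply It\^o's formula to the composition, carry out the same generator computation (reducing to the identity $\tfrac{\delta-1}{2}(1-\alpha)=\alpha\big(\lambda-\tfrac12\big)$, i.e.\ the definition of $\delta_{\alpha,\lambda}$), and conclude via the martingale characterization of Theorem~\ref{t:bessel-mart-char}. Your closing remark that the strong Markov and time-homogeneity hypotheses are not actually needed at this step is consistent with the paper as well, since its proof of this lemma likewise uses only conditions \emph{i)}--\emph{iii)} of that theorem.
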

\begin{proof}
Let $f \in C^2_c((0, \infty),\bR)$, and set 
$h(x) := f(H_\alpha(x))$, so that $h \in C^2_c((0,\infty),\bR)$.
Moreover, $h$ satisfies for $x\in[0,\infty)$:
  \begin{align*}
    h'(x) &= f'(H_\alpha(x)) x^{-\alpha},\\
    h''(x) &= f''(H_\alpha(x)) x^{-2\alpha} - \alpha f'(H_\alpha(x)) x^{-\alpha - 1}.
  \end{align*}
The It\^o formula yields
\ba
\label{eq:4}
f(Z_t) = h(X_t)&
=h(x) + \int_0^t h'(X_s) X_s^\alpha\, \di B_s
+ \frac12 \int_0^t \Big( X_s^{2\alpha} h''(X_s) + 2 \lambda \alpha X_s^{2\alpha - 1} h'(X_s) \Big)\, \di s\\
&=f(z) + \int_0^t f'(Z_s) X_s^{-\alpha}  X_s^\alpha\,  \di B_s \\
&+ \frac12 \int_0^t  X_s^{2\alpha} \Big(f''(Z_s) X_s^{-2\alpha} - \alpha f'(Z_s) X_s^{-\alpha - 1} \Big)\, \di s
+\lambda \int_0^t \alpha X_s^{2\alpha - 1} f'(Z_s) X_s^{-\alpha}\, \di s\\
&=f(z) + \int_0^t f'(Z_s)\, \di B_s
+ \frac12 \int_0^t f''(Z_s) + (2 \lambda  - 1) \alpha  X_s^{\alpha - 1} f'(Z_s)\,  \di s\\
&=f(z) + \int_0^t f'(Z_s) \, \di B_s
+ \frac12 \int_0^t f''(Z_s) + \frac{(2 \lambda  - 1) \alpha}{(1 - \alpha)H(X_s)} f'(Z_s)\,  \di s\\
&=f(z) + \int_0^t f'(Z_s) \, \di B_s
+ \int_0^t\Big( \frac12 f''(Z_s) + \frac{(2 \lambda  - 1) \alpha}{(1 - \alpha)} \frac{f'(Z_s)}{2 Z_s}\Big)\,  \di s\\
&=f(z) + \int_0^t f'(Z_s) \, \di B_s + \int_0^t \Big(\frac12 f''(Z_s) + (\delta - 1) \frac{f'(Z_s)}{2 Z_s}\Big)\,  \di s.
\ea
Therefore the process 
\ba
M^f_t = f(Z_t) - f(z) - \int_0^t \Big(\frac12 f''(Z_s) + \frac{\delta - 1}{2Z_s} f'(Z_s)\Big)\,\di s
\ea
is a continuous martingale. Since the process $Z$ spends zero time at $0$,
the result follows from Theorem \ref{t:bessel-mart-char}.
\end{proof}

In order to obtain a similar result for two-sided weak solutions of \eqref{eq:main-simple} we have to
identify the scale function of the process $H_\alpha(X)$.

\begin{lemma}
\label{prop:from-bessel-to-skew-bessel}
Let $Y$ be a continuous time homogeneous strong Markov process started at $y \in \mathbb{R}$ 
such that $|Y|$ is a Bessel process of dimension $\delta \in (0, 2)$ started at $|y|$. Then, there is $\theta \in [-1, 1]$ 
such that $S_{\delta,\theta}$ is a scale function of $Y$, i.e., $S_{\delta,\theta}(Y)$ is a local martingale.
\end{lemma}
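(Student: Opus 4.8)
The plan is to reduce the statement to a one-dimensional computation of exit probabilities and to exploit the strong Markov property of $Y$ at its visits to $0$ in order to pin down a single skewness parameter. The starting observation is that away from the origin the candidate scale function is essentially forced: since $|Y|$ is a Bessel process of dimension $\delta$, on the event that $Y$ stays in $(0,\infty)$ it coincides with $\mathrm{BES}^\delta$, whose scale function is $S_\delta(x)=x^{2-\delta}$, and on $(-\infty,0)$ the process $-Y$ is again a Bessel process, with scale $|x|^{2-\delta}$. Hence $S_{\delta,\theta}$ is, up to the multiplicative constants $\tfrac{2}{1+\theta}$ and $\tfrac{2}{1-\theta}$, the only possible scale function on each half-line, and the entire content of the lemma is to determine the correct ratio of these constants, i.e.\ the gluing of the two branches at $0$.

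To determine this ratio I would work with first exit probabilities. For $a,b>0$ write $\tau_{(-a,b)}$ for the first exit time of $(-a,b)$; since $\delta\in(0,2)$ the process $|Y|$ is recurrent, so $\tau_{(-a,b)}<\infty$ a.s.\ and $0$ is reached a.s.\ from any starting point. Set $u_{a,b}(y)=\mathbf{P}_y(Y_{\tau_{(-a,b)}}=b)$ and $p=p(a,b):=u_{a,b}(0)$. Conditioning on whether $Y$ hits $b$ (resp.\ $-a$) before returning to $0$, and using the Bessel exit probability $\mathbf{P}_y(\tau_b<\tau_0)=y^{2-\delta}/b^{2-\delta}$ on $(0,b)$ together with its reflection on $(-a,0)$, I obtain the piecewise form $u_{a,b}(y)=p+(1-p)\,y^{2-\delta}/b^{2-\delta}$ for $y\in[0,b]$ and $u_{a,b}(y)=p\big(1-|y|^{2-\delta}/a^{2-\delta}\big)$ for $y\in[-a,0]$. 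The free parameter $p$ encodes exactly the behaviour of $Y$ at $0$.

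The key step --- and the main obstacle --- is to show that $p(a,b)$ is consistent with a single value of $\theta$, i.e.\ that $\rho(a,b):=\frac{p(a,b)}{1-p(a,b)}\cdot\frac{b^{2-\delta}}{a^{2-\delta}}$ does not depend on $a,b$. Here the strong Markov property does the real work. Fixing $0<b'<b$, I would apply the strong Markov property at the first exit time of $(-a,b')$: the process either leaves through $-a$ (and then exits $(-a,b)$ at the bottom) or reaches $b'$, from which the probability of a top exit of $(-a,b)$ is $u_{a,b}(b')$. This yields the identity $p(a,b)=p(a,b')\,u_{a,b}(b')$, and after substituting the explicit form of $u_{a,b}(b')$ and simplifying it reduces to $\frac{p(a,b)}{1-p(a,b)}=\frac{p(a,b')}{1-p(a,b')}\cdot\frac{(b')^{2-\delta}}{b^{2-\delta}}$, i.e.\ $\rho(a,b)=\rho(a,b')$. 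The symmetric argument, restarting at an intermediate negative level, shows independence of $a$, so $\rho(a,b)\equiv\rho\in[0,\infty]$ is constant. I then define $\theta$ by $\frac{1+\theta}{1-\theta}=\rho$, that is $\theta=\frac{\rho-1}{\rho+1}\in(-1,1)$ when $\rho\in(0,\infty)$.

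It remains to turn this into the local martingale statement. With $\theta$ so chosen, a direct check shows that $u_{a,b}(y)=\frac{S_{\delta,\theta}(y)-S_{\delta,\theta}(-a)}{S_{\delta,\theta}(b)-S_{\delta,\theta}(-a)}$ for all $y\in[-a,b]$. Since $u_{a,b}(Y_{t\wedge\tau_{(-a,b)}})$ is a bounded martingale --- being the closed martingale obtained by conditioning the indicator of $\{Y_{\tau_{(-a,b)}}=b\}$ and evaluating it through the Markov property --- its affine image $S_{\delta,\theta}(Y_{t\wedge\tau_{(-a,b)}})$ is a martingale for every $a,b$. Taking $a=b=n\to\infty$, the stopping times $\tau_{(-n,n)}$ increase to $\infty$ because $|Y|$ does not explode, and localizing along them shows that $S_{\delta,\theta}(Y)$ is a local martingale, as required. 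Finally, the degenerate cases $\rho=\infty$ and $\rho=0$ correspond to $p\equiv1$ and $p\equiv0$; these force $Y$ to remain non-negative (resp.\ non-positive) after reaching $0$, so $Y$ is a one-sided Bessel process and $\theta=\pm1$ with $S_{\delta,\pm1}(Y)=\pm|Y|^{2-\delta}$ the corresponding Bessel scale function, matching the one-sided processes constructed separately before Theorem~\ref{t:sbessel-mart-char}.
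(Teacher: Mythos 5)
Your proposal is correct and takes essentially the same route as the paper's proof: both pin down the skewness parameter by computing exit probabilities from intervals straddling the origin, using the strong Markov property at visits to $0$ together with the Bessel scale function $x^{2-\delta}$ on each half-line, and then identify $S_{\delta,\theta}$ through the affine form of those exit probabilities. The differences are only bookkeeping --- the paper normalizes via symmetric intervals (showing $p_+(\varepsilon)$ is constant and setting $\theta=2p_+-1$) where you show constancy of the odds ratio $\rho(a,b)$ and set $\tfrac{1+\theta}{1-\theta}=\rho$ --- plus your explicit closed-martingale and localization argument at the end, which the paper leaves implicit in the phrase ``therefore $S_{\delta,\theta}$ is a scale function of $Y$.''
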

\begin{proof}
Following the proof of \cite[Proposition 7.1]{PavShe-20},
for $a<b$
we define the first exit time
\ba
\tau_{(a, b)} = \inf\set{t \in[0,\infty) \colon Y_t \not\in (a, b)}.
\ea
First we show that the probability 
\ba
p_+(\varepsilon) = \P_0( Y_{\tau_{(-\varepsilon, \varepsilon)}} = \varepsilon)
\ea
does not depend on $\varepsilon$.
Let $0 < \varepsilon < \varepsilon'$, then
\ba
p_+(\e')
&= \P_\e(Y_{\tau_{(-\e', \e')}} = \e' ) p_+(\e)
+ \P_{-\e}(Y_{\tau_{(-\e', \e')}} = \e' ) (1 - p_+(\e)) .
\ea
Since $|Y|$ is a Bessel process of dimension $\delta \in (0,2)$, 
the function $S_{\delta}(x) = x^{2 - \delta}$, $x\in[0,\infty)$, is its scale function. Therefore
\ba
\P_{-\e}(Y_{\tau_{(-\e',0)}} = -\e') &= \P_\e(Y_{\tau_{(0, \e')}} = \e')
= \frac{S_\delta(\e) - S_\delta(0)}{S_\delta(\varepsilon') - S_\delta(0)}
= \Big(\frac{\e}{\e'}\Big)^{2 - \delta} ,\\
\P_{-\e}(Y_{\tau_{(-\e', 0)}} = 0)
&= \P_\e(Y_{\tau_{(0, \e')}} = 0 ) = \frac{S_\delta(\e') - S_\delta(\e)}{S_\delta(\e') - S_\delta(0)}
= 1 - \Big(\frac{\e}{\e'}\Big)^{2 - \delta} .
\ea
  Hence,
\ba
\P_\e(Y_{\tau_{(-\e', \e')}} = \e')
&= \P_\e(Y_{\tau_{(0, \e')}} = \e') + \P_\e(Y_{\tau_{(0, \e')}} = 0) p_+(\e') \\
&= \Big(\frac{\e}{\e'}\Big)^{2 - \delta} + \Big(1 - \Big(\frac{\e}{\e'}\Big)^{2 - \delta}\Big) p_+(\e'),
\ea
  and
\ba
\P_{-\e}(Y_{\tau_{(-\e', \e')}} = \e'  )
= \P_{-\e}(Y_{\tau_{(-\e', 0)}} = 0 ) p_+(\e')
= \Big(1 - \Big(\frac{\e}{\e'}\Big)^{2 - \delta}\Big) p_+(\e').
\ea
Finally, we get that
\ba
p_+(\e') = \Big[\Big(\frac{\e}{\e'}\Big)^{2 - \delta} + \Big(1 - \Big(\frac{\e}{\e'}\Big)^{2 - \delta}\Big) p_+(\e')\Big] p_+(\e)
+ \Big[\Big(1 - \Big(\frac{\e}{\e'}\Big)^{2 - \delta}\Big) p_+(\e')\Big] (1 - p_+(\e)),
\ea
so that
\ba
p_+(\e') = p_+(\e)=:p_+\in[0,1].
\ea
Let $\theta =2 p_+ - 1$
and $a < 0 < b$. Without loss of generality, let $|a|<b$. First assume that $x = 0$. Since $Y$ is a strong Markov process, we get
\ba
\P_0(\tau_b < \tau_a)
&=\P_0(Y_{\tau(a,b)}=b) = p_+ + (1-p_+)\P_{-b}(Y_{\tau(a,0)}=0)\P_0(Y_{\tau(a,b)}=b)\\
&=\P_0(Y_{\tau(a,b)}=b) = p_+
+ (1-p_+) \frac{S_\delta(|a|) - S_\delta(b)}{S_\delta(|a|) - S_\delta(0)} \P_0(Y_{\tau(a,b)}=b),
\ea
so that
\ba
\P_0(Y_{\tau(a,b)}=b)
=\frac{S_\delta(|a|) p_+}{S_\delta(b) p_- + S_\delta(|a|) p_+}
= \frac{S_{\delta,\theta}(0)-S_{\delta,\theta}(a)}{S_{\delta,\theta}(b) - S_{\delta,\theta}(a)}
\ea
with $S_{\delta,\theta}$ defined in \eqref{eq:skew-scale}.
For $x \in (0, b)$ we get
\ba
\P_x(\tau_b < \tau_a)= \P_x(Y_{\tau(a,b)}=b)
&=\P_x(Y_{\tau(b,0)}=b) + (1-  \P_x(Y_{\tau(b,0)}=b)    )  \P_0(Y_{\tau(a,b)}=b)\\
&=
\frac{S_\delta(x)- S_\delta(0) }{S_\delta(b)- S_\delta(0)}
+ \frac{S_\delta(b)- S_\delta(x) }{S_\delta(b)- S_\delta(0)}
\P_0(Y_{\tau(a,b)}=b),
\ea
so that
\ba
\P_x(\tau_b < \tau_a)
&=\frac{S_{\delta,\theta}(x)-S_{\delta,\theta}(a)}{S_{\delta,\theta}(b) - S_{\delta,\theta}(a)}.
\ea
Analogously,
for $x \in (a, 0)$
\ba
\P_x(\tau_b < \tau_a)  &=
\frac{S_{\delta,\theta}(x)-S_{\delta,\theta}(a)}{S_{\delta,\theta}(b) - S_{\delta,\theta}(a)}.
\ea
It is clear, that the same equalities hold true for $a=-b$,
$0<a<x<b$ and $a<x<b<0$.
Therefore, $S_{\delta,\theta}$ is a scale function of $Y$.
\end{proof}

We apply the previous result to identify solutions of \eqref{eq:main-simple} taking values on $\mathbb{R}$.

\begin{lemma}
Let $\alpha\in (0,1)$ and $\lambda\in[0,1]$ be such that $\delta=\delta_{\alpha,\lambda}\in (0,2)$, and let
$(X, B)$ be a weak solution of \eqref{eq:main-simple} started at $x \in \mathbb{R}$ such that
$X$ is a time-homogeneous strong Markov process spending zero time at $0$.
Let $Z = (H_\alpha(X_t))_{t \geq 0}$ started at $z = H_\alpha(x)$.
Then there is $\theta \in [-1, 1]$ such that $Z$ is a skew Bessel process of dimension $\delta$ started at
$z = H(x)$ with skewness parameter $\theta$.
\end{lemma}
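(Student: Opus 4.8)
The plan is to verify the four conditions of the semimartingale characterization Theorem~\ref{t:sbessel-mart-char} for the process $Z=H_\alpha(X)$. Since $H_\alpha$ is a homeomorphism of $\bR$ fixing the origin, $Z$ inherits continuity, time-homogeneity and the strong Markov property from $X$; moreover $Z_0=H_\alpha(x)=z$ and, because $H_\alpha^{-1}(\{0\})=\{0\}$, the process $Z$ spends zero time at $0$ whenever $X$ does. This immediately gives conditions (i) and (iii).

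For condition (ii) I would fix $f\in C^2_c(\bR\setminus\{0\})$ and set $h:=f\circ H_\alpha$, which again lies in $C^2_c(\bR\setminus\{0\})$ since $H_\alpha$ is a $C^2$ diffeomorphism away from the origin. Applying It\^o's formula to $h(X_t)$ and using $H_\alpha'(x)=|x|^{-\alpha}$ together with $|X_s|^{\alpha-1}\operatorname{sign}(X_s)=\tfrac{1}{(1-\alpha)Z_s}$ and the identity $\tfrac{(2\lambda-1)\alpha}{2(1-\alpha)}=\tfrac{\delta-1}{2}$, the computation of Lemma~\ref{lemma:from-positive-1-to-bessel} goes through verbatim on each sign, because on $\bR\setminus\{0\}$ the operator $L^{\delta,\theta}$ coincides with $\tfrac12 f''+\tfrac{\delta-1}{2x}f'$ for both positive and negative arguments. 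As $f'$ is bounded and compactly supported, the stochastic integral is a true martingale, so $M^f_t=f(Z_t)-f(z)-\int_0^t L^{\delta,\theta}f(Z_s)\,\di s$ is a martingale; this is condition (ii).

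To obtain condition (iv) I would first identify $|Z|$ as a Bessel process. Applying the martingale relation just established to an even test function $g(x)=\varphi(|x|)$ with $\varphi\in C^2_c((0,\infty),\bR)$, a direct check shows $L^{\delta,\theta}g(Z_s)=L^\delta\varphi(|Z_s|)$ irrespective of the sign of $Z_s$, so $\varphi(|Z_t|)-\varphi(|z|)-\int_0^t L^\delta\varphi(|Z_s|)\,\di s$ is a martingale. Together with $|Z_0|=|z|$ and the fact that $|Z|$ spends zero time at $0$, Theorem~\ref{t:bessel-mart-char} shows that $|Z|$ is a Bessel process of dimension $\delta$. Since $Z$ is a continuous time-homogeneous strong Markov process whose absolute value is a $\delta$-dimensional Bessel process, Lemma~\ref{prop:from-bessel-to-skew-bessel} produces a skewness parameter $\theta\in[-1,1]$ for which $S_{\delta,\theta}(Z)$ is a local martingale, which is condition (iv).

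With (i)--(iv) in hand, Theorem~\ref{t:sbessel-mart-char} yields that $Z$ is a skew Bessel process of dimension $\delta$ with skewness parameter $\theta$ started at $z$, completing the proof. The only genuinely delicate point is condition (iv): the dynamics of the SDE away from the origin determine $Z$ only up to its behaviour at $0$, and in particular the skewness $\theta$ cannot be read off from the It\^o computation. It must instead be extracted from the exit-probability and scale-function analysis of Lemma~\ref{prop:from-bessel-to-skew-bessel}, which is why both the identification of $|Z|$ as a Bessel process and the strong Markov property of $Z$ are essential.
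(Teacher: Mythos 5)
Your proposal is correct and follows essentially the same route as the paper: verify the four conditions of Theorem~\ref{t:sbessel-mart-char} by repeating the It\^o computation of Lemma~\ref{lemma:from-positive-1-to-bessel} for test functions in $C^2_c(\bR\setminus\{0\})$, identify $|Z|$ as a $\delta$-dimensional Bessel process, and invoke Lemma~\ref{prop:from-bessel-to-skew-bessel} to obtain $\theta$ and the scale-function condition (iv). Your use of even test functions $\varphi(|\cdot|)$ to establish that $|Z|$ is a Bessel process is simply a more explicit rendering of the paper's phrase ``arguing as in Lemma~\ref{lemma:from-positive-1-to-bessel}'', not a different method.
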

\begin{proof}
Arguing as in Lemma \ref{lemma:from-positive-1-to-bessel} we conclude that $|Z|$ is indeed a Bessel process of dimension
$\delta_{\alpha, \lambda}\in (0,2)$ started at $|z|$.
Lemma
\ref{prop:from-bessel-to-skew-bessel} implies that there is
$\theta \in [-1, 1]$ such that
$S_{\delta,\theta}$ is the scale function of $Z$.
Let $f \in C^2_c(\mathbb{R} \backslash \{0\})$, and define $h(x) := f(H_\alpha(x))$, so that
$h \in C^2_c(\mathbb{R} \backslash \{0\})$ as well. Eventually we repeat the argument of Lemma \ref{lemma:from-positive-1-to-bessel}
and use the fact that $S_{\delta,\theta}(Z)$ is a local martingale to identify $Z$ as a skew Bessel process
via the martingale characterization in Theorem \ref{t:sbessel-mart-char}.
\end{proof}

\begin{lemma}
Let $\alpha\in (0,1)$ and $\lambda\in[0,1]$ be such that $\delta=\delta_{\alpha,\lambda}\in [2,\infty)$, and let
$(X, B)$ be a weak solution of \eqref{eq:main-simple} started at $x \in \mathbb{R}$ such that
$X$ is a time-homogeneous strong Markov process spending zero time at $0$.
Let $Z = (H_\alpha(X_t))_{t \geq 0}$ started at $z = H_\alpha(x)$.

Then if $x \in(0,\infty)$,
then $Z$ is a Bessel process of dimension $\delta$ started at $z$ and if $x\in (-\infty,0)$,
then $-Z$ is a Bessel process of dimension $\delta$ started at $-z$. Eventually,
if $x = 0$ then there is $\theta\in[-1,1]$ such that
\ba
\Law(Z)=\frac{\theta+1}{2}\Law(\mathrm{BES}^\delta(0))+ \frac{\theta-1}{2}\Law(-\mathrm{BES}^\delta(0)).
\ea
\end{lemma}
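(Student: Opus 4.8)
The plan is to reduce everything to the one-sided Bessel statements already proved and then to exploit the fact that for $\delta\in[2,\infty)$ a Bessel process started away from the origin never returns to it. First I would establish that $|Z|$ is a Bessel process of dimension $\delta$ started at $|z|$, arguing exactly as in Lemma~\ref{lemma:from-positive-1-to-bessel}. Since $H_\alpha$ is odd we have $|Z|=H_\alpha(|X|)$; for an even test function $f\in C^2_c(\mathbb R\setminus\{0\})$ the function $h:=f\circ H_\alpha$ is again even and in $C^2_c(\mathbb R\setminus\{0\})$, and applying It\^o's formula to $h(X)$ and performing the same cancellations as in \eqref{eq:4} shows that $f(|Z_t|)-f(|z|)-\int_0^t L^\delta f(|Z_s|)\,\di s$ is a martingale. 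As $H_\alpha(0)=0$, the process $X$ spending zero time at $0$ forces $|Z|$ to do the same, so Theorem~\ref{t:bessel-mart-char} identifies $|Z|$ with $\mathrm{BES}^\delta(|z|)$.

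For $x\neq 0$ the result is then immediate. Since $\delta\geq 2$, the process $|Z|=\mathrm{BES}^\delta(|z|)$ with $|z|>0$ never hits $0$, so the continuous process $Z$ never crosses $0$ and therefore retains the sign of $z=H_\alpha(x)$ for all $t\in[0,\infty)$. Hence $Z=|Z|=\mathrm{BES}^\delta(z)$ if $x>0$, and $-Z=|Z|=\mathrm{BES}^\delta(-z)$ if $x<0$.

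It remains to treat $x=0$. Here $|Z|=\mathrm{BES}^\delta(0)$, which for $\delta\geq 2$ leaves the origin instantaneously and never returns. Consequently $Z$ can pass through $0$ only at $t=0$, and there is a random sign $\sigma\in\{+,-\}$ with $\operatorname{sign}(Z_t)=\sigma$ for all $t>0$; set $p_+:=\P(\sigma=+)$ and $\theta:=2p_+-1\in[-1,1]$. To pin down the conditional laws I would introduce $T_\e:=\inf\{t\colon|Z_t|=\e\}$. Because the sign is already frozen by time $0+$, we have $\{\sigma=+\}=\{Z_{T_\e}=\e\}\in\rF_{T_\e}$, so $p_+=\P(Z_{T_\e}=\e)$ for every $\e>0$. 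Applying the strong Markov property of $Z$ at $T_\e$ together with the identification of $Z$ from positive (resp.\ negative) starting points obtained above, the shifted path $(Z_{T_\e+t})_{t\geq 0}$ is distributed as $\mathrm{BES}^\delta(\e)$ on $\{\sigma=+\}$ and as $-\mathrm{BES}^\delta(\e)$ on $\{\sigma=-\}$. Letting $\e\downarrow 0$, using $T_\e\downarrow 0$ and the Feller continuity of the Bessel semigroup at the entrance point $0$, the conditional law of $Z$ given $\{\sigma=+\}$ converges to $\Law(\mathrm{BES}^\delta(0))$ and given $\{\sigma=-\}$ to $\Law(-\mathrm{BES}^\delta(0))$, which yields
\[
\Law(Z)=\frac{1+\theta}{2}\Law(\mathrm{BES}^\delta(0))+\frac{1-\theta}{2}\Law(-\mathrm{BES}^\delta(0)).
\]

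The main obstacle is precisely this last identification at $x=0$: it is not enough to know the sign distribution, one must also verify that conditioning on $\{\sigma=\pm\}$ does not distort the magnitude process. The no-return property for $\delta\geq 2$ is what allows the sign to be frozen at time $0+$ and read off from $Z_{T_\e}$, while the strong Markov property at $T_\e$ combined with the Feller property of the Bessel flow is what transfers the law of $\mathrm{BES}^\delta(\e)$ to that of $\mathrm{BES}^\delta(0)$ in the limit. Controlling this $\e\downarrow 0$ limit of conditional path laws rigorously is the delicate point.
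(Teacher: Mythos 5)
Your proposal is correct and follows the paper's own route: identify $|Z|$ as $\mathrm{BES}^\delta(|z|)$ by the computation of Lemma \ref{lemma:from-positive-1-to-bessel} (via even test functions), use the fact that for $\delta\in[2,\infty)$ the modulus never returns to zero to freeze the sign of $Z$, and set $\theta=2\,\P_0(Z_t>0,\ t\in(0,\infty))-1$. The only difference is one of detail: your stopping-time argument at $x=0$ (strong Markov property at $T_\e$ plus Feller continuity of the Bessel semigroup as $\e\downarrow 0$) rigorously justifies the mixture identity that the paper dismisses with ``clearly,'' and in doing so you also implicitly correct the paper's sign typo, writing the weight $\frac{1-\theta}{2}$ rather than $\frac{\theta-1}{2}$.
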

\begin{proof}
Arguing as in Lemma \ref{lemma:from-positive-1-to-bessel} we conclude that $|Z|$ is a Bessel process of dimension
$\delta \in [2,\infty)$, started at $|z|$.
Therefore, $\P_z(Z_t\neq 0,\ t\in(0,\infty))=1$ and
$Z$ is either a Bessel process or the negative of a Bessel process, if $z > 0$ or $z < 0$, respectively.

For $z=0$, we have
\ba
1=\P_0( |Z_t|\neq 0,\ t\in(0,\infty))= \P_0( Z_t> 0,\ t\in(0,\infty)) + \P_0( Z_t < 0,\ t\in(0,\infty)).
\ea
and clearly setting
\ba
\theta:=  2\P_0( Z_t> 0,\ t\in(0,\infty))-1  \in[-1,1].
\ea
finishes the proof.
\end{proof}


\end{document}